\newtheorem{thm}{Theorem}[section]
\newtheorem{lem}[thm]{Lemma}
\newtheorem{cor}[thm]{Corollary}
\newtheorem{prop}[thm]{Proposition}
\newtheorem{defn}{Definition}
\newtheorem{rem}[thm]{Remark}
\begin{document}

\bigskip\bigskip
\title[Chaotic Period Doubling] {Chaotic Period Doubling}

\author{V.V.M.S. Chandramouli, M. Martens, \\
W. de Melo, C.P. Tresser.}

\address{University of Groningen, The Netherlands}
\address{Suny at Stony Brook, USA}
\address{IMPA, Brazil}
\address{IBM, USA}

\date{September 18, 2007}
\begin{abstract} The period doubling renormalization operator was introduced by M. Feigenbaum and by P. Coullet and C.
Tresser in the nineteen-seventieth to study the asymptotic small
scale geometry of the attractor of one-dimensional systems which are
at the transition from simple to chaotic dynamics.  This geometry
turns out to not depend on the choice of the map under rather mild
smoothness conditions. The existence of a unique renormalization
fixed point which is also hyperbolic among generic smooth enough
maps plays a crucial role in the corresponding renormalization
theory. The uniqueness and hyperbolicity of the renormalization
fixed point were first shown in the holomorphic context, by means that 
generalize to other renormalization operators.  It was
then proved that in the space of $C^{2+\alpha}$ unimodal maps,
for $\alpha$ close to one, the period doubling renormalization fixed
point is hyperbolic as well.  In this paper we study what happens when one
approaches from below the minimal smoothness thresholds for the
uniqueness and for the hyperbolicity of the period doubling
renormalization generic fixed point.  Indeed, our main results
states that in the space of $C^2$ unimodal maps the analytic fixed
point is not hyperbolic and that the same remains true when adding
enough smoothness to get a priori bounds. In this smoother class,
called $C^{2+|\cdot|}$ the failure of hyperbolicity is tamer than in
$C^2$. Things get much
 worse with just a bit less of smoothness than $C^2$ as then even the 
uniqueness is lost and other asymptotic behavior become possible. We show
  that the period doubling renormalization operator acting on the space of 
$C^{1+Lip}$ unimodal maps has infinite topological entropy.
\end{abstract}

\maketitle
\thispagestyle{empty}
\def\IMSmarkvadjust{0 pt}
\def\IMSmarkhadjust{0 pt}
\def\IMSmarkhpadding{0 pt}
\def\IMSpubltext{Published in modified form:}
\def\SBIMSMark#1#2#3{
 \font\SBF=cmss10 at 10 true pt
 \font\SBI=cmssi10 at 10 true pt
 \setbox0=\hbox{\SBF \hbox to \IMSmarkhpadding{\relax}
                Stony Brook IMS Preprint \##1}
 \setbox2=\hbox to \wd0{\hfil \SBI #2}
 \setbox4=\hbox to \wd0{\hfil \SBI #3}
 \setbox6=\hbox to \wd0{\hss
             \vbox{\hsize=\wd0 \parskip=0pt \baselineskip=10 true pt
                   \copy0 \break%
                   \copy2 \break%
                   \copy4 \break}}
 \dimen0=\ht6   \advance\dimen0 by \vsize \advance\dimen0 by 8 true pt
                \advance\dimen0 by -\pagetotal
	        \advance\dimen0 by \IMSmarkvadjust
 \dimen2=\hsize \advance\dimen2 by .25 true in
	        \advance\dimen2 by \IMSmarkhadjust

%
%
  \openin2=publishd.tex
  \ifeof2\setbox0=\hbox to 0pt{}
  \else 
     \setbox0=\hbox to 3.1 true in{
                \vbox to \ht6{\hsize=3 true in \parskip=0pt  \noindent  
                {\SBI \IMSpubltext}\hfil\break
                \input publishd.tex 
                \vfill}}
  \fi
  \closein2
  \ht0=0pt \dp0=0pt
 \ht6=0pt \dp6=0pt
 \setbox8=\vbox to \dimen0{\vfill \hbox to \dimen2{\copy0 \hss \copy6}}
 \ht8=0pt \dp8=0pt \wd8=0pt
 \copy8
 \message{*** Stony Brook IMS Preprint #1, #2. #3 ***}
}

\SBIMSMark{2007/2}{September 2007}{}


\setcounter{tocdepth}{1}
\tableofcontents

\section{Introduction}

The period doubling renormalization operator was introduced by M.
Feigenbaum \cite{Fe}, \cite{Fe2} and by P. Coullet and C. Tresser
\cite{CT}, \cite{TC} to study the asymptotic small scale geometry of
the attractor of one-dimensional systems which are at the transition
from simple to chaotic dynamics.  In 1978, they published certain
rigidity properties of such systems, the small scale geometry of the
invariant Cantor set of generic smooth maps at the boundary of chaos
being independent of the particular map being considered. Coullet
and Tresser treated this phenomenon as similar to
\emph{universality} that has been observed in critical phenomena for
long and explained since the early seventieth by Kenneth Wilson
(see, \emph{e.g.,} \cite{Ma}).  In an attempt to explain
universality at the transition to chaos, both groups formulated the
following conjectures that are similar to what was conjectured in
statistical mechanics.

\medskip
\noindent Renormalization conjectures: \emph{In the proper class of
maps, the period doubling \emph{renormalization operator} has a
unique fixed point that is hyperbolic with a one-dimensional
unstable manifold and a codimension one stable manifold consisting
of the systems at the transition to chaos.}

\smallskip
These conjectures were extended to other types of dynamics on the
interval and on other manifolds but we will not be concerned here
with such generalizations. During the last 30 years many authors
have contributed to the development of a rigorous theory proving the
renormalization conjectures and explaining the phenomenology. The
ultimate goal may still be far since the universality class of
smooth maps at the boundary of chaos contains many sorts of
dynamical systems, including useful differential models of natural
phenomena and there even are predictions about natural phenomena in
\cite{CT}, which turned out to be experimentally corroborated. A
historical review of the mathematics that have been developed can be
found in \cite{FMP} so that we recall here only a few milestones
that will serve to better understand the contribution to the overall
picture brought by the present paper.

The type of differentiability of the systems under consideration has
a crucial influence on the actual small scale geometrical behavior
(like it is the case in the related problem of smooth conjugacy  of
circle diffeomorphisms to rotations: compare \cite{He} to \cite{KO}
and \cite{KS}). The first result dealt with holomorphic systems and
were first local \cite{La}, and later global \cite{Su}, \cite{McM},
\cite{Ly} (a progression similar to what had been seen in the
problem of smooth conjugacy to rotations: compare \cite{Ar} to
\cite{He} and \cite{Yo}). With global methods came also means to
consider other renormalizations.  Indeed, the hyperbolicity of the
unique renormalization fixed point has been shown in \cite{La} for
period doubling, and later in \cite{Ly} by means that generalize to
other sorts of dynamics.  Then it  was showed in \cite{Da} that the
renormalization fixed point is also hyperbolic in the space of
$C^{2+\alpha}$ unimodal maps with $\alpha$ close to one (using
\cite{La}), these results being later extended in \cite{FMP} to more
general types of renormalization  (using \cite{Ly}). As far as
existence of fixed points is concerned, a satisfactory theory could
be obtained some time ago, first for period doubling only and then
for maps with bounded combinatorics after several subclasses of
dynamics had been solved, see \cite{M} for the most general results,
assuming the lowest degree of smoothness and references to the prior
literature.

We are interested in exploring from below the limit of smoothness
that permits hyperbolicity of the fixed point of renormalization.
Our main result concern a new smoothness class, $C^{2+|\cdot|}$,
which is bigger than $C^{2+ \alpha}$ for any positive $\alpha\leq
1$, and is in fact wider than $C^{2}$ in ways that are rather
technical as we shall describe later (this is the bigger class where
the usual method to get a priori bounds for the geometry of the
Cantor set work).  We are interested here in the part of
hyperbolicity that consists in the attraction in the stable manifold
made of infinitely renomalizable maps.
We show that in the space of $C^{2+|\cdot|}$ unimodal maps the
analytic fixed point is not hyperbolic for the action of the period
doubling renormalization operator. We also show that nevertheless,
the renormalization converges to the analytic generic fixed point
(here generic means that the second derivative at the critical point
is not zero), proving it to be globally unique, a uniqueness that
was formerly known in classes smaller than $C^{2+|\cdot|}$
(hence assuming more smoothness). The convergence might only be
polynomial as a concrete sign of non-hyperbolicity. The failure of
hyperbolicity happens in a more serious way in the space of $C^2$
unimodal maps since there the convergence can be arbitrarily slow.
The uniqueness of the fixed point in this case, remains an open question. 
The uniqueness was known to be wrong in a serious way among $C^{1+Lip}$
unimodal maps since a continuum of fixed points of renormalization
could be produced \cite{Tr}. Here we show that the period doubling
renormalization operator acting on the space of $C^{1+Lip}$ unimodal
maps has infinite topological entropy.

After this informal discussion of what will be done here and how it 
relates to universality theory, we now give some definitions, which allows 
us next to turn to the precise formulation of our main results.

\bigskip

A {\it unimodal} map $f:[0,1]\to [0,1]$ is a $C^1$ mapping with the
following properties.
\begin{itemize}
\item $f(1)=0$, \item there is a unique point $c\in (0,1)$, the
{\it critical point}, where $Df(c)=0$, \item $f(c)=1$.
\end{itemize}
A map is a $C^r$ unimodal maps if $f$ is $C^r$. We will
concentrate on unimodal maps of the type $C^{1+Lip}$, $C^2$, and
$C^{2+|\cdot|}$. This last type of differentiability will be introduced
in $\S~\ref{c2abs}.$

The critical point $c$ of a $C^2$ unimodal map $f$ is called {\it
non-flat} if $D^2f(c)\ne 0$. A critical point $c$ of a unimodal
map $f$ is a {\it quadratic tip} if there exists a sequence of
points $x_n \to c$ and constant $A >0$ such that
$$ \lim_{n \to \infty} \frac{f(x_n)- f(c)}{(x_n-c)^2} = -A .$$
The set of $C^r$ unimodal maps with a quadratic tip is denoted by
$\mathcal{U}^r$. We will consider different metrics on this set
denoted by $\text{dist}_k$ with $k=0,1,2$ (in fact the usual 
$C^k$ metrics).

A unimodal map $f:[0,1]\to [0,1]$ with quadratic tip $c$ is {\it
renormalizable} if
\begin{itemize}
\item $c\in [f^2(c), f^4(c)]\equiv I_0^1$, \item
$f(I_0^1)=[f^3(c),f(c)]\equiv I_1^1$, \item $I_0^1\cap
I_1^1=\emptyset$.
\end{itemize}
The set of renormalizable $C^r$ unimodal maps is denoted by
$\mathcal{U}_0^r\subset \mathcal{U}^r$. Let $f\in \mathcal{U}_0^r$
be a renormalizable map. The renormalization of $f$ is defined by
$$
Rf(x) =   h^{-1}    \circ  f^2\circ h(x),
$$
where $h:[0,1]\to I_0^1$ is the orientation reversing affine
homeomorphism. This map $Rf$ is again a unimodal map. The
nonlinear operator $R:\mathcal{U}_0^r\to \mathcal{U}^r$ defined by
$$
R:f\mapsto Rf
$$
is called the {\it renormalization operator}. The set of {\it
infinitely} renormalizable maps is denoted by
$$
W^r=\bigcap_{n\ge 1} R^{-n}(\mathcal{U}_0^r).
$$
There are many fundamental steps needed to reach the following
result by Davie, see \cite{Da}. For a brief history see \cite{FMP}
and references therein.

\begin{thm}\label{davie} (Davie) Let $\alpha<1$ close enough to one. There exists a
unique renormalization fixed point $f_*^\omega\in
\mathcal{U}^{2+\alpha}$. It has the following properties.
\begin{itemize}
\item $f_*^\omega$ is analytic, \item $f_*^\omega$ is a hyperbolic
fixed point of $R:\mathcal{U}_0^{2+\alpha}\to
\mathcal{U}^{2+\alpha} $, \item the codimension one stable
manifold of $f_*^\omega$ coincides with $W^{2+\alpha}$. \item
$f_*^\omega$ has a one dimensional unstable manifold which
consists of analytic maps.
\end{itemize}
\end{thm}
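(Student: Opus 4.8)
The plan is to deduce the theorem from the renormalization theory already available in the holomorphic category, combined with the real and complex a priori bounds, and then to transfer hyperbolicity from analytic maps down to the class $C^{2+\alpha}$ with $\alpha$ close to one. First I would invoke the holomorphic theory as a black box: there is an analytic unimodal map $f_*^\omega$ with non-degenerate critical point which, in a suitable Banach space $\mathcal{A}_\rho$ of bounded holomorphic maps on a fixed complex neighbourhood of $[0,1]$, is the unique period-doubling renormalization fixed point and is hyperbolic, the derivative $DR$ having a single eigenvalue $\delta>1$ (the Feigenbaum ratio), the remainder of its spectrum contained in a disk of radius strictly less than one, and local codimension-one stable manifold equal to the set of infinitely renormalizable analytic maps. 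This is the content of Lanford's work and, by means that generalize to other renormalizations, of Sullivan--McMullen--Lyubich.

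Next I would check that $R$ is well defined and real-analytically smooth on $\mathcal{U}_0^{2+\alpha}$, that $f_*^\omega$ remains a fixed point there, and then carry out the spectral analysis of $L := DR_{f_*^\omega}$ on the Banach space $T_{f_*^\omega}\mathcal{U}^{2+\alpha}$ of admissible tangent vectors. The operator $L$ is, up to a finite-rank correction coming from the dependence of the rescaling $h$ on $f$, a sum of two weighted composition operators, $v \mapsto \pm|Dh|^{-1}\bigl(Df(f\circ h)\cdot(v\circ h) + v\circ(f\circ h)\bigr)$. Using the real bounds (valid for any $C^2$ map with a quadratic tip) one gets uniformly bounded geometry of deep renormalizations, and for $\alpha$ close to one these renormalizations admit asymptotically holomorphic extensions with a definite modulus (complex bounds). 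A tangent vector is decomposed as $v = v_{\mathrm{hol}} + v_{\mathrm{err}}$, with $v_{\mathrm{hol}} \in \mathcal{A}_\rho$ and $v_{\mathrm{err}}$ recording the failure of holomorphy; $L$ respects this decomposition modulo lower-order (compact) terms, the $v_{\mathrm{hol}}$-block carrying the known analytic spectrum and the $v_{\mathrm{err}}$-block accounting for the essential spectrum of $L$ on $C^{2+\alpha}$, whose radius one bounds by an explicit quantity $\rho(\alpha)$ that tends to the analytic value (zero) as $\alpha \to 1$ and is $<1$ for $\alpha$ close enough to one. It follows that $L$ on $C^{2+\alpha}$ has essential spectral radius $<1$, its only spectrum outside the closed unit disk is the simple eigenvalue $\delta$, and the rest lies strictly inside; hence $f_*^\omega$ is hyperbolic with a one-dimensional unstable and a codimension-one stable direction. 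Since $R$ preserves analyticity and the unstable eigendirection is analytic, the global unstable manifold, being the forward orbit of the local one, consists of analytic maps.

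It remains to identify the stable manifold with $W^{2+\alpha}$ and to establish global uniqueness. The set $W^{2+\alpha}$ is $R$-invariant, and combining the a priori bounds with the contraction just obtained shows that the renormalizations of any infinitely renormalizable $C^{2+\alpha}$ map converge to $f_*^\omega$, so $W^{2+\alpha} \subset W^s(f_*^\omega)$; conversely points on $W^s(f_*^\omega)$ have renormalizations converging to $f_*^\omega$, hence are infinitely renormalizable (renormalizability being open near the fixed point), so $W^s(f_*^\omega) \subset W^{2+\alpha}$, and together with the codimension-one local picture this determines the stable manifold globally. Uniqueness is then immediate: any fixed point $f$ of $R$ in $\mathcal{U}^{2+\alpha}$ satisfies $R^n f = f$ for all $n$ and lies in $W^{2+\alpha}$, so $f = \lim_n R^n f = f_*^\omega$.

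The main obstacle is the middle step: transferring hyperbolicity from the analytic to the $C^{2+\alpha}$ category, that is, bounding $L$ on the non-holomorphic directions and showing that its essential spectral radius on $C^{2+\alpha}$ drops below one. This is exactly where ``$\alpha$ close to one'' is used, and the estimate degrades as $\alpha \downarrow 0$; controlling it requires a careful analysis of the composition and rescaling operators making up $DR$ in tandem with the complex-bounds extensions of deep renormalizations, and this is the technical heart of the argument.
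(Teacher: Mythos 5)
This theorem is not proved in the paper: it is quoted as background, attributed to Davie \cite{Da}, with the history deferred to \cite{FMP}. So there is no in-paper argument to compare yours against, and your proposal can only be judged against the known proof in the literature. At that level, your architecture is the right one: take the analytic theory (Lanford; Sullivan--McMullen--Lyubich) as a black box, and transfer hyperbolicity to the H\"older class by analyzing the linearization at the analytic fixed point, with the hypothesis ``$\alpha$ close to one'' entering through a bound on the essential spectral radius.

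There is, however, a genuine gap at the step you dispatch in one line. You assert that $R$ is ``real-analytically smooth on $\mathcal{U}_0^{2+\alpha}$'' and then run the standard hyperbolic fixed-point machinery. This is false: $R$ involves composition, and $f\mapsto f\circ f$ is Lipschitz but \emph{not} Fr\'echet differentiable on a H\"older space $C^{2+\alpha}$ --- in the remainder one meets terms like $D^2v\circ(f+v)-D^2v\circ f$, whose $C^\alpha$ seminorm is $O(\|v\|_{2+\alpha})$ but not $o(\|v\|_{2+\alpha})$. The linearization $L=DR_{f_*^\omega}$ is a bounded operator only because $f_*^\omega$ itself is analytic, and even there the differentiability of $R$ must be taken in a weakened, quantitative sense. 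Davie's actual contribution is precisely a nonstandard stable-manifold theorem for operators differentiable only at the fixed point (with explicit control of the remainder), which is also why the stable manifold he produces is a priori only a Lipschitz graph; without some replacement for smoothness your middle step does not go through. Two smaller points: the essential spectral radius bound does not tend to the analytic value zero as $\alpha\to 1$ --- it is a quantity that merely drops below $1$ past a threshold $\alpha_0<1$, which is the entire content of the hypothesis; and the identification $W^s(f_*^\omega)=W^{2+\alpha}$ together with global uniqueness requires convergence $R^nf\to f_*^\omega$ for \emph{every} infinitely renormalizable $C^{2+\alpha}$ map, not only those near the fixed point, which your local contraction does not by itself supply.
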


In our discussion we only deal with period doubling
renormalization. However, there are other renormalization schemes.
The hyperbolicity for the corresponding generalized
renormalization operator has been established in \cite{FMP}.\\

Our main results deal with $R:\mathcal{U}_0^{r}\to
\mathcal{U}^{r}$ where  $r\in\{1+Lip, 2, 2+|\cdot|\}$.

\begin{thm}
Let $d_n > 0$ be any sequence with $d_n \to 0$. There exists an
infinitely renormalizable $C^2$ unimodal map $f$ with quadratic
tip such that
$$dist_0 \left( R^nf, f^{\omega}_{*} \right)  \geq d_n.$$
\end{thm}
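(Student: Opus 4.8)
The plan is to construct $f$ as a carefully controlled perturbation of the analytic fixed point $f_*^\omega$, chosen so that the $C^2$-size of $R^nf - f_*^\omega$ decays as slowly as prescribed by $(d_n)$. The starting point is Davie's theorem: in $C^{2+\alpha}$ the operator $R$ is hyperbolic at $f_*^\omega$ with a one-dimensional unstable direction and a codimension-one stable manifold $W^{2+\alpha}$. On the stable manifold the derivative $DR$ at $f_*^\omega$ is a contraction with spectral radius some $\lambda<1$ in the $C^{2+\alpha}$ topology. The key observation is that the $C^2$ norm is \emph{strictly weaker} than the $C^{2+\alpha}$ norm: a perturbation can be small in $C^2$ while having a large (even unbounded along a sequence) $C^{2+\alpha}$ seminorm. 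So I would look for infinitely renormalizable $C^2$ maps lying in $W^2$ but ``far'' from $W^{2+\alpha}$ in the sense that their Hölder constants at the relevant scales blow up, which is exactly what kills the exponential contraction rate.

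Concretely, the steps I would carry out are: (1) Set up a family $f_t = f_*^\omega + t\,v$, or more flexibly $f_t = f_*^\omega + \phi_t$, where the perturbations $\phi_t$ are supported near the critical point (or spread across the dynamically defined intervals $I_0^1, I_1^1, \dots$) and are designed to be $C^2$-small but to carry a prescribed amount of ``roughness'' just beyond $C^2$. One must keep $f_t$ inside $\mathcal{U}_0^2$ — preserve the quadratic tip, the unimodal shape, and renormalizability — which is an open condition near $f_*^\omega$ and so survives small $C^2$ perturbations. (2) Show $f_t$ is infinitely renormalizable: since $f_*^\omega$ has $a$ $priori$ bounds and these are robust, a $C^2$-small perturbation of an infinitely renormalizable map with good bounds is again infinitely renormalizable — this uses the standard real-bounds machinery. (3) Analyze the action of $R^n$ on $f_t$: the renormalization operator involves composition $f^2$ and affine rescaling $h$, and affine rescaling \emph{amplifies} higher-order regularity defects. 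The heart of the matter is to track how a controlled $C^2$ bump, after $n$ renormalizations, still produces a definite lower bound $dist_0(R^nf_t, f_*^\omega) \ge d_n$. (4) Finally, choose the profile and amplitude of the perturbation (as a function of the target sequence $(d_n)$, which decays but can do so arbitrarily slowly) so that the decay of $dist_0(R^nf, f_*^\omega)$ is dominated below by $d_n$; since $d_n\to 0$ we have room, because we only need the convergence $R^nf\to f_*^\omega$ to hold (which it does by the companion $C^{2+|\cdot|}$/$C^2$ convergence results) but to be slow.

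The main obstacle — and where the real work lies — is step (3)–(4): obtaining a \emph{lower} bound on $dist_0(R^nf, f_*^\omega)$ that is both robust and tunable. Upper bounds on renormalization are the usual currency; lower bounds require showing that the ``slow'' part of the perturbation is not prematurely destroyed by the contraction on $W^2$ coming from the analytic/$C^{2+\alpha}$ theory. The strategy is to exploit the gap between topologies: decompose the perturbation into a component that is genuinely $C^{2+\alpha}$-small (hence contracted at rate $\lambda^n$, negligible) and a residual ``rough'' component living at successively finer scales whose $C^2$-size is controlled from below by the geometry of the renormalization microscope. One then needs quantitative distortion/composition estimates showing that $R$ does not contract this rough component faster than polynomially, plus a diagonal/limiting argument (pass to a limit of finitely-perturbed maps, or sum a sequence of scale-localized bumps) to realize an \emph{arbitrary} slow rate $d_n$ simultaneously for all $n$. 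Uniqueness of the resulting fixed-point-free orbit is not claimed here — consistent with the stated open question — so the construction only needs existence, which gives enough flexibility to push the argument through.
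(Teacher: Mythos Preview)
Your proposal has the right instinct---perturb $f_*^\omega$ and track what survives under $R^n$---but it misses the concrete geometric mechanism that makes the lower bound work, and without that mechanism step (3)--(4) cannot be carried out as you describe.

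The paper's proof does not use Davie's hyperbolicity, spectral decompositions, or robustness of a priori bounds at all. Instead it exploits one structural fact about the fixed point: the nested central intervals $I_0^n$ contain \emph{gaps} $G_n = I_0^n \setminus (I_0^{n+1} \cup I_{2^n}^{n+1})$ that are disjoint from the Cantor attractor $\Lambda$, and the self-similarity $Rf_*^\omega = f_*^\omega$ sends the gap $G_{n+1}$ affinely (after the $q_c$-coordinate) onto $G_n$. One writes $f_*^\omega = \phi \circ q_c$, inserts a $C^2$ bump $\phi_{t_n}$ of amplitude $t_n \asymp d_n$ supported on $\hat G_n = q_c(G_n)$ for each $n$, and sets $f = \phi \circ \psi \circ q_c$ with $\psi$ equal to the identity off $\bigcup \hat G_n$. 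Because the perturbation is supported away from $q_c(\Lambda)$, the \emph{critical orbit of $f$ is identical to that of $f_*^\omega$}; hence $f$ is automatically infinitely renormalizable with the same combinatorics and the same Cantor set---no robustness argument is needed. And because $f$ agrees with $f_*^\omega$ on $\Lambda$, the $n$-th renormalization $R^n f$ agrees with $f_*^\omega$ except on the rescaled gap, where the bump $\phi_{t_n}$ now sits at scale $|\hat G_1|$. This gives the lower bound $|R^n f - f_*^\omega|_0 \ge m\,|\hat G_1|\,C_1\, t_n = d_n$ by direct computation on a single interval; there is no need to control propagation of the perturbation through iterated compositions. The remaining work is checking that the infinite sum of bumps still defines a $C^2$ map at $c$, which is a local second-derivative estimate.

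Your plan, by contrast, would perturb in a way that may move the critical orbit, forcing you to re-establish infinite renormalizability and then to trace a perturbation through $2^n$ compositions with distortion---precisely the difficulty you flag as ``where the real work lies.'' The decomposition into a $C^{2+\alpha}$-small part plus a rough residual is not enough: you would need the rough part to survive $R^n$ with a \emph{definite} $C^0$ lower bound for every $n$, and nothing in your outline produces such a bound. The missing idea is to localize the $n$-th bump in the $n$-th gap so that it is invisible to the dynamics until the $n$-th renormalization brings it to unit scale.
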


\begin{cor} The analytic unimodal map $f_*^\omega$
is not a hyperbolic fixed point of $R:\mathcal{U}_0^{2}\to
\mathcal{U}^{2} $.
\end{cor}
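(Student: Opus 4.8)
The plan is to obtain the Corollary from the Theorem by contradiction: hyperbolicity of $f_*^\omega$ would force the attraction along its stable manifold to be uniformly exponential, whereas the Theorem exhibits infinitely renormalizable maps that approach $f_*^\omega$ as slowly as one likes. So suppose $f_*^\omega$ were a hyperbolic fixed point of $R\colon\mathcal{U}_0^2\to\mathcal{U}^2$. Just as in Theorem \ref{davie}, hyperbolicity yields a local stable manifold $W^s_{\mathrm{loc}}$ through $f_*^\omega$ (codimension one, tangent to the contracting subspace of the linearization) together with constants $C\ge 1$ and $\lambda\in(0,1)$ such that $\mathrm{dist}_2(R^ng,f_*^\omega)\le C\lambda^n\,\mathrm{dist}_2(g,f_*^\omega)$ for every $g\in W^s_{\mathrm{loc}}$ and every $n\ge 0$; since $\mathrm{dist}_0\le\mathrm{dist}_2$ the same inequality holds with $\mathrm{dist}_0$ on the left.

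Next I would invoke the Theorem with a sequence decaying more slowly than every geometric one, for instance $d_n=1/n$: this produces an infinitely renormalizable $C^2$ unimodal map $f$ with a quadratic tip such that $\mathrm{dist}_0(R^nf,f_*^\omega)\ge 1/n$ for all $n$. Being infinitely renormalizable, $f$ belongs to $W^2$; since $C^2$ is contained in the class $C^{2+|\cdot|}$ for which the usual a priori bounds hold (as recalled in the introduction), the orbit $\{R^nf\}$ is precompact in $\mathcal{U}^2$, and together with the fact that the construction behind the Theorem drives $R^nf$ to $f_*^\omega$ this places the tail $R^Nf$ in $W^s_{\mathrm{loc}}$ for some $N$. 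Consequently, for $n\ge N$,
\[
\mathrm{dist}_0(R^nf,f_*^\omega)=\mathrm{dist}_0\bigl(R^{n-N}(R^Nf),f_*^\omega\bigr)\le C\lambda^{n-N}\,\mathrm{dist}_2(R^Nf,f_*^\omega)=C'\lambda^n,
\]
with $C'$ independent of $n$. For $n$ large this is smaller than $1/n$, contradicting $\mathrm{dist}_0(R^nf,f_*^\omega)\ge 1/n$; hence $f_*^\omega$ is not hyperbolic.

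The step I expect to be the genuine obstacle — and the reason the content of the result lives in the Theorem rather than in the Corollary — is ensuring that the slowly converging maps $f$ actually reach the local stable manifold: that $R^nf$ not merely keeps distance $\ge d_n$ from $f_*^\omega$ but genuinely converges to $f_*^\omega$ while eventually staying in a fixed neighborhood of it. The lower bound $\ge d_n$ by itself is consistent with the orbit remaining bounded away from $f_*^\omega$, and hyperbolicity of $f_*^\omega$ alone does not force every map of $W^2$ to converge to it (uniqueness in $C^2$ being left open); so the construction proving the Theorem must be engineered so that the orbit is asymptotic to $f_*^\omega$ with the prescribed slow rate, and the a priori bounds must be uniform enough over $C^2$ to keep that orbit under control. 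A smaller, technical point is pinning down what \emph{hyperbolic} should mean for $R$ on the Banach manifold $\mathcal{U}_0^2$ — the dependence of the affine rescaling $h$ on $f$ makes naive $C^1$-differentiability of $R$ delicate — but any usable formulation forces exactly the uniform exponential contraction along $W^s_{\mathrm{loc}}$ that the argument relies on.
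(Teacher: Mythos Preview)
Your overall strategy is right and is exactly how the paper intends the Corollary to follow (no separate proof is given): hyperbolicity would force exponential contraction along $W^s_{\mathrm{loc}}$, contradicting the arbitrarily slow lower bounds supplied by the Theorem.

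However, the step where you place the orbit in $W^s_{\mathrm{loc}}$ contains a false assertion. You write that ``$C^2$ is contained in the class $C^{2+|\cdot|}$'' and use this to invoke the a~priori bounds and the $C^{2+|\cdot|}$ convergence theorem. The inclusion goes the other way: $C^{2+|\cdot|}\subsetneq C^2$. The paper remarks explicitly (just after the definition in \S\ref{c2abs}) that there exist $C^2$ unimodal maps which are not $C^{2+|\cdot|}$, and Remark~\ref{finalrem} shows that the very map $f$ produced by the slow-convergence construction with a \emph{non-summable} sequence such as your $d_n=1/n$ fails to be $C^{2+|\cdot|}$. So neither the a~priori bounds of \S\ref{ap bounds} nor the convergence theorem applies to this $f$, and your route to $R^Nf\in W^s_{\mathrm{loc}}$ collapses. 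The repair is the one you yourself anticipate in your last paragraph: read it off the construction. The map $f$ there is a perturbation of $f_*^\omega$ supported on the gaps of the Cantor attractor, with bumps of size $t_n\asymp d_n$; the $C^2$ computation that closes the proof of the Theorem gives $|D^2f-D^2f_*^\omega|=O(t_n)$ on $G_n$, hence $\mathrm{dist}_2(f,f_*^\omega)=O(\sup_n d_n)$, and the same holds for each $R^nf$ since renormalization merely shifts the sequence of bumps. Scaling $(d_n)$ by a small constant therefore puts $f$ and its entire $R$-orbit inside any prescribed $C^2$-neighborhood of $f_*^\omega$ while keeping the decay sub-exponential; hyperbolicity would then force exponential convergence, and the contradiction follows as you wrote.
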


In $\S~\ref{c2abs}$ we will introduce a type of
differentiability of a unimodal map, called $C^{2+|\cdot|}$,  which is
the minimal needed to be able to apply the classical proofs of a
priori bounds for the invariant Cantor sets of infinitely
renormalizable maps, see for example
\cite{M2},\cite{MMSS},\cite{MS}. This type of differentiability
will allow us to represent any $C^{2+|\cdot|}$ unimodal map as
$$
f=\phi \circ q,
$$
where $q$ is a quadratic polynomial and $\phi$ has still enough
differentiability to control cross-ratio distortion. The precise
description of this decomposition is given in Proposition
\ref{non-lin-bound}. For completeness we include the proof of the
a priori bounds in $\S~\ref{ap bounds}.$

\begin{thm}
If $f$ is an infinitely renormalizable $C^{2+|\cdot|}$ unimodal map then
$$\lim_{n \to \infty} dist_0 \left( R^nf, \;f_{*}^{\omega} \right) = 0.$$
\end{thm}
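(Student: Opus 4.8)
The plan is to combine the a priori bounds of $\S\ref{ap bounds}$ with Davie's Theorem~\ref{davie}, the point being that the smoothness class $C^{2+|\cdot|}$ is exactly strong enough to make the first available and, in the limit, to push the renormalizations into a smaller class where the second applies. First I would invoke the a priori bounds, established through the decomposition $R^nf=\phi_n\circ q_n$ of Proposition~\ref{non-lin-bound} (in the spirit of \cite{M2},\cite{MMSS},\cite{MS}), to show that the orbit $\{R^nf:n\ge 1\}$ is precompact in the $C^0$ (indeed $C^2$) topology: the diffeomorphic factors $\phi_n$ lie in a bounded family with uniformly controlled nonlinearity, the quadratic factors $q_n$ lie in a compact family, and the combinatorial intervals $I_0^1,I_1^1$ of $R^nf$ stay uniformly separated. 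Hence the set $K$ of all subsequential $C^0$-limits of $(R^nf)_n$ is nonempty and compact, every $g\in K$ is again a renormalizable quadratic-tip map whose geometry obeys the same bounds, and $\mathrm{dist}_0(R^nf,K)\to 0$. Since $f\in W^{2+|\cdot|}$, each $R^nf$ is infinitely renormalizable, and this property — together with the a priori bounds — passes to the limit, so $K$ consists of infinitely renormalizable maps with uniform geometry.

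The crucial step is to upgrade the regularity of the maps in $K$: I would show that there is $\alpha>0$, depending only on the a priori bounds and not on $f$, with $K\subset\mathcal{U}^{2+\alpha}$, hence $K\subset W^{2+\alpha}$. The mechanism is that each renormalization step zooms in by a definite factor, and the $C^{2+|\cdot|}$ hypothesis is precisely what is needed so that, under this repeated rescaling, the borderline modulus of continuity of the second derivative of the diffeomorphic factor is transformed — over infinitely many compositions — into a genuine Hölder modulus; concretely, one tracks how the "$|\cdot|$"-part of $D^2$ behaves under the composition and affine rescaling defining $Rf$ and deduces that the $\phi_n$ have uniformly bounded $C^{2+\alpha}$ norm, so that their $C^2$-limits are in fact $C^{2+\alpha}$. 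I expect this regularity gain to be the main obstacle, and it is the place where the decomposition of Proposition~\ref{non-lin-bound} does its real work; one must be careful that the gained exponent $\alpha$ is uniform, since otherwise the limit maps need not land in a single class $\mathcal{U}^{2+\alpha}$ to which Theorem~\ref{davie} applies.

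Granting $K\subset W^{2+\alpha}$, I would conclude via Theorem~\ref{davie}. Since $W^{2+\alpha}$ is the stable manifold of the hyperbolic fixed point $f_*^\omega$, every $g\in K$ satisfies $R^mg\to f_*^\omega$ as $m\to\infty$; by hyperbolicity each such $g$ enters a fixed hyperbolic neighbourhood of $f_*^\omega$ in uniformly bounded time and thereafter contracts, so by compactness of $K$ this convergence is uniform on $K$, i.e. $\sup_{g\in K}\mathrm{dist}_0(R^mg,f_*^\omega)\to 0$. Now given $\varepsilon>0$, fix $M$ with $\mathrm{dist}_0(R^Mg,f_*^\omega)<\varepsilon$ for all $g\in K$. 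Because $\mathrm{dist}_0(R^nf,K)\to 0$ and, by the a priori bounds, $K$ lies in the interior of the domain of renormalizability, on which $R$ and hence $R^M$ are continuous (uniformly so on a neighbourhood of the compact set $K$), for all sufficiently large $n$ one has $R^nf$ close to some $g_n\in K$ and therefore $\mathrm{dist}_0(R^{n+M}f,f_*^\omega)\le \mathrm{dist}_0(R^{n+M}f,R^Mg_n)+\mathrm{dist}_0(R^Mg_n,f_*^\omega)<2\varepsilon$. As $\varepsilon>0$ was arbitrary, $\mathrm{dist}_0(R^nf,f_*^\omega)\to 0$, which is the assertion; applied to a fixed point (whose orbit is constant) the same argument yields the global uniqueness of $f_*^\omega$ in $\mathcal{U}^{2+|\cdot|}$.
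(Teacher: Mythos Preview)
The regularity upgrade in your second paragraph is a genuine gap, and the mechanism you sketch is actually false. You propose to show that the $\phi_n$ in $R^nf=\phi_n\circ q_n$ have \emph{uniformly bounded $C^{2+\alpha}$ norm}; but that would make the maps $R^nf$ themselves uniformly $C^{2+\alpha}$, and since each $R^nf$ is infinitely renormalizable it would then lie in Davie's stable manifold $W^{2+\alpha}$ and converge \emph{exponentially} to $f_*^\omega$. This is incompatible with the paper's own Theorem (and the last Remark) producing, for any summable sequence $d_n$, a $C^{2+|\cdot|}$ map with $\mathrm{dist}_0(R^nf,f_*^\omega)\ge d_n$; take $d_n=n^{-2}$. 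So no uniform H\"older gain on the $\phi_n$ can be extracted from the $C^{2+|\cdot|}$ hypothesis, and without such uniform control $C^0$-limits of $R^nf$ have no reason to be $C^{2+\alpha}$. (There is also a secondary issue: Theorem~\ref{davie} is only stated for $\alpha$ close to $1$, while any $\alpha$ one could hope to squeeze out of a priori bounds would be small.)

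The paper avoids this obstacle entirely by a different route. It never upgrades the regularity of $R^nf$; instead it builds, for each $n$, a genuinely \emph{polynomial} shadow $f_n=q_{2^n-1}^n\circ\cdots\circ q_1^n\circ q_0^n$ by stripping out the diffeomorphic factors $\phi_j^n$ from the chain rule decomposition of $R^nf$ (Proposition~\ref{approx-qudratic-prop}), proves $|R^nf-f_n|_1\to 0$ using $\sum_j|\phi_j^n|\to 0$ and a Sandwich Lemma, and then applies Lyubich's hyperbolicity in the space of quadratic-like maps (Proposition~\ref{ml}), not Davie's theorem, to the $f_n$: there is $N_0$ with $\mathrm{dist}_1(R^{N_0}f_n,W^u)\le\tfrac13\,\mathrm{dist}_1(f_n,W^u)$. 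A simple recursion $z_{n+1}\le\tfrac13 z_n+K\delta_{nN_0}$ with $\delta_n=\mathrm{dist}_0(R^nf,f_n)\to 0$ then shows $R^nf\to W^u$, and the only infinitely renormalizable point of $W^u$ is $f_*^\omega$. The key idea you are missing is this polynomial approximation; compactness of $\{R^nf\}$ alone, even with a priori bounds, does not furnish enough analytic structure to invoke any known hyperbolicity theorem.
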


A construction similar to the one provided for $C^2$ unimodal maps leads
to the following result:

\begin{thm} Let $d_n
> 0$ be any sequence with $\sum_{n\ge 1} d_n < \infty$. There exists an infinitely
renormalizable $C^{2+|\cdot|}$ unimodal map $f$ with a quadratic tip
such that
$$dist_0 \left( R^nf, f^{\omega}_{*} \right)  \geq d_n.$$

The analytic unimodal map $f_*^\omega$ is not a hyperbolic fixed
point of $R:\mathcal{U}_0^{2+|\cdot|}\to \mathcal{U}^{2+|\cdot|} $.
\end{thm}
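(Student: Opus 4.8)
The plan is to build, in parallel to the $C^2$ construction of Theorem 1.3, an infinitely renormalizable map $f \in \mathcal{U}_0^{2+|\cdot|}$ whose orbit under $R$ approaches $f_*^\omega$ but does so at a rate no faster than a prescribed summable sequence $(d_n)$. The guiding principle is that by Theorem 1.6 we already know $R^n f \to f_*^\omega$ for \emph{every} infinitely renormalizable $C^{2+|\cdot|}$ map, so the only thing at issue is the \emph{speed}: we want the convergence to be as slow as a summable sequence allows, and then exhibit that a single fixed rate cannot be achieved uniformly, which contradicts hyperbolicity. Concretely, if $f_*^\omega$ were a hyperbolic fixed point of $R$ on $\mathcal{U}_0^{2+|\cdot|}$, then on its (codimension one) stable manifold $W^{2+|\cdot|}$ the convergence $\mathrm{dist}_0(R^n f, f_*^\omega)$ would decay at least geometrically, $\le C \lambda^n$ with $\lambda<1$, uniformly on a neighborhood; so producing an $f$ with $\mathrm{dist}_0(R^n f, f_*^\omega) \ge d_n$ for a summable but non-geometrically-decaying $d_n$ (e.g. $d_n = 1/n^2$) already kills hyperbolicity.

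The construction mirrors the one sketched for Theorem 1.3: start from a map in the unstable direction of $f_*^\omega$ (or from $f_*^\omega$ itself) and perturb it by a controlled sequence of small "bumps" localized on the dyadic pieces $I_0^n, I_1^n, \dots$ at successive renormalization scales, chosen so that the $n$-th perturbation survives $n$ applications of $R$ with size comparable to $d_n$, while being invisible — up to higher order — to the earlier renormalizations. The key point distinguishing this from the $C^2$ case is the smoothness bookkeeping. Using the decomposition $f = \phi \circ q$ from Proposition \ref{non-lin-bound}, one perturbs the factor $\phi$ rather than $f$ directly, so that the a priori bounds of $\S~\ref{ap bounds}$ remain in force and the orbit stays in a compact (in the relevant sense) family where Theorem 1.6 applies. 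Because the $C^{2+|\cdot|}$ modulus behaves subadditively over the telescoping scales, the \emph{summability} condition $\sum d_n < \infty$ is precisely what is needed to guarantee that the accumulated perturbation still defines a legitimate $C^{2+|\cdot|}$ unimodal map with a quadratic tip: each bump contributes an amount to the $C^{2+|\cdot|}$ norm comparable to $d_n$ (after the scale-$n$ affine rescaling by the universal scaling ratio), and these must add up. This is the structural reason the threshold here is $\sum d_n < \infty$, whereas in the rougher $C^2$ setting of Theorem 1.3 one only needed $d_n \to 0$.

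The main steps, in order, are: (i) fix the decomposition $f_*^\omega = \phi_* \circ q$ and identify the linear renormalization dynamics near $f_*^\omega$ acting on the $\phi$-factor, pinning down the universal scaling constant that governs how a perturbation at scale $n$ is dilated by $R^n$; (ii) design the $n$-th bump $\psi_n$, supported on the appropriate subinterval, with $C^{2+|\cdot|}$ size $\asymp d_n$, such that after $n$ renormalizations it contributes a $\mathrm{dist}_0$-term bounded below by a fixed multiple of $d_n$, and verify that bumps at scales $> n$ do not cancel this to leading order; (iii) sum the bumps, using $\sum d_n < \infty$ to conclude that $f = (\phi_* + \sum_n \psi_n)\circ q$ is a genuine element of $\mathcal{U}_0^{2+|\cdot|}$ which is infinitely renormalizable (here invoking the a priori bounds to keep the combinatorics correct under perturbation); (iv) read off $\mathrm{dist}_0(R^n f, f_*^\omega) \ge d_n$, and finally (v) take $d_n$ summable but not $O(\lambda^n)$ to contradict hyperbolicity on $\mathcal{U}_0^{2+|\cdot|}$. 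The hard part will be step (ii)–(iii): controlling the interaction between perturbations at different scales under the nonlinear operator $R$ — showing that the bumps really are "asymptotically orthogonal" so that the lower bound $d_n$ is not destroyed by the tail $\sum_{k>n} d_k$, and simultaneously that the $C^{2+|\cdot|}$ regularity (not merely $C^2$, and in particular the modulus-of-continuity condition on the second derivative of $\phi$) is preserved by the infinite sum. This requires the precise form of the decomposition in Proposition \ref{non-lin-bound} together with the cross-ratio distortion estimates underlying the a priori bounds.
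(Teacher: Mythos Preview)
Your overall strategy---perturb the diffeomorphic factor $\phi$ in $f_*^\omega=\phi\circ q_c$ by a sequence of bumps at successive scales, with $\sum d_n<\infty$ controlling the $C^{2+|\cdot|}$ norm via $\eta_\phi\in L^1$---matches the paper, and your identification of summability as the threshold is exactly right (see Remark~\ref{finalrem}: $\int_{\hat G_n}|\eta_{\tilde\phi}|\asymp d_n$, so $\int|\eta_{\tilde\phi}|\asymp\sum d_n$).

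Where you diverge is in what you call the ``hard part.'' You anticipate having to control interactions between bumps at different scales under the nonlinear operator $R$, and having to invoke the a~priori bounds to keep the perturbed map infinitely renormalizable. The paper avoids both issues with a single observation you are missing: place the $n$-th bump on the \emph{gap} $\hat G_n=q_c(G_n)$, where $G_n=I_0^n\setminus(I_0^{n+1}\cup I_{2^n}^{n+1})$ lies in the complement of the Cantor attractor~$\Lambda$. Since $\psi$ is then the identity on $q_c(\Lambda)$, the perturbed map $f=\phi\circ\psi\circ q_c$ has the \emph{same critical orbit} as $f_*^\omega$; hence $f$ is automatically infinitely renormalizable with the identical interval structure $\{I_j^n\}$, and no a~priori bounds argument is needed. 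Moreover, when you compute $R^nf$ on the first-level gap $G_1$, the orbit $y_0,\dots,y_{2^n-1}$ of $h_n(x)$ meets $\bigcup_k G_k$ exactly once (at $y_0\in G_n$: for $j\geq 1$ one has $y_j\in I_j^n\subset I_{2^k}^{k+1}$ whenever $2^k\|j$, hence $y_j\notin G_k$). So only the single bump $\phi_{t_n}$ is seen, giving directly $R^nf(x)=\phi\circ 1_1\circ\phi_{t_n}\circ 1_1^{-1}\circ q_c(x)$ on $G_1$ and thus $|R^nf-f_*^\omega|_0\geq m|\hat G_1|C_1 t_n=d_n$ by an exact computation---no tail $\sum_{k>n}d_k$ to control, no ``asymptotic orthogonality'' to establish. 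Your steps (ii)--(iii) collapse to a two-line verification once the support of the perturbation is chosen this way.
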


Our second set of theorems deals with renormalization of
$C^{1+Lip}$ unimodal maps with a quadratic tip.

\begin{thm}
There exists an infinitely renormalizable $C^{1+Lip}$ unimodal map
$f$ with a quadratic tip which is not $C^2$ but
$$Rf = f.$$
\end{thm}

The {\it topological entropy} of a system defined on a noncompact space is 
defined to be the supremum of the topological entropies contained in 
compact invariant subsets: we will always mean topological entropy when 
the type of entropy is not specified. As a consequence of Theorem 
~\ref{davie} we get that renormalization on
 $\mathcal{U}_0^{2+\alpha}$ has entropy zero.

\begin{thm} The
renormalization operator acting on the space of
$C^{1+Lip}$ unimodal maps with quadratic tip has infinite entropy.
\end{thm}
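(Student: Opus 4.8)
**Proof proposal for: "The renormalization operator acting on the space of $C^{1+Lip}$ unimodal maps with quadratic tip has infinite entropy."**

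The plan is to produce, for every $N$, a compact $R$-invariant subset on which $R$ is semiconjugate to the full shift on $N$ symbols (or at least contains a horseshoe of arbitrarily large entropy), so that the supremum of entropies over compact invariant sets is infinite. The natural building blocks are the exotic $C^{1+Lip}$ renormalization fixed points of Tresser \cite{Tr} together with the one just exhibited in the preceding theorem: a continuum of maps $f$ with $Rf=f$ that are $C^{1+Lip}$ but not $C^2$. The key mechanism is that in the $C^{1+Lip}$ world a priori bounds fail and the scaling data of an infinitely renormalizable map is essentially a free parameter at each level, so one has enormous freedom in prescribing the "itinerary" of ratios or of local shapes near the quadratic tip.

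Concretely, first I would fix a finite collection $g_1,\dots,g_N$ of pairwise-distinct $C^{1+Lip}$ renormalizable maps (for instance finitely many of Tresser's exotic fixed points, or small modifications of the analytic fixed point whose "$C^{1+Lip}$-profile" near $c$ is one of $N$ prescribed Lipschitz shapes). Then, given a bi-infinite or one-sided sequence $\underline{s}=(s_0,s_1,s_2,\dots)\in\{1,\dots,N\}^{\mathbb N}$, I would build an infinitely renormalizable $C^{1+Lip}$ unimodal map $f_{\underline s}$ whose $n$-th renormalization $R^n f_{\underline s}$ "looks like" $g_{s_n}$ up to an error that tends to $0$ as the index grows, by the standard gluing construction used elsewhere in this paper to force slow convergence: choose the scales $\lambda_n$ of the successive renormalization intervals to be exactly those of $g_{s_n}$, and interpolate the maps on the non-central pieces with $C^{1+Lip}$ bounds that are uniform in $\underline s$. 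The map $\underline s\mapsto f_{\underline s}$ should be continuous (in $\mathrm{dist}_1$) and satisfy $R f_{\underline s}=f_{\sigma\underline s}$ where $\sigma$ is the shift, so the closure of $\{f_{\underline s}\}$ is compact and $R$-invariant and $R$ restricted to it factors onto $(\{1,\dots,N\}^{\mathbb N},\sigma)$. Since topological entropy does not increase under a semiconjugacy only in one direction, I instead need the coding map to be a genuine factor or to contain an embedded copy of the shift; the cleanest route is to arrange that distinct $\underline s$ give maps that are separated by a definite $\mathrm{dist}_0$-distance at level $0$ (they carry different Lipschitz germs at the tip), so that the coding is injective and thus a conjugacy of $R$ on a compact set with the full $N$-shift, giving entropy $\ge\log N$. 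Letting $N\to\infty$ finishes the proof.

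The main obstacle is the construction of $f_{\underline s}$ as a genuine $C^{1+Lip}$ map with the gluing done \emph{uniformly} over all sequences $\underline s$, so that (i) the family is precompact, (ii) the renormalization of $f_{\underline s}$ is exactly $f_{\sigma\underline s}$ on the nose (not merely up to conjugacy-by-an-affine-map error, which one must absorb into the normalization), and (iii) the assignment is injective with uniform separation. The first two are essentially the same technical machinery already used in the construction behind the $C^2$ slow-convergence theorem and the $C^{1+Lip}$ fixed-point theorem in this paper, so I would cite and adapt those lemmas rather than redo them; the injectivity/separation is where a careful choice of the finitely many building germs $g_1,\dots,g_N$ matters — they must remain uniformly separated as recognizable features of $R^n f$ that survive the normalization at every level. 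A secondary point to check is lower semicontinuity: the invariant set I want is the \emph{closure} of $\{f_{\underline s}\}$, and I must verify that adding limit points does not collapse the symbolic dynamics, which follows if the coding extends continuously to the closure and stays finite-to-one. Once these are in place, the entropy bound $\log N$ for each $N$ is automatic from the variational principle for the shift, and the noncompact-space definition of entropy given just before the statement makes $\sup_N \log N = \infty$ the desired conclusion.
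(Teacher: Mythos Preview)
Your overall strategy --- embed a full $N$-shift into the domain of $R$ for every $N$ and conclude $h_{\mathrm{top}}(R)\ge\log N$ --- is exactly the paper's strategy. But the implementation you propose is considerably more involved than what is needed, and the three obstacles you flag (uniform $C^{1+Lip}$ gluing over all sequences, getting $Rf_{\underline s}=f_{\sigma\underline s}$ on the nose, injectivity with definite separation) arise precisely because you propose to vary the \emph{scaling data} with the symbols $s_n$.

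The paper sidesteps all of this by keeping the scaling structure fixed. One starts from the single piecewise-affine fixed point $f_{\sigma^*}$ already constructed, whose graph $F$ satisfies $F\cap\mathrm{image}(S)=S(F)$ for a fixed affine box map $S$ with $(\sigma_0^*)^2=\sigma_1^*$. Any $C^{1+Lip}$ extension $\phi:[x_1,1]\to[0,1]$ of $f_{\sigma^*}$ on the first gap yields, via $\bigcup_{k\ge 0}S^k(\mathrm{graph}\,\phi)$, a global $C^{1+Lip}$ unimodal map with quadratic tip; the Lipschitz constant of the derivative does not grow under iteration of $S$ exactly because $\sigma_1^*/(\sigma_0^*)^2=1$. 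Now pick $N$ distinct such gap-extensions $\phi_0,\dots,\phi_{N-1}$ and, for $\omega\in\{0,\dots,N-1\}^{\mathbb N}$, set $F(\omega)=\bigcup_{k\ge 1}S^k(\mathrm{graph}\,\phi_{\omega_k})$. This is the graph of a $C^{1+Lip}$ map $f_\omega$, the self-similarity of $S$ gives $Rf_\omega=f_{\tau(\omega)}$ exactly, injectivity is immediate since $f_\omega|_{[x_1,1]}=\phi_{\omega_1}$, and uniform Lipschitz bounds make the family compact. Hence $R$ contains a copy of the full $N$-shift for every $N$.

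In short, the freedom that generates entropy lives in the \emph{shape} of the extension on one fixed gap interval, not in the scales. Your route through distinct Tresser fixed points with varying scales could perhaps be pushed through (it is closer in spirit to the paper's Section on chaotic scaling data), but it imports exactly the technical burdens you list and is unnecessary here.
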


The last
theorem illustrates a specific aspect of the chaotic behavior of the
renormalization operator on $\mathcal{U}_0^{1+Lip}$:

\begin{thm}
There exists an infinitely renormalizable $C^{1+Lip}$ unimodal map
$f$ with quadratic tip such that $\displaystyle{\left \{c_n \right
\}_{n \geq 0}}$ is dense in a Cantor set. Here $c_n$ is the
critical point of $R^nf$.
\end{thm}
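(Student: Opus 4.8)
The plan is to realise a full shift inside the renormalization dynamics on $\mathcal{U}_0^{1+Lip}$, exactly as in the proof of the infinite entropy theorem above, but to choose the $C^{1+Lip}$ deformations so that the positions of the critical points along a dense orbit are governed by an iterated function system with disjoint pieces. \emph{Step 1 (a shift-equivariant family).} Using the $C^{1+Lip}$ deformation machinery behind the infinite entropy theorem (in the spirit of Tresser's continuum of renormalization fixed points \cite{Tr}), I would construct a family
$$\Phi\colon\{0,1\}^{\mathbb{N}}\longrightarrow\mathcal{U}_0^{1+Lip}$$
of infinitely renormalizable $C^{1+Lip}$ unimodal maps with a quadratic tip such that $R\circ\Phi=\Phi\circ\sigma$, where $\sigma$ is the one-sided shift. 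Here $\Phi(\epsilon)$ is assembled by prescribing its ``level-$0$ presentation'' according to the symbol $\epsilon_0\in\{0,1\}$ and installing $\Phi(\sigma\epsilon)$ inside the renormalization interval $I_0^1$; the content of the construction is that the gluing can be carried out within $C^{1+Lip}$, uniformly in $\epsilon$, with the critical tip genuinely quadratic, and that renormalization then acts on the nose as the shift. The one extra normalization I impose — which the available freedom permits — is that the level-$0$ data of $\Phi(\epsilon)$, in particular the value $\Phi(\epsilon)^2(0)$, depend only on $\epsilon_0$; write $\Phi(\epsilon)^2(0)=\beta_{\epsilon_0}$ with $0<\beta_0<\beta_1$ to be chosen small.

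\emph{Step 2 (the critical-point IFS).} For any renormalizable $g$ with a quadratic tip one has $g^2(c(g))=g(1)=0$, so $I_0^1=[\,0,\,g^2(0)\,]$ and the orientation-reversing affine homeomorphism $h\colon[0,1]\to I_0^1$ is $h(t)=g^2(0)(1-t)$. Since $g(I_0^1)=I_1^1$ is disjoint from $I_0^1\ni c(g)$, the unique critical point of $g^2$ in $I_0^1$ is $c(g)$, and from $Rg=h^{-1}\circ g^2\circ h$ it equals $h(c(Rg))$; hence
$$c(g)=g^2(0)\,\bigl(1-c(Rg)\bigr).$$
Applying this to $g=\Phi(\epsilon)$ and using Step~1, the map $\psi:=c\circ\Phi$ satisfies $\psi(\epsilon)=h^{(\epsilon_0)}(\psi(\sigma\epsilon))$ with $h^{(i)}(t)=\beta_i(1-t)$. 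Iterating and noting that $h^{(\epsilon_0)}\circ\cdots\circ h^{(\epsilon_{n-1})}([0,1])$ has diameter at most $\beta_1^{\,n}\to 0$, the value $\psi(\epsilon)$ is the unique point of $\bigcap_n h^{(\epsilon_0)}\circ\cdots\circ h^{(\epsilon_{n-1})}([0,1])$; thus $\psi$ is the (automatically continuous) coding map of the iterated function system $\{h^{(0)},h^{(1)}\}$. Setting $\mathcal{K}:=\psi(\{0,1\}^{\mathbb{N}})$, one checks that for $\beta_0,\beta_1$ small enough the sets $h^{(0)}(\mathcal{K})$ and $h^{(1)}(\mathcal{K})$ are disjoint; hence $\psi$ is injective and $\mathcal{K}$ is a Cantor set.

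\emph{Step 3 (conclusion).} Fix $\epsilon^*\in\{0,1\}^{\mathbb{N}}$ whose forward $\sigma$-orbit is dense — such points exist since the full $2$-shift is topologically transitive — and set $f:=\Phi(\epsilon^*)$. Then $f$ is an infinitely renormalizable $C^{1+Lip}$ unimodal map with a quadratic tip, and $c_n=c(R^nf)=c(\Phi(\sigma^n\epsilon^*))=\psi(\sigma^n\epsilon^*)\in\mathcal{K}$ for all $n$. Since $\{\sigma^n\epsilon^*:n\ge 0\}$ is dense in $\{0,1\}^{\mathbb{N}}$ and $\psi$ is a continuous surjection onto the compact set $\mathcal{K}$, the sequence $\{c_n\}_{n\ge 0}$ is dense in the Cantor set $\mathcal{K}$, which is what was claimed.

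The hard part is Step~1: one must carry out infinitely many gluings simultaneously for all $\epsilon$ while keeping the regularity exactly $C^{1+Lip}$, the tip quadratic, and the level-$0$ data a function of $\epsilon_0$ alone, and then verify that $R$ acts precisely as $\sigma$. This is essentially the same bookkeeping already required to prove the infinite entropy theorem, so in practice one adapts that construction rather than redoing it; Steps~2 and 3 are then elementary.
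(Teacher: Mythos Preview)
There is a genuine gap in Step~1. In the entropy construction you are invoking (Section~3.3 of the paper), the two choices $\phi_0,\phi_1$ are \emph{extensions of the same piecewise affine skeleton $f_{\sigma^*}$}; the scaling data $\sigma^*$, the renormalization interval $I_0^1=[0,\sigma_0^*]$, and the critical point $c^*$ are identical for every $\omega$. The only freedom is on the gap $[x_1,1]$, and this does not touch $f_\omega^2(0)=\sigma_0^*$. Hence for that family $\Phi(\epsilon)^2(0)$ is a constant independent of $\epsilon$, and your affine IFS $h^{(i)}(t)=\beta_i(1-t)$ collapses to a single contraction. The claim that ``the available freedom permits'' two distinct values $\beta_0\neq\beta_1$ is exactly what the entropy construction does \emph{not} provide, so Step~1 is not ``the same bookkeeping'' but a genuinely new construction.

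What the paper actually does (Section~4) is vary the scaling data itself: it introduces a one-parameter deformation $\sigma_0(c,\epsilon)=\epsilon\,q_c^2(0)$ and shows that the induced map on critical points $c\mapsto R(c,\epsilon)$ is \emph{expanding}; choosing $\epsilon_0\neq\epsilon_1$ close to $1$ gives a horseshoe whose invariant Cantor set carries the critical points $c_n=c(\tau^n\omega)$. Note that $\sigma_0$ here depends on $c$ as well as on the symbol, so the level-$0$ data cannot be made a function of $\epsilon_0$ alone as you assume; the consistency conditions (the analogues of (3.1)--(3.3)) force this coupling. The $C^{1+Lip}$ regularity with quadratic tip then hinges on the estimate $|\hat I_0^n|\asymp |I_0^n|^2$ (Lemma~4.2), the varying-data replacement for $(\sigma_0^*)^2=\sigma_1^*$; this is the step that controls the Lipschitz constants across scales and has no counterpart in your outline. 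Your Steps~2 and~3 are fine once a correct Step~1 is in hand, but the passage from the fixed-skeleton entropy family to a family with moving critical points is precisely the content of the theorem, not a routine adaptation.
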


\noindent
{\bf Acknowledgement} W.de Melo was partially supported by 
CNPq-304912/2003-4 and FAPERJ E-26/152.189/2002.

\section{Notation}

Let $I,J\subset \mathbb{R}^n$, with $n\ge 1$. We will use the
following notation.
\begin{itemize}
\item $cl(I)$, $int(J),$ $\partial{I},$ stands for resp. the
closure, the interior, and the boundary of $I$.
\item $|I|$ stands
for the Lebesgue measure of $I$.
 \item If $n=1$ then $[I, J]$ is smallest interval which contains $I$ and $J$.
\item $dist \;(x,y)$
is the Euclidean distance between $x$ and $y$, and $$dist \;(I,J)=\inf 
_{x\in I,\,y\in J}dist \;(x,y).$$
 \item If $F$ is a map between two sets then $\text{image}(F)$
 stand for the image of $F$.
\item Define $\text{Diff}_+^k\;([0, 1])$, $k\ge 1$, is the set of orientation preserving $C^k-$diffeomorphisms.
\item $|.|_{k}$, $k\ge 0$, stands for the $C^k$ norm of the
functions under consideration.
\item $dist_k$, $k\ge 0$, stands for the $C^k$ distance in the
function spaces under consideration.

\item There is a constant $K>0$, held fixed throughout the paper, which lets us write $Q_1\asymp Q_2$ if and only if
$$
\frac1K\le \frac{Q_1}{Q_2}\le K.
$$
\end{itemize}

There are two rather independent discussions. One on $C^{1+Lip}$maps
and the other on $C^2$ maps. There is a slight conflict in the
notation used for these two discussions. In particular, the notation
$I^n_1$ stands for different intervals in the two parts, but the
context will make the meaning of the symbols unambiguous.

\section{Renormalization of $C^{1+Lip}$ unimodal maps} \label{ren-of-c1lip}

\subsection{Piece-wise affine infinitely renormalizable maps.}
\noindent
Consider the open triangle $\Delta = \{(x, y) \;:\; x, y > 0\; \text{and} \;x+y < 1 \}.$
A point $(\sigma_0,\sigma_1) \in \Delta$ is called a {\it scaling bi-factor}.
A scaling bi-factor induces a pair of affine maps
$$ \tilde{\sigma}_0: [0, 1] \rightarrow [0, 1]\,,$$
$$ \tilde{\sigma}_1: [0, 1] \rightarrow [0, 1]\,,$$
defined by
\begin{eqnarray*}
 \tilde{\sigma}_0(t)  & = &  -\sigma_0 t+\sigma_0  =  \sigma_0(1-t)\\
 \tilde{\sigma}_1(t) & = &  \sigma_1 t+ 1-\sigma_1 =  1-\sigma_1(1-t).
\end{eqnarray*}
A function $\sigma : \mathbb{N} \rightarrow \Delta$ is called a {\it scaling data}.
For each $n\in \mathbb{N}$ we set $\sigma(n) = \left(\sigma_0(n), \sigma_1(n)\right)$, so that the point
$(\sigma_0(n), \sigma_1(n)\in \Delta$ induces a pair of maps $( 
\tilde{\sigma}_0(n),  \tilde{\sigma}_1(n)$ as we have just described. For 
each $n\in \mathbb{N}$ we can now define the pair of intervals:
$$ I_0^n =  \tilde{\sigma}_0(1) \circ  \tilde{\sigma}_0(2) \circ \dots  \circ  \tilde{\sigma}_0(n)([0, 1])\,,$$
$$ I_1^n =  \tilde{\sigma}_0(1) \circ  \tilde{\sigma}_0(2) \circ \dots  \circ  \tilde{\sigma}_0(n-1) \circ
 \tilde{\sigma}_1(n)([0, 1])\,.$$

\begin{figure}[ht]
\centering
\psfrag{I01}[c][c][1][0]{${\rm I}_0^1$}
\psfrag{Io1}[c][c][1][0]{${\rm I}_1^1$}
\psfrag{I02}[c][c][1][0]{${\rm I}_0^2$}
\psfrag{I03}[c][c][1][0]{${\rm I}_0^3$}
\psfrag{I13}[c][c][1][0]{${\rm I}_1^3$}
\psfrag{I12}[c][c][1][0]{${\rm I}_1^2$}
\psfrag{C}[c][c][1][0]{$c$}
\includegraphics[scale=1.5]{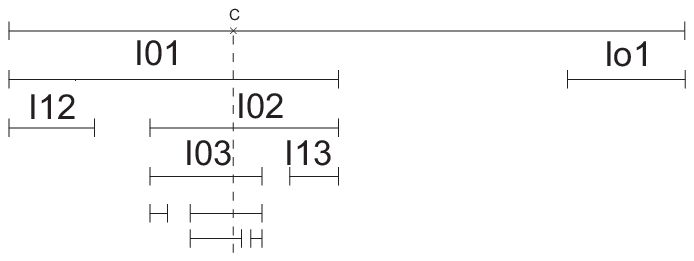}
\caption{}
\label{cantorset}
\end{figure}


A scaling data with the property
$${dist} \left(\sigma(n), \partial{\Delta} \right) \geq \epsilon >0$$
is called $\epsilon-${\it proper}, and {\it proper} if it is $\epsilon-$proper for some $\epsilon >0$. For $\epsilon-${\it proper} scaling data we have
$$ |I_j^n| \leq (1-\epsilon)^n$$
$\text{with} \;n \geq 1 \;\text{and} \; j=0,1.$
Given proper scaling data define
$$ \{c\} = \cap_{n \geq 1} I_0^n.$$
The point $c$, called the {\it critical} point, is shown in
Figure $\ref{cantorset}$. Consider the quadratic map 
$q_c : [0, 1] \rightarrow [0, 1]$ defined as: 
$$q_c(x) = 1- \left(\frac{x-c}{1-c} \right)^2.$$

\begin{figure}[ht]
\centering \psfrag{I01}[c][c][1][0]{${I}_0^1$}
\psfrag{Io1}[c][c][1][0]{${I}_1^1$}
\psfrag{I02}[c][c][1][0]{${I}_0^2$}
\psfrag{I03}[c][c][1][0]{${I}_0^3$}
\psfrag{I13}[c][c][1][0]{${I}_1^3$}
\psfrag{I12}[c][c][1][0]{${I}_1^2$}
\psfrag{qc}[c][c][1][0]{${q_c}$}
\psfrag{C}[c][c][1][0]{$c$}
\psfrag{fs}[c][c][1][0]{$f_{\sigma}$}
\includegraphics[scale=1.15]{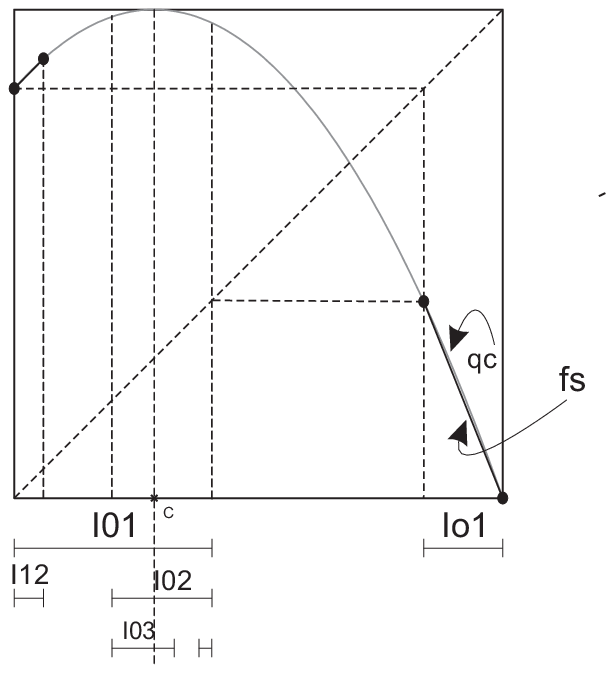}
\caption{}
\label{test.eps}
\end{figure}


Given a proper scaling data
$\sigma : \mathbb{N} \rightarrow \Delta $ and the set 
$D_{\sigma} = \cup_{n \geq 1} I_1^n$ induced by $\sigma$, 
we define a map $$f_{\sigma}: D_{\sigma} \rightarrow [0, 1]$$
by letting $f_{\sigma}|_{I_1^n}$ be the affine extension of 
$q_{c}|_{\partial{I_1^n}}.$ The graph of $f_{\sigma}$ is shown in 
Figure $\ref{test.eps}.$

\begin{figure}[ht]
\centering
\psfrag{I0n}[c][c][1][0]{${\rm I}_0^n$}
\psfrag{I1n}[c][c][1][0]{${\rm I}_1^n$}
\psfrag{I1n}[c][c][1][0]{${\rm I}_1^n$}
\psfrag{I0n-1}[c][c][1][0]{${\rm I}_0^{n-1}$}
\psfrag{Xn}[c][c][1][0]{${x_{n-1}}$}
\psfrag{xn1}[c][c][1][0]{${x_{n+1}}$}
\psfrag{Xn-1}[c][c][1][0]{${x_{n}}$}
\psfrag{Xn-2}[c][c][1][0]{${y_{n}}$}
\psfrag{Yn}[c][c][1][0]{${x_{n-2}}$}
\psfrag{yn1}[c][c][1][0]{${y_{n+1}}$}
\psfrag{C}[c][c][1][0]{${c}$}
\includegraphics[scale=0.95]{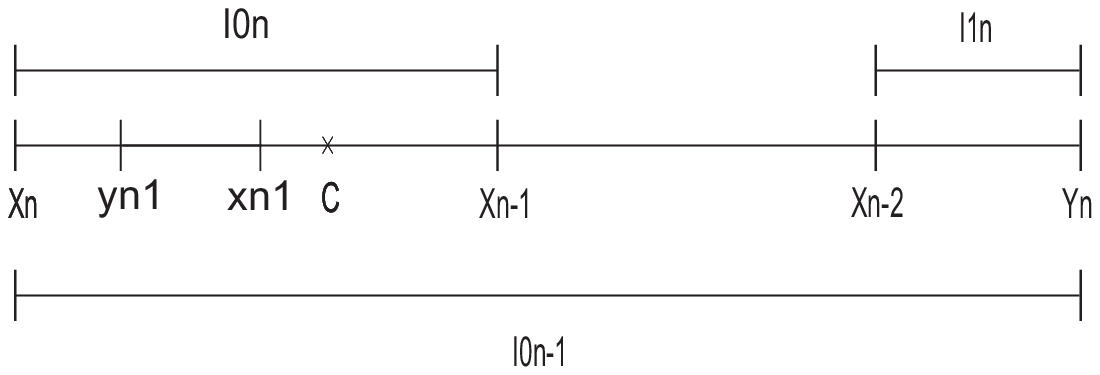}
\caption{}
\label{nthlevel}
\end{figure}


Define $x_0 =0, x_{-1}=1$ and for $n \geq 1$
\begin{eqnarray*}
x_n & = & \partial{I_0^n} \setminus \partial{I_0^{n-1}}, \\
y_n & = & \partial{I_1^n} \setminus \partial{I_0^{n-1}}.
\end{eqnarray*}
These points are illustrated in Figure $\ref{nthlevel}.$

\begin{defn}
A map $f_{\sigma}$ corresponding to proper scaling data \break
$\sigma : \mathbb{N} \rightarrow \Delta$ is called infinitely renormalizable if for
$n \geq 1$
\begin{enumerate}
\item [(i)] $[f_{\sigma}(x_{n-1}), 1]$ is the maximal domain containing $1$ on which
$f_{\sigma}^{2^n-1}$ is defined affinely.
\item [(ii)] $f_{\sigma}^{2^{n}-1}\left([f_{\sigma}(x_{n-1}\right), \;1]) = I_0^n.$
\end{enumerate}
\end{defn}
Define $W = \{ f_{\sigma} \ : f_{\sigma} \;\text{is infinitely renormalizable}\}.$
Let $f \in W$ be given by the proper scaling data
$\sigma : \mathbb{N} \rightarrow \Delta$ and define
$$ \hat{I_0^n} = [q_c(x_{n-1}),\;1] = [f(x_{n-1}), \;1].$$
Let
$$h_{\sigma,\; n} :[0,1] \rightarrow [0, 1]$$
be defined by
$$h_{\sigma,\;n}  =  \sigma_0(1) \circ \sigma_0(2) \circ \dots \circ
\sigma_0(n).$$
Furthermore let
$$ \hat{h}_{\sigma,\;n} :  [0, 1] \rightarrow \hat{I_0^n}$$
be the affine orientation preserving homeomorphism.
Then define
$$\mathrm{R}_n f_{\sigma} : h_{\sigma, n}^{-1}(D_{\sigma}) \rightarrow [0, 
\;1]$$
by
$$\mathrm{R}_n f_{\sigma} = \hat{h}^{-1}_{\sigma,\;n} \circ f_{\sigma} 
\circ
h_{\sigma,\; n}.$$

\begin{figure}[ht]
\centering
\psfrag{i0n}[c][c][0.8][0]{${\rm I}_0^n$}
\psfrag{i0p}[c][c][0.8][0]{${\rm \hat{I}}_0^n$}
\psfrag{Hs}[c][c][0.8][0]{$h_{\sigma, n}$}
\psfrag{Hsp}[c][c][0.8][0]{$\hat{h}_{\sigma, n}$}
\psfrag{fs}[c][c][1][0]{$f_{\sigma}$}
\psfrag{Rnf}[c][c][1][0]{${R_{n}f}$}
\includegraphics[scale=1]{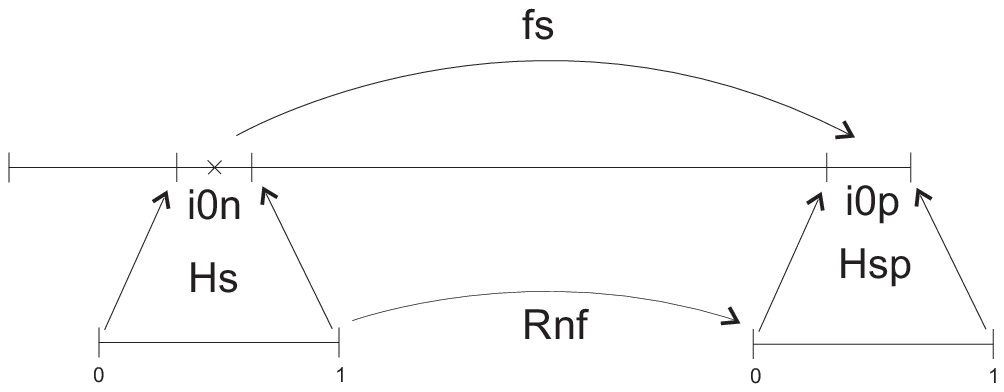}
\caption{}
\label{scaling}
\end{figure}


It is shown in Figure $\ref{scaling}$.
Let $s: \Delta^{\mathbb{N}} \rightarrow \Delta^{\mathbb{N}}$ be the shift
$$s(\sigma)(k) = \sigma(k+1).$$
The construction implies the following result:
\begin{lem} \label{lem1} Let $\sigma: \mathbb{N} \rightarrow \Delta$ be proper
scaling data such that $f_{\sigma}$ is infinitely renormalizable. Then
$$ \mathrm{R}_n f_{\sigma} = f_{s^n(\sigma)}.$$
\end{lem}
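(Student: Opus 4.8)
The plan is to verify the identity directly, unwinding the self-similar way in which $f_\sigma$ is assembled so that everything reduces to a single elementary identity between quadratic maps; throughout, the computations are just bookkeeping of the affine factors $\tilde\sigma_0(k),\tilde\sigma_1(k)$. Write $\tau=s^n(\sigma)$, so that $\tilde\tau_0(k)=\tilde\sigma_0(n+k)$, $\tilde\tau_1(k)=\tilde\sigma_1(n+k)$, and denote by $I_0^k[\tau],I_1^k[\tau]$ the intervals, and by $c_\tau$ the critical point, associated with $\tau$. Since $\sigma(m)\in\Delta$ forces $I_0^m\cap I_1^m=\emptyset$ and $I_1^m,I_0^m\subset I_0^{m-1}$, while $h_{\sigma,n}=\tilde\sigma_0(1)\circ\cdots\circ\tilde\sigma_0(n)$ maps $[0,1]$ onto $I_0^n$, one has $D_\sigma\cap I_0^n=\bigcup_{m>n}I_1^m$; cancelling the first $n$ affine factors against $h_{\sigma,n}^{-1}$ gives $h_{\sigma,n}^{-1}(I_1^m)=I_1^{m-n}[\tau]$ for $m>n$, whence $h_{\sigma,n}^{-1}(D_\sigma)=D_{\tau}$, so $\mathrm{R}_nf_\sigma$ and $f_\tau$ at least have the same domain.

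Next I would reduce the equality of maps to a statement about affine pieces. On $I_1^k[\tau]$ one has $h_{\sigma,n}(I_1^k[\tau])=I_1^{n+k}$, where $f_\sigma$ is affine, so $\mathrm{R}_nf_\sigma=\hat h_{\sigma,n}^{-1}\circ f_\sigma\circ h_{\sigma,n}$ is affine on $I_1^k[\tau]$, as is $f_\tau$ by definition; hence it is enough to check agreement at the endpoints $\partial I_1^k[\tau]$. At such a point $p$, $h_{\sigma,n}(p)\in\partial I_1^{n+k}$, where $f_\sigma$ coincides with $q_c$ (being the affine \emph{extension} of $q_c|_{\partial I_1^{n+k}}$), while $f_\tau$ coincides with $q_{c_\tau}$ on $\partial I_1^k[\tau]$. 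Thus the lemma reduces to the single identity $\hat h_{\sigma,n}^{-1}\circ q_c\circ h_{\sigma,n}=q_{c_\tau}$ between quadratic maps of $[0,1]$.

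To prove that identity I would match vertex and one further point, since both sides are downward parabolas. From $h_{\sigma,n}(I_0^k[\tau])=I_0^{n+k}$ one gets $h_{\sigma,n}(c_\tau)=\bigcap_k I_0^{n+k}=c$, so the left-hand parabola has its vertex at $c_\tau$ with value $\hat h_{\sigma,n}^{-1}(q_c(c))=\hat h_{\sigma,n}^{-1}(1)=1$, using that $\hat h_{\sigma,n}:[0,1]\to\hat I_0^n=[q_c(x_{n-1}),1]$ is the orientation-preserving affine homeomorphism; a short induction from $h_{\sigma,m}=h_{\sigma,m-1}\circ\tilde\sigma_0(m)$ with $\tilde\sigma_0(m)(1)=0$ gives $h_{\sigma,n}(1)=x_{n-1}$, so the left-hand parabola also takes the value $\hat h_{\sigma,n}^{-1}(q_c(x_{n-1}))=0$ at $1$. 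A downward parabola with vertex $(c_\tau,1)$ through $(1,0)$ is exactly $q_{c_\tau}(x)=1-((x-c_\tau)/(1-c_\tau))^2$, which is the desired identity. It then remains only to observe that $\tau=s^n(\sigma)$ inherits properness from $\sigma$, and that conditions (i)--(ii) for $\tau$ at level $k$ are conditions (i)--(ii) for $\sigma$ at level $n+k$ read through the conjugacy $h_{\sigma,n}$, so that $f_\tau$ is indeed infinitely renormalizable.

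The argument involves no conceptual difficulty; I expect the only real care to be needed in the third step, namely in getting the normalizations right — tracking the orientation of the iterated composition $\tilde\sigma_0(1)\circ\cdots\circ\tilde\sigma_0(n)$, checking that $h_{\sigma,n}$ sends $1$ to $x_{n-1}$ and $c_\tau$ to $c$, and confirming that $\hat h_{\sigma,n}$ is set up precisely so that those two values pin down the parabola $\hat h_{\sigma,n}^{-1}\circ q_c\circ h_{\sigma,n}$. Consulting Figure~\ref{nthlevel} to keep the positions of $x_{n-1},x_n,y_n$ straight should make this transparent.
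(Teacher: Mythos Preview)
Your argument is correct and is exactly the verification the paper has in mind: in the paper the lemma is stated as an immediate consequence of the construction (``The construction implies the following result''), and what you have written is precisely the unwinding of that construction---matching domains via $h_{\sigma,n}^{-1}(I_1^{n+k})=I_1^{k}[\tau]$, reducing to endpoints, and checking the affine--quadratic identity $\hat h_{\sigma,n}^{-1}\circ q_c\circ h_{\sigma,n}=q_{c_\tau}$ by vertex-plus-one-point. There is no alternative method at play here; you have simply supplied the omitted details.
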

Let next $f_{\sigma}$ be infinitely renormalizable, then for $n \geq 0$ we 
have
$$f_{\sigma}^{2^n}: \mathrm{D}_{\sigma} \cap I_0^n \rightarrow I_0^n$$
is well defined.
Define the renormalization $\mathrm{R}: W \rightarrow W$ by
$$\mathrm{R}f_{\sigma} = {h}^{-1}_{\sigma,\;1} \circ f_{\sigma}^2 \circ
h_{\sigma,\; 1}.$$
The map $f_{\sigma}^{2^n-1}: \hat{I}_0^n \rightarrow I_0^n$ is an affine
homeomorphism whenever \break
$f_{\sigma} \in W$. This implies immediately the following
Lemma.
\begin{lem} \label{lem2}
$\mathrm{R}^n f_{\sigma}: \mathrm{D}_{s^n(\sigma)} \rightarrow [0, 1]
\;\; \text{and} \;\;\mathrm{R}^n f_{\sigma} = \mathrm{R}_n f_{\sigma}.$
\end{lem}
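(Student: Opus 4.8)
The plan is to collapse the $n$-fold iterate $\mathrm{R}^n f_\sigma$ into a single affine conjugate of $f_\sigma$ and then to recognize that conjugate as $\mathrm{R}_n f_\sigma$. First I would unwind the iterate. Using the defining formula $\mathrm{R} g = h_{\tau,1}^{-1}\circ g^2\circ h_{\tau,1}$, where $\tau$ denotes the scaling data of $g$, together with the composition law $h_{\sigma,k}\circ h_{s^k\sigma,m}=h_{\sigma,k+m}$ --- which is immediate from $\tilde\sigma_0(k+j)=\widetilde{(s^k\sigma)_0}(j)$ and $h_{\sigma,n}=\tilde\sigma_0(1)\circ\cdots\circ\tilde\sigma_0(n)$ --- and the telescoping $(h^{-1}\circ g^2\circ h)^2=h^{-1}\circ g^4\circ h$, one proves by induction that
$$\mathrm{R}^n f_\sigma = h_{\sigma,n}^{-1}\circ f_\sigma^{\,2^n}\circ h_{\sigma,n}.$$
The inductive step requires that each intermediate map $\mathrm{R}^k f_\sigma$ be again of the form $f_\tau$ for proper infinitely renormalizable scaling data $\tau$, so that $\mathrm{R}$ applies to it with rescaling $h_{\tau,1}$; this is exactly what Lemma \ref{lem1} supplies, with $\tau=s^k\sigma$ (one also uses that the shift of proper infinitely renormalizable scaling data is again of that kind, i.e.\ $f_{s^k\sigma}\in W$, which is part of ``the construction'').

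Next I would factor $f_\sigma^{\,2^n}=f_\sigma^{\,2^n-1}\circ f_\sigma$ on $D_\sigma\cap I_0^n$, which is legitimate because $f_\sigma$ carries $D_\sigma\cap I_0^n$ into $\hat I_0^n=[f_\sigma(x_{n-1}),1]$ (part of the renormalization structure already encoded in the statement, recalled just before the Lemma, that $f_\sigma^{\,2^n}\colon D_\sigma\cap I_0^n\to I_0^n$ is well defined). By the displayed fact preceding the Lemma, $f_\sigma^{\,2^n-1}$ restricted to $\hat I_0^n$ is an affine homeomorphism onto $I_0^n$; and the affine homeomorphism $\hat I_0^n\to I_0^n$ of the appropriate orientation is, directly from the definitions of $h_{\sigma,n}$ and $\hat h_{\sigma,n}$, the map $h_{\sigma,n}\circ\hat h_{\sigma,n}^{-1}$ (both are affine and onto; $\hat h_{\sigma,n}$ preserves orientation while $h_{\sigma,n}$ is a composition of $n$ orientation-reversing affine maps, which matches the orientation of $f_\sigma^{\,2^n-1}|_{\hat I_0^n}$ forced by conditions (i)--(ii)). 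Hence on $D_\sigma\cap I_0^n$ one has $f_\sigma^{\,2^n}=h_{\sigma,n}\circ\hat h_{\sigma,n}^{-1}\circ f_\sigma$, and substituting into the previous display,
$$\mathrm{R}^n f_\sigma = h_{\sigma,n}^{-1}\circ h_{\sigma,n}\circ\hat h_{\sigma,n}^{-1}\circ f_\sigma\circ h_{\sigma,n} = \hat h_{\sigma,n}^{-1}\circ f_\sigma\circ h_{\sigma,n} = \mathrm{R}_n f_\sigma.$$

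Finally, for the domain: reading off the last display, $\mathrm{R}^n f_\sigma$ is defined precisely on $h_{\sigma,n}^{-1}(D_\sigma\cap I_0^n)$ --- no further preimage condition enters, since $f_\sigma$ is defined on all of $\hat I_0^n$ --- and since $h_{\sigma,n}$ maps the pieces $I_1^j$ belonging to $s^n\sigma$ onto the pieces $I_1^{n+j}$ belonging to $\sigma$, this set equals $D_{s^n\sigma}$, as claimed. I expect the only genuine work to be in the first step: verifying that $\mathrm{R}$ really composes as stated and that the intermediate iterates stay inside $W$ so that its defining formula keeps applying, together with the orientation bookkeeping in the second step; granting these, the conclusion is immediate from the definitions and Lemma \ref{lem1}.
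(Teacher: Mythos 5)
Your proposal is correct and follows exactly the route the paper intends: it establishes $\mathrm{R}^n f_\sigma = h_{\sigma,n}^{-1}\circ f_\sigma^{2^n}\circ h_{\sigma,n}$ and then uses the fact, stated immediately before the Lemma, that $f_\sigma^{2^n-1}\colon\hat I_0^n\to I_0^n$ is an affine homeomorphism (which must coincide with $h_{\sigma,n}\circ\hat h_{\sigma,n}^{-1}$) to collapse this to $\hat h_{\sigma,n}^{-1}\circ f_\sigma\circ h_{\sigma,n}=\mathrm{R}_n f_\sigma$. The paper leaves all of this as "immediate"; you have simply supplied the bookkeeping (the composition law for the $h_{\sigma,k}$, the orientation match, and the identification of the domain with $D_{s^n\sigma}$), which checks out.
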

\begin{prop}
$W = \{f_{\sigma^{*}}\}$ where $\sigma^{*}$ is 
characterized by $\mathrm{R}f_{\sigma^{*}} = f_{\sigma^{*}}$
\end{prop}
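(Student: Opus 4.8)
The plan is to convert membership in $W$ into a one-dimensional dynamical problem and then solve that problem. By Lemmas~\ref{lem1} and~\ref{lem2}, $\mathrm{R}$ acts on infinitely renormalizable piecewise affine maps as the shift $s$ on scaling data, and unwinding the definition of $W$ (by induction on the renormalization level, via Lemmas~\ref{lem1}--\ref{lem2}) shows that $f_\sigma\in W$ precisely when the level-one conditions (i) and (ii) hold for every iterate $f_{s^{n}(\sigma)}$, $n\ge 0$. I would first make those conditions explicit. Since $x_0=0$ is an endpoint of $I_1^2$, one has $f_\sigma(x_0)=q_c(0)$, and condition (i) for $n=1$ forces this to equal the left endpoint $1-\sigma_1(1)$ of $I_1^1$, the maximal interval of affinity of $f_\sigma$ that contains $1$; hence $\sigma_1(1)=c^2/(1-c)^2$, where $c=c(\sigma)$ is the critical point. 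Condition (ii) for $n=1$ says the affine extension of $q_c|_{\partial I_1^1}$ carries $I_1^1$ onto $I_0^1=[0,\sigma_0(1)]$, forcing $\sigma_0(1)=\psi(c)$ with $\psi(c):=q_c\big(1-c^2/(1-c)^2\big)$. Writing $\Psi(c)=\big(\psi(c),\,c^2/(1-c)^2\big)$ and using the nesting identity $c(\sigma)=\tilde{\sigma}_0(1)\big(c(s(\sigma))\big)=\sigma_0(1)\big(1-c(s(\sigma))\big)$, applied to every $f_{s^{n}(\sigma)}$, one obtains the characterization: $f_\sigma\in W$ if and only if, setting $c_n:=c(s^{n-1}(\sigma))$, one has $\sigma(n)=\Psi(c_n)$ for all $n$ and $(c_n)$ is a forward orbit $c_{n+1}=g(c_n)$ of the explicit map $g(c)=1-c/\psi(c)$ that stays, for some $\varepsilon>0$, in the compact set $\mathcal A_\varepsilon$ of those $c$ for which $\Psi(c)$ is at distance at least $\varepsilon$ from $\partial\Delta$.

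The second step is to analyse $g$ on the open admissible interval $\mathcal A=\{c:\Psi(c)\in\Delta\}$, which is cut out by the single inequality $\psi(c)+c^2/(1-c)^2<1$ and is an interval $(0,c_{\max})$. The computational heart of the argument is the estimate $g'(c)>1$ for every $c\in\mathcal A$, equivalently $\frac{d}{dc}\big(c/\psi(c)\big)<-1$ on $\mathcal A$. Granting it, both $g$ and $g-\mathrm{id}$ are strictly increasing on $\mathcal A$; since $g(c)\to-\infty$ as $c\to0^{+}$ while $g(c)>c$ for $c$ near $c_{\max}$ (a direct computation), $g-\mathrm{id}$ has exactly one zero $c^{*}\in\mathcal A$, a repelling fixed point, with $g(c)<c$ for $c<c^{*}$ and $g(c)>c$ for $c>c^{*}$. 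Consequently every forward orbit of $g$ in $\mathcal A$ other than the constant orbit at $c^{*}$ is strictly monotone and escapes each compact subinterval of $\mathcal A$: starting above $c^{*}$ it increases until it exceeds $g^{-1}(c_{\max})$, whereupon the next iterate leaves $\mathcal A$; starting below $c^{*}$ it decreases until some iterate is so small that $g$ sends it below $0$. Hence the only forward orbit of $g$ lying in some $\mathcal A_\varepsilon$ is $c_n\equiv c^{*}$.

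It remains to assemble the pieces. Since $g$ is continuous with $g-\mathrm{id}$ running from $-\infty$ to a positive value on $\mathcal A$, the fixed point $c^{*}$ exists and lies in the interior of $\mathcal A$, so $\Psi(c^{*})$ lies in the interior of $\Delta$, the constant scaling data $\sigma^{*}\equiv\Psi(c^{*})$ is proper, and by the characterization of the first paragraph $f_{\sigma^{*}}\in W$; as $s(\sigma^{*})=\sigma^{*}$, Lemma~\ref{lem2} gives $\mathrm{R}f_{\sigma^{*}}=f_{\sigma^{*}}$. Conversely, if $f_\sigma\in W$ then its sequence $(c_n)$ is a forward $g$-orbit in some $\mathcal A_\varepsilon$, hence $c_n\equiv c^{*}$ by the second step, hence $\sigma(n)=\Psi(c^{*})$ for all $n$, that is, $\sigma=\sigma^{*}$; so $W=\{f_{\sigma^{*}}\}$. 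Finally $\sigma\mapsto f_\sigma$ is injective on proper scaling data — the points of non-differentiability of $f_\sigma$ recover the endpoints of all the $I_j^n$, hence $\sigma$ — so the equation $\mathrm{R}f_{\sigma^{*}}=f_{\sigma^{*}}$ is equivalent to $s(\sigma^{*})=\sigma^{*}$ and indeed characterizes $\sigma^{*}$. I expect the main obstacle to be the derivative bound $g'(c)>1$ throughout $\mathcal A$: the map $g$ is completely explicit, but one must pin down the shape of $\mathcal A$ and, in particular, check that the turning point of $c/\psi(c)$ falls outside $\mathcal A$, so that $g$ is genuinely a single monotone branch there; modulo this, the rest is bookkeeping with Lemmas~\ref{lem1}--\ref{lem2}.
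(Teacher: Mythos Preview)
Your proposal is correct and follows essentially the same route as the paper: reduce membership in $W$ to a one-dimensional recursion $c_{n+1}=R(c_n)$ on critical points (your $g$ is the paper's $R$, your $\Psi$ is the paper's $(A_0,A_1)$), identify the feasible domain, and use that $R$ is expanding there to conclude that only the constant orbit at the fixed point $c^{*}$ survives. The paper handles the expansion estimate $R'>1$ by explicit formulas and a graph rather than the analytic argument you outline, but the structure and the acknowledged computational crux are the same.
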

\begin{proof}
Let $\sigma : \mathbb{N} \rightarrow \Delta$ be proper scaling data such that
$f_{\sigma}$ is infinitely renormalizable. Let $c_n$ be the critical point of
$f_{s^n(\sigma)}$. Then
\begin{eqnarray}
q_{c_n}(0) & = & 1-\sigma_1(n) \label{cond1}\\
q_{c_n}(1-\sigma_1(n)) & = & \sigma_0(n) \label{cond2}\\
c_{n+1} & = & \frac{\sigma_0(n)-c_n}{\sigma_0(n)}. \label{cond3}
\end{eqnarray}
We also have the conditions
\begin{eqnarray}
\sigma_0(n), \sigma_1(n) & > & 0 \label{cond4}\\
\sigma_0(n) + \sigma_1(n) & < & 1 \label{cond5} \\
0 < c_n & < & \frac{1}{2} \label{cond6}
\end{eqnarray}
From conditions $(\ref{cond1}), (\ref{cond2}) \;\text{and} \;(\ref{cond3})$ we get
\begin{eqnarray}
\sigma_0(n) & = & \frac{2c_n^2-6c_n^3+5c_n^4-2c_n^5}{(c_n-1)^6} \equiv A_0(c_n)\\
\sigma_1(n) & = & \frac{c_n^2} {(c_n-1)^2} \equiv A_1(c_n)\\
c_{n+1} & =  &
\frac{c_n^6-6c_n^5+17c_n^4-25c_n^3+21c_n^2-8c_n+1}{2c_n^4-5c_n^3+6c_n^2-2c_n}
\equiv  R(c_n)  \label{cnew}
\end{eqnarray}

\begin{figure}[ht]
\centering \psfrag{s0}[c][c][0.8][0]{${A_0(c)}$}
\psfrag{c}[c][c][1][0]{${c}$}
\includegraphics [scale=0.5] {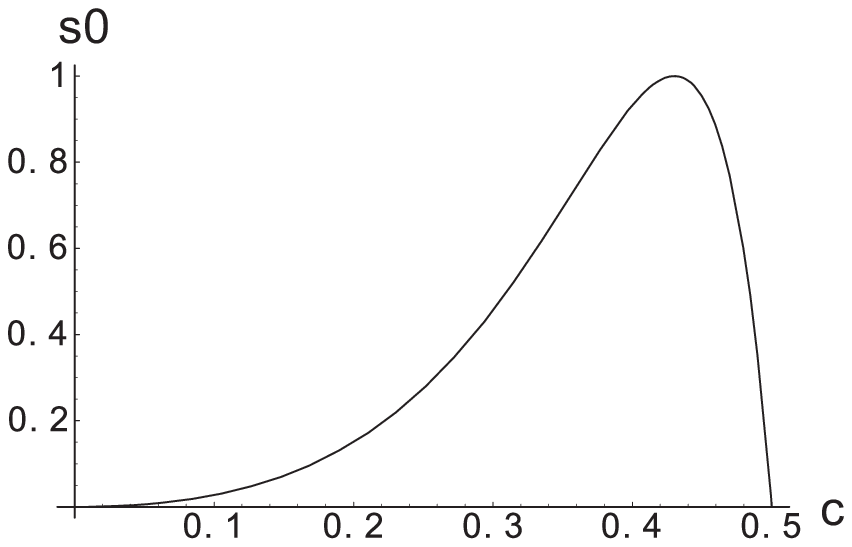} \hspace{2cm}
\psfrag{s1}[c][c][0.8][0]{${A_1(c)}$} \psfrag{C}[c][c][1][0]{${c}$}
\includegraphics [scale=0.5] {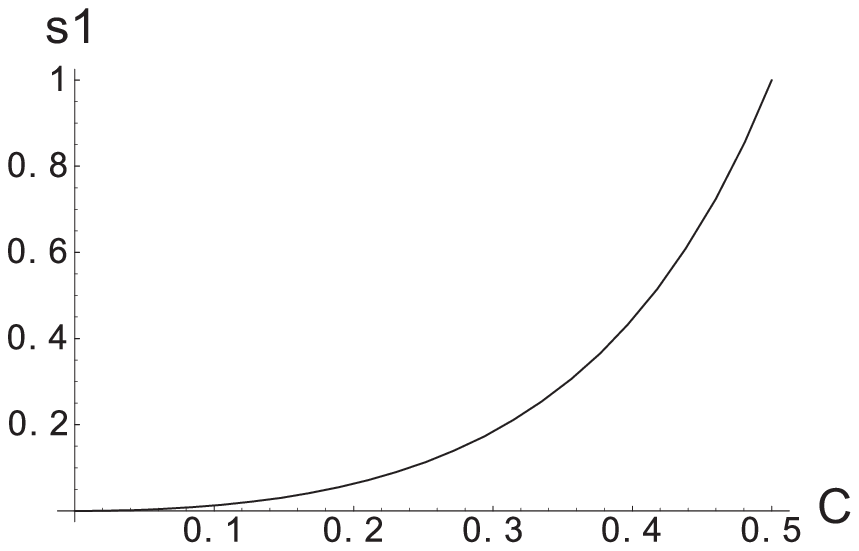}
\end{figure}


\begin{figure}[ht]
\centering \psfrag{SS}[c][c][0.8][0]{${A_0(c)+A_1(c)}$}
\psfrag{c}[c][c][1][0]{${c}$}
\psfrag{C}[c][c][1.25][0]{${C}$}
\includegraphics[scale=0.75]{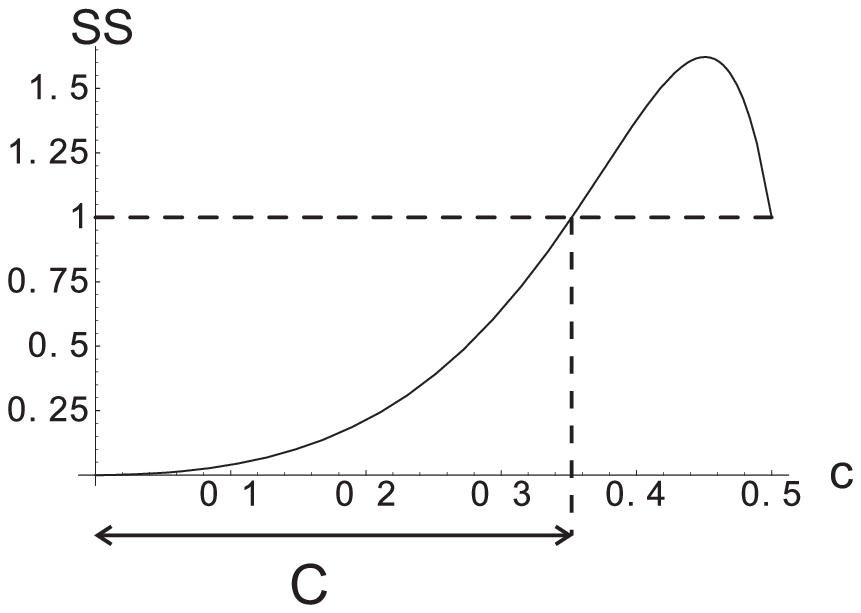}
\caption{The graphs of $A_0, A_1 \;\text{and} \;A_0+A_1$}
\end{figure}


The conditions $(\ref{cond4}), (\ref{cond5}) \;\text{and} \;(\ref{cond6})$ reduces to
$c \in (0, 1/2)$ and \break
$A_0(c)+A_1(c) <1$. In particular this lets the feasible domain be:
\begin{eqnarray*}
C & = & \left \{ c \in (0, \;1/2) \;:\; 0 \leq
\frac{c^2(3-10c+11c^2-6c^3+c^4)}{(c-1)^6} < 1 \right\}\\
& = & [0, \;0.35...]
\end{eqnarray*}

\begin{figure}[ht]
\centering
\psfrag{R}[c][c][1][0]{${R}$}
\psfrag{c}[c][c][1][0]{${c}$}
\psfrag{C}[c][c][1][0]{${C}$}
\psfrag{Cs}[c][c][1][0]{${c^{*}}$}
\includegraphics[scale=0.75]{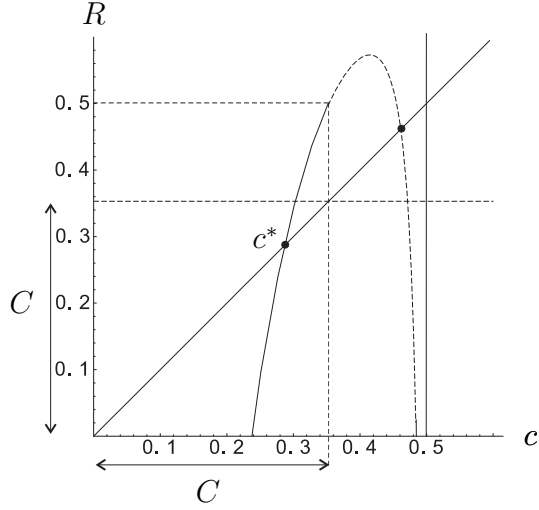}
\caption{$R:C \rightarrow \mathbb{R}$}
\end{figure}


Notice that the map $R: C \rightarrow \mathbb{R}$ is expanding. It 
follows readily that only the
fixed point $c^{*} \in C$ and $R(c^{*}) = c^{*}$ corresponds to an infinitely
renormalizable
$f_{\sigma^{*}}.$ Otherwise speaking, consider the scaling data 
$\sigma^{*}: \mathbb{N}
\rightarrow \Delta$ with
$$\sigma^{*}(n) = \left( q_{c^*}^2(0), \; 1-q_{c^{*}}(0)\right), \; n \geq 1.$$
Then $s(\sigma^{*})= \sigma^{*}$ and
Lemma $\ref{lem1}$ implies
$$\mathrm{R}f_{\sigma^{*}} = f_{\sigma^{*}}.$$
\end{proof}
\begin{rem}
Let $I_0^n = [x_{n-1}, x_n]$ be the interval corresponding to $\sigma^{*}$ then
$$f_{\sigma^{*}}(x_{n-1}) = q_{c^{*}}(x_{n-1}).$$
Hence $f_{\sigma^{*}}$ has a quadratic tip.
\end{rem}

\begin{rem}
The invariant Cantor set of the map $f_{\sigma^{*}}$ is next in complexity to the well known middle
third Cantor set in the following sense:

- like in the middle third Cantor set, on each scale and everywhere the same scaling ratios are used,

- but unlike in the middle third Cantor set, there are now two ratios (a small one and a bigger one) at each scale .

\noindent
This situation of rather extreme tameness of the scaling data is very different from the geometry of the Cantor attractor of the analytic renormalization fixed point in which there are no two places where the same scaling ratios are used at all scales, and where the closure of the set of ratios is itself a Cantor set \cite{BMT}.
\end{rem}

\begin{lem} \label{lip-factor}
Let $f_{*} = f_{\sigma^{*}}$ where $\sigma^{*}: \mathbb{N}
\rightarrow \Delta$ is the scaling data with $\sigma^{*}(n)(\sigma_0^{*}, \sigma_1^{*})$. Then
$$(\sigma_0^{*})^2 = \sigma_1^{*}.$$  \label{sigsq-sig}
\end{lem}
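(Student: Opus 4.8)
The plan is to read off closed‑form expressions for $\sigma_0^*$ and $\sigma_1^*$ in terms of the critical point $c^*$ directly from the relations defining the fixed‑point scaling data, and then to observe that one is the square of the other.

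First I would fix the bookkeeping. By the Proposition, $f_* = f_{\sigma^*}$ is the unique infinitely renormalizable map in $W$, its scaling data is shift‑invariant, $s(\sigma^*)=\sigma^*$, and the critical points $c_n$ of the maps $f_{s^n(\sigma^*)}$ all coincide with $c^*$, the unique fixed point of $R$ in $C\subset(0,1/2)$; in particular $0<c^*<1$, so $1-c^*\neq0$. Since $\sigma^*$ is constant, the relations $(\ref{cond1})$ and $(\ref{cond3})$ established in the proof of the Proposition hold with $c_n$, $\sigma_0(n)$, $\sigma_1(n)$ replaced throughout by $c^*$, $\sigma_0^*$, $\sigma_1^*$.

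Then I would run the two short computations. From $(\ref{cond3})$ we get $c^* = (\sigma_0^*-c^*)/\sigma_0^*$, i.e. $\sigma_0^*(1-c^*)=c^*$, hence
\[
  \sigma_0^* = \frac{c^*}{1-c^*}.
\]
From $(\ref{cond1})$ together with the explicit formula $q_{c^*}(x) = 1-\bigl(\tfrac{x-c^*}{1-c^*}\bigr)^2$, evaluated at $x=0$,
\[
  1-\sigma_1^* \;=\; q_{c^*}(0) \;=\; 1-\Bigl(\frac{c^*}{1-c^*}\Bigr)^2,
  \qquad\text{so}\qquad
  \sigma_1^* = \Bigl(\frac{c^*}{1-c^*}\Bigr)^2 .
\]
Comparing the two displays gives $\sigma_1^* = (\sigma_0^*)^2$, which is the assertion.

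I do not anticipate any genuine obstacle: the argument is a two‑line substitution once it is clear which quantities are constant along the orbit, and the only point to be careful about is that $c^*\neq1$ (immediate from $c^*\in C\subset(0,1/2)$), so that the division producing $\sigma_0^* = c^*/(1-c^*)$ is legitimate. A heavier alternative would be to substitute the closed forms $\sigma_0^* = A_0(c^*)$, $\sigma_1^* = A_1(c^*)$ and verify that $A_0(c)^2 = A_1(c)$ holds modulo the equation $R(c^*)=c^*$; this reduces, after clearing denominators, to a polynomial identity and also works, but it is longer and is not needed here, since $(\ref{cond3})$ already packages the fixed‑point equation in exactly the form required.
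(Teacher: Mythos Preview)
Your proof is correct and complete: at the fixed point the relations (\ref{cond1}) and (\ref{cond3}) specialize to $\sigma_1^* = (c^*/(1-c^*))^2$ and $\sigma_0^* = c^*/(1-c^*)$, and the identity drops out by substitution.

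The paper, however, takes a different and more geometric route. Rather than quoting the algebraic conditions from the proof of the Proposition, it reads $\sigma_0^*$ and $\sigma_1^*$ as the scaling ratios $|I_0^{n+1}|/|I_0^n|$ and $|\hat I_0^{n+1}|/|\hat I_0^n|$ respectively (where $\hat I_0^n = q_{c^*}(I_0^n)$), and then uses the quadratic tip relation $f_*(x_n)=q_{c^*}(x_n)$ to conclude that the second ratio is the square of the first. Your approach is shorter and purely computational; the paper's is a bit longer but makes transparent the geometric reason for the identity---the vertical contraction of the scaling map $S$ equals the square of the horizontal one precisely because $q_{c^*}$ is quadratic---which is exactly what is exploited immediately afterwards in the $C^{1+Lip}$ extension (Lemma~\ref{scaling-qud-tip} and the Lipschitz estimate). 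So the paper's proof doubles as preparation for what follows, while yours is a clean standalone verification.
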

\begin{proof}
Let $\hat{I}_0^n =f_{*}(I_0^n)=[f_{*}(x_{n-1}), \; 1]$ and
$\hat{I}_1^{n+1} =f_{*}(I_1^{n+1})$. Then
$f_{*}^{2^n-1}:\hat{I}_0^n \rightarrow I_0^n$ is affine, monotone
and onto. Further, by construction
$$f^{2^n-1}(\hat{I}_0^{n+1}) = I_1^{n+1}.$$
Hence, $$\frac{|\hat{I}_0^{n+1}|}{|\hat{I}_0^n|} = \sigma_1^{*}.$$
So $|I_0^n| = (\sigma_0^{*})^n$ and $|\hat{I}_0^n| =(\sigma_1^{*})^n$. Now
$f_{\sigma^{*}}$ has
a quadratic tip with
$$f_{\sigma^{*}}(x_n)=q_{c_{*}}(x_n).$$
Hence,
$$\sigma_1^{*} = \frac{|\hat{I}_0^{n+1}|}{|\hat{I}_0^n|} = \left( 
\frac{x_n-c}{x_{n-1}-c} \right)^2 = \left( \frac{|I_0^{n+1}|}{|I_0^n|} 
\right)^2 = \left( \sigma_0^{*} \right)^2.$$ 
This completes the proof.
\end{proof}

\subsection{$C^{1+Lip}$ extension}
In this sub-section we will extend the piece-wise affine map $f_{*}$ to a 
$C^{1+Lip}$ unimodal map.
Let $S:[0, 1]^2 \rightarrow [0, 1]^2$ be the scaling function defined by
\begin{displaymath}
S \left( \begin{array}{cc}
x \\
y
\end{array} \right) 
= \left( \begin{array}{cc}
-\sigma_{0}^{*}x+\sigma_{0}^{*} \\
\sigma_{1}^{*}y+1-\sigma_{1}^{*}
\end{array} \right) 
\equiv  \left( \begin{array}{cc}
S_1(x)\\
S_2(y)
\end{array} \right)
\end{displaymath}
and let $F$ be the graph of  $f_{*} = f_{\sigma^{*}}$,  where  $f_{\sigma^{*}}: D_{\sigma^{*}} \rightarrow [0,1]$, \break
$D_{\sigma^{*}} = \cup_{n \geq 1} I_1^n.$ Then the idea of how to 
construct an
extension $g$ of $f_{*}$ is contained in the following lemma:
\begin{lem} $F \cap \;image(S) = S(F)$. \label{scaling-graph}
\end{lem}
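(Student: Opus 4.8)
Lemma \ref{scaling-graph} asserts that the part of the graph $F$ of $f_*$ that lies inside the rectangle $\mathrm{image}(S)=S_1([0,1])\times S_2([0,1])$ is precisely the image under $S$ of the whole graph $F$. The plan is to prove the two inclusions $S(F)\subset F\cap\mathrm{image}(S)$ and $F\cap\mathrm{image}(S)\subset S(F)$ separately, using the self-similar description of $f_*=f_{\sigma^*}$ coming from the fixed-point equation $\mathrm{R}f_{\sigma^*}=f_{\sigma^*}$ together with the affine-extension definition of $f_\sigma$ on each piece $I_1^n$.

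\emph{First inclusion.} Since $S$ maps $[0,1]^2$ into $S_1([0,1])\times S_2([0,1])=\mathrm{image}(S)$, it is automatic that $S(F)\subset\mathrm{image}(S)$, so the content is $S(F)\subset F$. I would unwind the definitions: a point of $F$ is $(x,f_*(x))$ with $x\in D_{\sigma^*}=\bigcup_{n\ge1}I_1^n$; applying $S$ sends it to $(S_1(x),S_2(f_*(x)))$. Now $S_1=\tilde\sigma_0^{*}(1)$ is exactly the first generating map $\tilde\sigma_0(1)$ used in building the nested intervals, so $S_1(I_1^n)=I_1^{n+1}$ for every $n\ge1$ (this is immediate from the formulas $I_1^{n+1}=\tilde\sigma_0(1)\circ\tilde\sigma_0(2)\circ\dots\circ\tilde\sigma_0(n)\circ\tilde\sigma_1(n+1)([0,1])$ together with $s(\sigma^*)=\sigma^*$). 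Hence $S_1(x)\in\bigcup_{n\ge2}I_1^n\subset D_{\sigma^*}$, so $(S_1(x),S_2(f_*(x)))$ is a candidate point on $F$ provided $S_2(f_*(x))=f_*(S_1(x))$. To see this, note that on each $I_1^n$ the map $f_*$ is the affine interpolation of $q_{c^*}$ at the endpoints $\partial I_1^n$, and $f_*$ on $I_1^{n+1}$ is the affine interpolation of $q_{c^*}$ at $\partial I_1^{n+1}=S_1(\partial I_1^n)$. The conjugacy relation defining the renormalization fixed point — concretely, the relation $\hat h_{\sigma^*,1}^{-1}\circ f_*\circ h_{\sigma^*,1}=f_*$ coming from Lemma \ref{lem1}, together with $S_2=\hat h_{\sigma^*,1}$ being the affine orientation preserving map onto $\hat I_0^1=f_*(I_0^1)$ — forces $f_*\circ S_1=S_2\circ f_*$ on all of $D_{\sigma^*}$. (That $S_2$ is exactly $\tilde\sigma_1^{*}$-type, i.e. $S_2(y)=\sigma_1^*y+1-\sigma_1^*$, and that $\hat I_0^1$ has the right length, is where Lemma \ref{lip-factor}'s identity $(\sigma_0^*)^2=\sigma_1^*$ and the length computations $|I_0^n|=(\sigma_0^*)^n$, $|\hat I_0^n|=(\sigma_1^*)^n$ get used.) This gives $S(F)\subset F$.

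\emph{Second inclusion.} Conversely, take $(x,f_*(x))\in F$ with $(x,f_*(x))\in\mathrm{image}(S)$, i.e. $x\in S_1([0,1])=[0,\sigma_0^*]=I_0^1$ and $f_*(x)\in S_2([0,1])=\hat I_0^1$. Since $x\in D_{\sigma^*}\cap I_0^1$ and $D_{\sigma^*}\cap I_0^1=\bigcup_{n\ge2}I_1^n=S_1(D_{\sigma^*})$, we may write $x=S_1(x')$ with $x'\in D_{\sigma^*}$, hence $x'$ is uniquely determined ($S_1$ is an affine bijection). Then $f_*(x)=f_*(S_1(x'))=S_2(f_*(x'))$ by the commuting relation already established, so $(x,f_*(x))=S(x',f_*(x'))\in S(F)$. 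Combining the two inclusions yields $F\cap\mathrm{image}(S)=S(F)$.

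\emph{Main obstacle.} The routine parts are the interval bookkeeping ($S_1(I_1^n)=I_1^{n+1}$, $S_1(D_{\sigma^*})=D_{\sigma^*}\cap I_0^1$) and the observation that $S_1,S_2$ coincide with the canonical affine maps $\tilde\sigma_0^*,\hat h_{\sigma^*,1}$. The one point deserving genuine care is the identity $f_*\circ S_1=S_2\circ f_*$ on $D_{\sigma^*}$: it must be checked that the \emph{affine-interpolation} definition of $f_*$ on each piece is compatible with the conjugacy, which is really a restatement that $f_{\sigma^*}$ is a fixed point of renormalization (Lemma \ref{lem1} with $s(\sigma^*)=\sigma^*$) combined with the fact that the renormalization uses precisely the affine rescalings $h_{\sigma^*,1}=\tilde\sigma_0^*$ and $\hat h_{\sigma^*,1}$. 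I expect this verification — matching the piecewise-affine values at $\partial I_1^n$ against $q_{c^*}$ at $\partial I_1^{n+1}=S_1(\partial I_1^n)$ and using $f_{\sigma^*}(x_n)=q_{c^*}(x_n)$ plus Lemma \ref{lip-factor} — to be the crux, though it is still short.
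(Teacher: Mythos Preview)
Your proof is correct and follows essentially the same approach as the paper: both identify $S_1=h_{\sigma^*,1}$ and $S_2=\hat h_{\sigma^*,1}$, derive the commuting relation $f_*\circ S_1=S_2\circ f_*$ from Lemma~\ref{lem1} (that is, $R_1 f_{\sigma^*}=f_{\sigma^*}$), and then read off both inclusions. One minor point: your appeal to Lemma~\ref{lip-factor} is unnecessary---the identification $S_2=\hat h_{\sigma^*,1}$ requires only $q_{c^*}(0)=1-\sigma_1^*$ (equation~(\ref{cond1})), not the identity $(\sigma_0^*)^2=\sigma_1^*$, and indeed the paper proves the present lemma before Lemma~\ref{lip-factor}.
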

\begin{proof} Let $\hat{h} = \hat{h}_{\sigma^{*}, 1}$ and $h = h_{\sigma^{*}, 1}$.
Let $(x, y) \in \;graph(f_{*}) \;\cap \;image(S)$. Say 
$(x, y) = \left(S_1(x^{\prime}), S_2(y^{\prime}) \right)$ with 
$S_2(y^{\prime}) = f_{*}(S_1(x^{\prime}))$. Since $S_1(x^{\prime}) = 
h(x^{\prime})$ and $S_2(y^{\prime}) = \hat{h}(y^{\prime})$, we can
write
$y^{\prime} = \hat{h}^{-1}\circ f_{*} \circ h(x^{\prime})$. By Lemma $\ref{lem1}$
$$y^{\prime}= R_1f_{*}(x^{\prime}) = f_{*}(x^{\prime}),$$
which gives $(x^{\prime}, y^{\prime}) \in graph (f_{*})$. This in 
turn implies $(x, y) \in S(graph
f_{*})$. By reading the previous argument backward we prove $S(\text{graph}\; f_{*}) \subset F \cap image(S)$.
\end{proof}
\begin{lem} $S(graph \;q_{c^{*}}) \subset \;graph(q_{c^{*}}).$ \label{scaling-qud-tip}
\end{lem}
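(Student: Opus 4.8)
The plan is to verify directly that $S$ maps the parabola $y = q_{c^*}(x)$ into itself, using the explicit form of $q_{c^*}$ and the relation $(\sigma_0^*)^2 = \sigma_1^*$ from Lemma \ref{sigsq-sig}. Recall $q_{c^*}(x) = 1 - \left(\frac{x-c^*}{1-c^*}\right)^2$ and $S_1(x) = \sigma_0^*(1-x)$, $S_2(y) = 1 - \sigma_1^*(1-y)$. Starting from a point $(x,q_{c^*}(x))$ on the graph, I would compute $S_2(q_{c^*}(x)) = 1 - \sigma_1^*\left(\frac{x-c^*}{1-c^*}\right)^2$ and check that this equals $q_{c^*}(S_1(x)) = 1 - \left(\frac{\sigma_0^*(1-x)-c^*}{1-c^*}\right)^2$. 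So the identity to establish is
\begin{equation*}
\sigma_1^* (x-c^*)^2 = \bigl(\sigma_0^*(1-x) - c^*\bigr)^2 .
\end{equation*}
Taking square roots (with the appropriate sign, since $S_1$ is orientation-reversing and maps $[0,1]$ into $[0,\sigma_0^*] \ni c^*$), this is equivalent to $\sigma_0^* \, (x-c^*) = c^* - \sigma_0^*(1-x)$ up to sign, i.e.\ to an affine relation between the fixed point $c^*$ and $\sigma_0^*$.

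The key observation is that this affine relation is exactly condition (\ref{cond3}) at the fixed point: $c_{n+1} = \frac{\sigma_0(n) - c_n}{\sigma_0(n)}$ with $c_n = c_{n+1} = c^*$ and $\sigma_0(n) = \sigma_0^*$ gives $\sigma_0^* c^* = \sigma_0^* - c^*$, equivalently $c^* = \frac{\sigma_0^*}{1+\sigma_0^*}$. I would substitute this into the displayed quadratic identity: the left side becomes $\sigma_1^*(x - c^*)^2 = (\sigma_0^*)^2 (x-c^*)^2$ by Lemma \ref{sigsq-sig}, and a short computation shows $\sigma_0^*(1-x) - c^* = \sigma_0^*(1-x) - \frac{\sigma_0^*}{1+\sigma_0^*} = \frac{\sigma_0^*}{1+\sigma_0^*}\bigl((1+\sigma_0^*)(1-x) - 1\bigr) = \frac{\sigma_0^*}{1+\sigma_0^*}(\sigma_0^* - x - \sigma_0^* x) $; comparing with $\sigma_0^*(x - c^*) = \sigma_0^*\bigl(x - \frac{\sigma_0^*}{1+\sigma_0^*}\bigr) = \frac{\sigma_0^*}{1+\sigma_0^*}\bigl((1+\sigma_0^*)x - \sigma_0^*\bigr)$ one sees these differ exactly by a sign, so their squares agree. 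This proves the pointwise identity $S(x, q_{c^*}(x)) = (S_1(x), q_{c^*}(S_1(x)))$, hence $S(\mathrm{graph}\, q_{c^*}) \subset \mathrm{graph}\, q_{c^*}$.

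There is essentially no serious obstacle here — this is a routine algebraic verification — but the one point requiring care is the sign in the square-root step, which is why I would keep track of orientations: $S_1$ reverses orientation and sends $[0,1]$ onto $[0,\sigma_0^*]$, and $c^* < \sigma_0^*$ lies in the feasible domain $C = [0, 0.35\ldots]$, so both $x - c^*$ and $S_1(x) - c^* = \sigma_0^*(1-x) - c^*$ range over intervals straddling $0$ in opposite orientations, which is precisely what forces the minus sign and makes the squares coincide. Alternatively, and perhaps cleaner for the writeup, I would avoid square roots entirely and just expand both sides of $\sigma_1^*(x-c^*)^2 = (\sigma_0^*(1-x)-c^*)^2$ as polynomials in $x$ after substituting $c^* = \sigma_0^*/(1+\sigma_0^*)$ and $\sigma_1^* = (\sigma_0^*)^2$, checking that all three coefficients match.
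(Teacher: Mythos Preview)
Your proof is correct. You use the same two ingredients as the paper --- the relation $(\sigma_0^*)^2 = \sigma_1^*$ from Lemma~\ref{sigsq-sig} and the fixed-point form of condition~(\ref{cond3}), equivalently $c^* = \sigma_0^*/(1+\sigma_0^*)$ or $\sigma_0^*(1-c^*)=c^*$ --- and carry out a direct pointwise verification of $S_2(q_{c^*}(x)) = q_{c^*}(S_1(x))$.

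The paper's argument is organized a bit differently: it observes that $S$ sends the graph of $q_{c^*}$ to the graph of some quadratic $q$ (since $S$ is affine), and then identifies $q$ with $q_{c^*}$ by matching three pieces of data --- the value at $c^*$ (using $S(c^*,1)=(c^*,1)$, which hides the fixed-point relation), the value at $0$ (using $S(1,0)=(0,q_{c^*}(0))$), and the second derivative (using $(\sigma_0^*)^2=\sigma_1^*$). Your approach is more explicit and self-contained; the paper's is slightly more conceptual in packaging the same computation as ``two quadratics agreeing in three independent conditions''. Either way the content is the same, and your suggestion at the end --- to bypass the sign discussion by simply expanding both sides as polynomials in $x$ --- is the cleanest way to write it up.
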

\begin{proof}
Let $S(graph(q_{c^{*}}))$ be the graph of the function $q$. Since
$S$ is linear and $q_c$ is quadratic we get that $q$ is also a
quadratic function. Then both $q_{c^{*}}(c^{*})=1$ and
$q(c^{*})=1$, because of $S(c^{*},1) =(c^{*},1)$. Furthermore, by
construction
$$S(1, 0)=(0, q_{c^{*}}(0)) = (0,q(0)).$$
Hence  $q_{c^{*}}(0) = q(0)$. Differentiate twice $S_2(y) = q(S_1(x))$ and use $(\sigma_0^{*})^2 = \sigma_1^{*}$ from
Lemma
$\ref{sigsq-sig}$, which proves $q^{''}(c^{*}) = q_{c^{*}}^{''}(c^{*})$. Now we conclude that the quadratic maps
$q$ and $q_{c^*}$ are equal.
\end{proof}

Let $F_0$ be the graph of $f_{*}|_{I_{1}^{1}}$. Then by Lemma $\ref{scaling-graph}$,  $F = \cup_{k \geq 0}
S^{k}(F_0)$. Let $g$ be a $C^{1+Lip}$
extension of $f_{*}$ on $D_{\sigma_{*}} \cup [x_1, 1]$ and $G_0 = graph \;(g|_{[x_1, \;1]}).$ Then
$G = \cup_{k \geq 0} S^k(G_0)$ is the graph of an  extension of $f_*$. We prove that $g$ is $C^{1+Lip}$
and also has a quadratic tip. Let $B^k = S^k([0, 1]^2)$, where
\begin{eqnarray*}
B^k & = & [x_{k-1}, \;x_{k}] \times [\hat{x}_{k-1}, \;1] \qquad  \text{for} \;k=1,3,5,\dots \\
B^k & = & [x_{k}, \;x_{k-1}] \times [\hat{x}_{k-1}, \;1] \qquad \text{for} \;k=2,4,\dots
\end{eqnarray*}
where $\hat{x}_{k-1} = q_c(x_{k-1}) = 1-(\sigma_1^{*})^k$. Let $b_n =\left( x_{n-1}, \hat{x}_{n-1} \right) = S^n(1, 0)$.

\begin{rem}
Notice that the points $b_n$ lie on the graph of $q_{c^*}$. This follows from Lemma
$\ref{scaling-qud-tip}$.
\end{rem}

\begin{figure}[ht]
\centering \psfrag{B0}[c][c][1][0]{${B_0}$}
\psfrag{B1}[c][c][1][0]{${B_1}$}\psfrag{B2}[c][c][1][0]{${B_2}$}
\psfrag{B3}[c][c][1][0]{${B_3}$} \psfrag{B4}[c][c][1][0]{${B_4}$}
\psfrag{b1}[c][c][1][0]{${b_1}$}\psfrag{b2}[c][c][1][0]{${b_2}$}
\psfrag{b3}[c][c][1][0]{${b_3}$} \psfrag{b4}[c][c][1][0]{${b_4}$}
\psfrag{G0}[c][c][1][0]{${G_0}$}\psfrag{G1}[c][c][1][0]{${G_1}$}
\psfrag{X0}[c][c][1][0]{${x_0}$} \psfrag{X1}[c][c][1][0]{${x_1}$}
\psfrag{X2}[c][c][1][0]{${x_2}$} \psfrag{X3}[c][c][1][0]{${x_3}$}
\psfrag{x0c}[c][c][1][0]{${\hat{x}_0}$}
\psfrag{x1c}[c][c][1][0]{${\hat{x}_1}$}
\psfrag{x2c}[c][c][1][0]{${\hat{x}_2}$}
\includegraphics[scale=1]{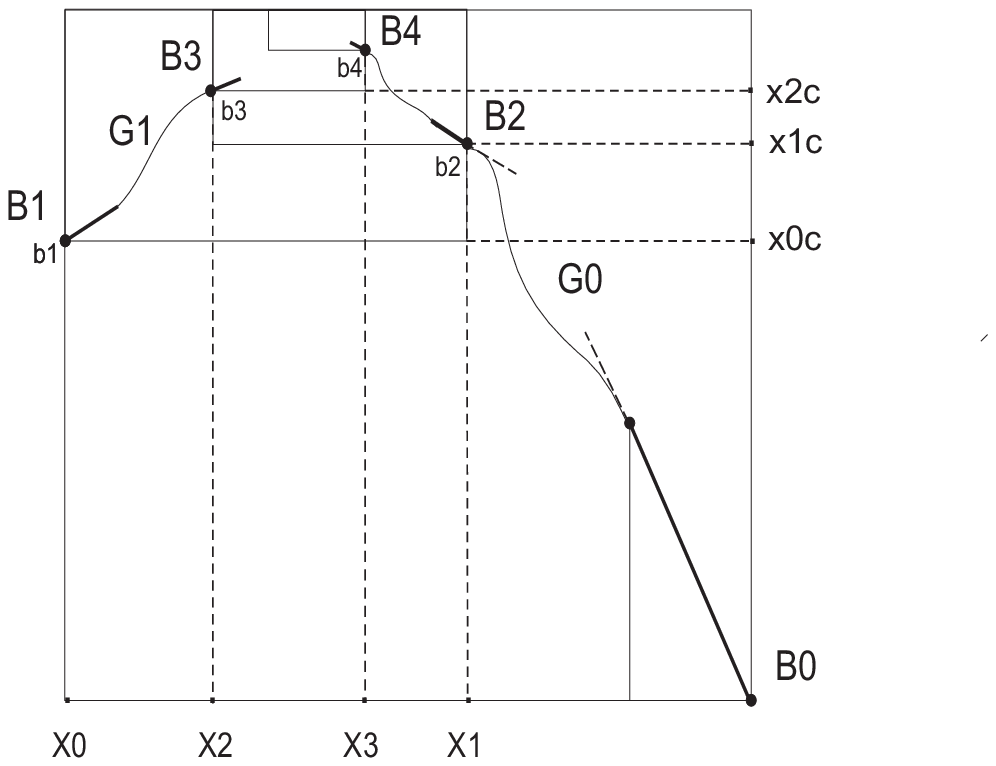}
\caption{extension of $f_{\sigma_{*}}$}
\end{figure}


\begin{lem} G is the graph of a $C^1$ extension of $f_{*}$.
\end{lem}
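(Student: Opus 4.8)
The plan is to show that $g$, defined via $G=\bigcup_{k\ge 0}S^k(G_0)$, is well-defined, continuous, and $C^1$, with matching one-sided derivatives across the countably many interface points $x_k$ and across the critical point $c^*=\lim x_k$. First I would fix an arbitrary $C^1$ extension $g$ of $f_*|_{D_{\sigma^*}}$ to the fundamental domain $[x_1,1]$, subject only to the boundary-jet constraints at $x_0=0$ and $x_1$ forced by the piecewise-affine data of $f_*$ (namely the values $f_*(x_0),f_*(x_1)$ and the already-prescribed affine slope of $f_*$ on $I_1^1$ at $x_1$, together with $f_*(x_0)=q_{c^*}(0)$ at the endpoint $0$); such an extension exists since we are only prescribing a $1$-jet at two points. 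Then $G_0=\mathrm{graph}(g|_{[x_1,1]})$ and the sets $B^k=S^k([0,1]^2)$ tile the region between $\mathrm{graph}\,q_{c^*}$ and the triangle, with $B^k\cap B^{k+1}$ the vertical segment over $x_k$, so $G$ is the graph of a single function on $[0,1]$ provided the pieces agree on these overlaps.

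The key steps, in order: (1) Observe $G\cap\mathrm{image}(S)=S(G)$ by the same backward-and-forward reading used in Lemma~\ref{scaling-graph}, so $g$ on $\bigcup_{k\ge1}S^k([x_1,1])$ is determined by the single relation $g = S_2\circ g\circ S_1^{-1}$ wherever both sides make sense; this is exactly the self-similarity $g\circ h = \hat h\circ g$ with $h=h_{\sigma^*,1}$, $\hat h=\hat h_{\sigma^*,1}$, i.e. $\mathrm R g = g$ at the level of graphs. (2) Continuity: on each $B^k$, $g$ is a homeomorphic image under the affine map $S^k$ of the graph of $g|_{[x_1,1]}$, hence continuous; at an interface $x_k$ the two adjacent definitions both yield $S^k(g|_{[x_1,1]})$ evaluated at the shared corner $b_{k+1}$ resp. $S^{k-1}$ applied to the $x_1$-endpoint — these coincide precisely because $g(x_1)$ on the fundamental domain was chosen to equal $f_*(x_1)=q_{c^*}(x_1)$ and because $b_{k}$ lies on $\mathrm{graph}\,q_{c^*}$ (the remark following the box figure). (3) $C^1$ at the interfaces $x_k$, $k\ge1$: differentiating the relation $S_2\circ g = g\circ S_1$ gives $\sigma_1^*\,g'(S_1(x)) = -\sigma_0^*\,g'(x)$, so the one-sided derivatives at $x_k$ from the two sides are related by a fixed ratio $-\sigma_1^*/\sigma_0^* = -\sigma_0^*$ (using $(\sigma_0^*)^2=\sigma_1^*$ from Lemma~\ref{sigsq-sig}); I must check the initial matching at $x_1$ between $g$ on $[x_1,1]$ and its first $S$-image, which again reduces to the prescribed $1$-jet of $g$ at $x_1$ agreeing with the affine slope of $f_*$ on $I_1^1$ — true by the choice of extension — and then the ratio relation propagates the matching to every $x_k$. (4) $C^1$ at the critical point $c^*$: since $x_k\to c^*$ and $|I_0^k|=(\sigma_0^*)^k\to0$, the one-sided derivative of $g$ from the left of $c^*$ is $\lim_{k}$ of the slopes on $B^k$, which by the scaling relation equals $(-\sigma_0^*)^k\cdot\big(\text{slope of }g\text{ on }[x_1,1]\text{ near the corresponding point}\big)$; because the $b_n$ lie on $\mathrm{graph}\,q_{c^*}$ and $q_{c^*}'(c^*)=0$, these slopes tend to $0$, matching $g'(c^*)=0=q_{c^*}'(c^*)$ from the other side, so $g$ is differentiable at $c^*$ with derivative $0$ and a quadratic tip.

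The main obstacle I anticipate is step (4): controlling the derivative of $g$ as one approaches $c^*$ through the infinitely many scales. One must show not merely that the slopes on the boxes $B^k$ tend to $0$ but that the derivative $g'$ extends continuously to $c^*$ from both sides with common value $0$; this requires the scaling identity $(\sigma_0^*)^2=\sigma_1^*$ (so that the contraction rates in the $x$- and $y$-directions are compatible with a quadratic tangency, not a corner or a cusp) together with the fact that the endpoints $b_n$ track $\mathrm{graph}\,q_{c^*}$ exactly. Steps (1)–(3) are essentially bookkeeping around the functional equation $\mathrm R g=g$ and the already-established Lemmas~\ref{scaling-graph}, \ref{scaling-qud-tip}, and \ref{sigsq-sig}; the genuine analytic content is the behavior at the critical point, and it is there that the choice of the quadratic $q_{c^*}$ (rather than any other curve through the $b_n$) is used.
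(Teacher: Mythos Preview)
Your strategy is the paper's strategy: verify $C^1$ at a single base interface and then use the affine self-similarity $S$ to transport that to every other interface, finishing at $c^*$ by observing that the slope contracts geometrically. Two concrete points in your write-up need correction, however.

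First, the combinatorics of the pieces is not what you describe. The boxes $B^k=S^k([0,1]^2)$ are \emph{nested}, not tiling; what tiles $[0,1]\setminus\{c^*\}$ are the \emph{domains} of the graphs $G_k=S^k(G_0)$, and the adjacency pattern is $G_{k-1}\leftrightarrow G_{k+1}$ at $x_k$ (the domains alternate sides of $c^*$ as $k$ increases). In particular the piece adjacent to $G_0$ at $x_1$ is $G_2=S^2(G_0)$, not $S(G_0)$. This matters for your step~(3): immediately to the left of $x_1$ one is on $G_2$, which there coincides with the affine piece $f_*|_{I_1^3}$ (not $f_*|_{I_1^1}$), so the correct boundary condition for the extension is $g'(x_1^+)=(\sigma_1^*/\sigma_0^*)^{2}\cdot\bigl(\text{slope of }f_*|_{I_1^1}\bigr)=(\sigma_0^*)^{2}\cdot\bigl(\text{slope of }f_*|_{I_1^1}\bigr)$, not the slope on $I_1^1$ itself.

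Second, your ``ratio'' formulation in step~(3) is garbled: the relation $g\circ S_1=S_2\circ g$ differentiates to $-\sigma_0^*\,g'(S_1(x))=\sigma_1^*\,g'(x)$, not what you wrote, and the conclusion is not that the two one-sided derivatives at $x_k$ differ by the fixed factor $-\sigma_0^*$ (that would prevent matching), but that \emph{both} one-sided derivatives at $x_k$ are obtained from the corresponding ones at $x_{k-1}$ by multiplication by the same factor. The paper phrases this more cleanly and geometrically: a neighborhood of $b_n$ in $G$ is exactly the $S$-image of a neighborhood of $b_{n-1}$ in $G$, so once $G$ is $C^1$ at $b_2$ (by the choice of extension), it is $C^1$ at every $b_n$ since affine maps send $C^1$ graphs to $C^1$ graphs. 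Your step~(4) is correct and matches the paper: since $\sigma_1^*<\sigma_0^*$ (a consequence of $(\sigma_0^*)^2=\sigma_1^*$), the slopes on $G_n$ tend to $0$, giving continuity of $g'$ at $c^*$.
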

\begin{proof}
Note that $G_k = S^k(G_0)$ is the graph of a $C^1$ function on
$[x_{k-1}, x_{k+1}]$ for $k$ odd and on $[x_{k+1}, x_{k-1}]$
for $k$ is even. To prove the Lemma we need to show continuous
differentiability at the points $b_n$, where these graphs
intersect. By construction $G_0$ is $C^1$ at $b_2$. Namely,
consider a small interval $(x_1-\delta, x_1+\delta)$. Then on the
interval $(x_1-\delta, x_1)$, the slope is given by an affine
piece of $f_{*}$ and on $(x_1, x_1+\delta)$ the slope is given by
the chosen $C^{1+Lip}$ extension. Let $\Gamma \subset G$ be the
graph over this interval $(x_1-\delta, x_1+\delta)$. Then locally
around $b_n$ the graph $G$ equals $S^{n-1}(\Gamma)$. Hence $G$ is
$C^1$ on $[0, 1] \setminus \left\{ c^{*} \right \}$. From Lemma
$\ref{lip-factor}$, notice that the vertical contraction of $S$ is
stronger than the horizontal contraction. This implies that the
slope of $G_n$ tends to zero. Indeed, $G$ is the graph of a
$C^{1}$ function on $[0, 1]$.
\end{proof}

\begin{prop}
Let $g$ be the function whose graph is $G$ then $g$ is $C^{1+Lip}$ with a quadratic tip.
\end{prop}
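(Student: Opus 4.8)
The plan is to promote the $C^1$ conclusion of the preceding lemma to $C^{1+Lip}$, the one genuinely new input being the scaling identity $\sigma_1^*=(\sigma_0^*)^2$ of Lemma~\ref{lip-factor}, and then to read the quadratic tip off directly from the values $g(x_{n-1})=q_{c^*}(x_{n-1})$.

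First I would record the self-similar picture on each piece. Write $g_0$ for the chosen $C^{1+Lip}$ seed on $[x_1,1]$ and $M=\operatorname{Lip}(g_0')$, and let $J_k$ be the horizontal span of $G_k=S^k(G_0)$. Since $S(x,y)=(S_1(x),S_2(y))$ with $S_1,S_2$ affine, $S^k$ acts coordinate-wise as $S_1^{(k)}$ (with $|(S_1^{(k)})'|=(\sigma_0^*)^k$) and $S_2^{(k)}$ (with $|(S_2^{(k)})'|=(\sigma_1^*)^k$), so $G_k$ is the graph over $J_k=S_1^{(k)}([x_1,1])$ of $g|_{J_k}=S_2^{(k)}\circ g_0\circ(S_1^{(k)})^{-1}$. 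Differentiating and invoking $\sigma_1^*=(\sigma_0^*)^2$ gives
\[
(g|_{J_k})'(x)=\pm(\sigma_0^*)^k\,g_0'\bigl((S_1^{(k)})^{-1}x\bigr),
\]
so on $J_k$ the function $(g|_{J_k})'$ is an affine rescaling of $g_0'$, by the factor $(\sigma_0^*)^k$ in amplitude and $(\sigma_0^*)^{-k}$ in the argument; its Lipschitz constant is therefore $(\sigma_0^*)^k\cdot(\sigma_0^*)^{-k}\cdot M=M$, independent of $k$. (The amplitude factor $(\sigma_0^*)^k\to0$ also re-proves $Dg(c^*)=0$.) This $k$-uniformity is the crux: without the exact relation $\sigma_1^*=(\sigma_0^*)^2$ the per-piece Lipschitz constants would not stay bounded and $g$ would fail to be $C^{1+Lip}$ at $c^*$.

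Next I would glue the pieces. Since $J_k\subseteq I_0^k$, the closures $\overline{J_k}$ cover $[0,1]$ and shrink to $c^*=\bigcap_n I_0^n$. From the preceding lemma $g'$ is continuous on all of $[0,1]$, so it suffices to verify the elementary fact that a function continuous on $[0,1]$ and $M$-Lipschitz on each $\overline{J_k}$ is $M$-Lipschitz on $[0,1]$: if $p<q$ and $c^*\notin[p,q]$ then $[p,q]$ meets only finitely many $J_k$ (the rest lie in a small neighbourhood of $c^*$ disjoint from $[p,q]$), so a finite telescoping along the junction points together with the per-piece bound yields $|g'(p)-g'(q)|\le M|p-q|$; and if $c^*\in[p,q]$ one splits the estimate at $c^*$ and lets the argument from each side pass to the limit by continuity. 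Hence $g\in C^{1+Lip}$, with no separate discussion of the corner points $b_n$ needed; and for a monotone choice of $g_0$ (possible since $c^*\notin[x_1,1]$) the map $g$ is increasing on $[0,c^*]$ and decreasing on $[c^*,1]$, hence unimodal with unique critical point $c^*$, $g(c^*)=1$, $g(1)=0$.

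Finally the quadratic tip. By the remark following Lemma~\ref{scaling-qud-tip} the points $b_n=(x_{n-1},\hat x_{n-1})=S^n(1,0)$ lie on the graph of $q_{c^*}$, so $g(x_{n-1})=q_{c^*}(x_{n-1})$; moreover $|x_{n-1}-c^*|\le|I_0^{n-1}|=(\sigma_0^*)^{n-1}\to0$ and, by continuity, $g(c^*)=q_{c^*}(c^*)=1$. Since $q_{c^*}(x)=1-\bigl(\tfrac{x-c^*}{1-c^*}\bigr)^2$,
\[
\frac{g(x_{n-1})-g(c^*)}{(x_{n-1}-c^*)^2}=\frac{q_{c^*}(x_{n-1})-1}{(x_{n-1}-c^*)^2}=-\frac{1}{(1-c^*)^2}
\]
for every $n$, so the limit exists and equals $-A$ with $A=(1-c^*)^{-2}>0$; thus $c^*$ is a quadratic tip of $g$. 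The one delicate point in all of this is the $k$-independence of the Lipschitz bound in the second paragraph; the remainder is bookkeeping once Lemma~\ref{lip-factor} is in hand.
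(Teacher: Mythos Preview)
Your argument is correct and follows essentially the same route as the paper: both proofs show that the Lipschitz constant of the derivative on each piece $G_k=S^k(G_0)$ is bounded independently of $k$, and both isolate the scaling identity $\sigma_1^*=(\sigma_0^*)^2$ of Lemma~\ref{lip-factor} as the reason this works. The paper phrases the computation inductively (showing $\operatorname{Lip}(g_{n+1}')\le \operatorname{Lip}(g_n')$ via $g_{n+1}=S_2\circ g_n\circ S_1^{-1}$), while you unwind $S^k$ directly; the calculations are the same. Your gluing step and your explicit verification of the quadratic tip via $g(x_{n-1})=q_{c^*}(x_{n-1})$ are more detailed than the paper's, which simply notes that $g$ inherits the quadratic tip from $f_*$ and takes the passage from piecewise to global Lipschitz for granted. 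One cosmetic point: the $\overline{J_k}$ tile $[0,1]\setminus\{c^*\}$ rather than all of $[0,1]$, but your argument already accounts for this by splitting at $c^*$ and using continuity of $g'$.
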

\begin{proof}
Since $f_{*}|_{D_{\sigma}}$ has a quadratic tip, the extension $g$ has a quadratic tip. Because $g$ is $C^1$ we only need
to show that $G_n$ is the graph of a $C^{1+Lip}$ function
$$g_n: [x_{n-1}, x_{n+1}] \rightarrow [0, 1]$$ 
with an uniform Lipschitz bound. That is, for $n \geq 1$
$$ Lip(g_{n+1}^{\prime})\leq  Lip(g_n^{\prime}).$$
Assume that $g_n$ is $C^{1+Lip}$ with Lipschitz constant $Lip_n$
for its derivative. We prove that $Lip_{n+1} \leq Lip_n$, and in
particular $Lip_n \leq Lip_0$. For, given $(x, y)$ on the graph of
$g_{n}$ there is $(x^{\prime}, y^{\prime}) = S(x, y)$, on the
graph of $g_{n+1}$. Therefore, we can write
$$g_{n+1}(x^{\prime}) = \sigma_1^{*} \;g_n(x) + 1-\sigma_1^{*}.$$
Since $\displaystyle{x = 1-\frac{x^{\prime}}{\sigma_0^{*}}}$, we have
$$g_{n+1}(x^{\prime}) = \sigma_1^{*} \;g_n \left(
\displaystyle{1-\frac{x^{\prime}}{\sigma_0^{*}}}\right)
+ 1-\sigma_1^{*}.$$
Differentiate,
$$g_{n+1}^{'}(x^{\prime}) = \frac{-\sigma_1^{*}}{\sigma_0^{*}} \; g_n^{'}
\left( \displaystyle{1-\frac{x^{\prime}}{\sigma_0^{*}}}\right).$$
Therefore,
\begin{eqnarray*}
\big| g_{n+1}^{'}(x_1^{\prime}) -g_{n+1}^{'}(x_2^{\prime}) \big| 
& = & \Big| \frac{-\sigma_1^{*}}{\sigma_0^{*}} \Big| \cdot  
\Big| g_n^{'} 
\left(\displaystyle{1-\frac{x_1^{\prime}}{\sigma_0^{*}}}\right)-g_n^{'}
\left(\displaystyle{1-\frac{x_2^{\prime}}{\sigma_0^{*}}}\right) 
\Big| \\
& \leq & \frac{\sigma_1^{*}}{(\sigma_0^{*})^2} \;Lip (g_n^{'}) \; |x_1^{\prime} -x_2^{\prime}|
\end{eqnarray*}
From Lemma $\ref{sigsq-sig}$ we have $\frac{\sigma_1^{*}}{(\sigma_0^{*})^2}=1$. Hence
$$ Lip(g_{n+1}^{'}) \leq Lip(g_n^{'}) \leq Lip(g_1^{'}).$$
which completes the proof.
\end{proof}
\begin{rem}
Notice that if $f_{\sigma}$ is infinitely renormalizable then every extension $g$ is renormalizable in the classical
sense.
\end{rem}
\begin{thm}
There exists an infinitely renormalizable $C^{1+Lip}$ unimodal map
$f$ with a quadratic tip which is not $C^2$ but
$$Rf = f.$$
\end{thm}

\subsection{Entropy of renormalization}
For all $\phi \in C^{1+Lip}$,
$\phi:[x_1,   1] \rightarrow [0, 1]$, which extends $f_{*}$ we constructed $f_\phi \in C^{1+Lip}$
in such a way that
\begin{enumerate}
\item[(i)] $\mathrm{R}f_\phi = f_\phi$
\item[(ii)] $f_{\phi}$ has a quadratic tip.
\end{enumerate}
Now choose two $C^{1+Lip}$ functions which extend $f_{*}$, say
$\phi_0: [x_1, 1] \rightarrow [0, 1]$ and $\phi_1: [x_1,
1] \rightarrow [0, 1]$. For $\omega =(\omega_k)_{k \geq 1} \in \{0,1 \}^{\mathbb{N}}$, define
$$F_n(\omega) =S^n \left(graph \;\phi_{\omega_n}\right)$$
and
$$ F(\omega) = \cup_{k \geq 1} F_k (\omega).$$
Then $F(\omega)$ is the graph of $C^{1+Lip}$
with a quadratic tip $f_\omega$, by an argument similar to what is given 
above. Let now
$$ \tau: \{0,1\}^{\mathbb{N}} \rightarrow \{0,1\}^{\mathbb{N}}$$
be the shift map defined by
$$\tau(\omega)_n = \omega_{n+1},$$
(so that the map $\tau$ acting on the set $\{0, 1\}^{\mathbb{N}}$ is the 
full $2$-shift).
\begin{prop}
For all $\omega \in \{0, 1\}^{\mathbb{N}}$
$$f_{\omega}^2:[0, x_1] \rightarrow [0, x_1]$$
is a unimodal map. In particular $f_{\omega}$ is renormalizable and
$$Rf_{\omega} = f_{\tau(\omega)}.$$
\end{prop}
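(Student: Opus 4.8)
The plan is to transport the scaling identity of Lemma~\ref{scaling-graph} from $f_{*}$ to the maps $f_{\omega}$, which turns the statement into an affine change-of-coordinates computation. Write $I_{0}^{1}=[x_{0},x_{1}]=[0,\sigma_{0}^{*}]$ and $\hat I_{0}^{1}=[1-\sigma_{1}^{*},1]=I_{1}^{1}$, and let $h=S_{1}=h_{\sigma^{*},1}\colon[0,1]\to I_{0}^{1}$ be the orientation reversing affine homeomorphism and $\hat h=S_{2}=\hat h_{\sigma^{*},1}\colon[0,1]\to\hat I_{0}^{1}$ the orientation preserving one, so that $S=(h,\hat h)$ on $[0,1]^{2}$. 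The first and main step is to establish
\[
\mathrm{graph}(f_{\omega})\cap\mathrm{image}(S)=S\bigl(\mathrm{graph}(f_{\tau(\omega)})\bigr).
\]
This is proved exactly as Lemma~\ref{scaling-graph}: $F(\omega)$ is assembled from the blocks $S^{k}(\mathrm{graph}\,\phi_{\omega_{k+1}})$ in the same way $G$ was assembled from the blocks $S^{k}(G_{0})$, applying $S$ raises every block index by one, and the only block not contained in $\mathrm{image}(S)$ is the unrescaled block, the graph of $\phi_{\omega_{1}}$ over $[x_{1},1]$, which meets $\mathrm{image}(S)$ only in the corner $b_{2}=S^{2}(1,0)$. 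Read for the underlying functions, this identity says
\[
f_{\omega}|_{I_{0}^{1}}=\hat h\circ f_{\tau(\omega)}\circ h^{-1}.
\]

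From this I read off what I need. Since $f_{\tau(\omega)}$ is a unimodal map, its image is all of $[0,1]$, so $f_{\omega}(I_{0}^{1})=\hat h([0,1])=\hat I_{0}^{1}$; and because $S_{1}$ fixes $c^{*}$, the critical point of $f_{\omega}|_{I_{0}^{1}}$ is $h(c^{*})=c^{*}$. I next identify $f_{\omega}$ on $\hat I_{0}^{1}$. As $\hat I_{0}^{1}=I_{1}^{1}\subset[x_{1},1]$ and $f_{\omega}$ coincides on $[x_{1},1]$ with the chosen extension $\phi_{\omega_{1}}$, which by construction agrees with $f_{*}=f_{\sigma^{*}}$ on $I_{1}^{1}$, the restriction $f_{\omega}|_{\hat I_{0}^{1}}$ is the affine extension of $q_{c^{*}}|_{\partial I_{1}^{1}}$. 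Evaluating $q_{c^{*}}$ at $\partial I_{1}^{1}=\{1-\sigma_{1}^{*},1\}$ (and using $(\sigma_{0}^{*})^{2}=\sigma_{1}^{*}$ from Lemma~\ref{lip-factor}) shows that this is precisely the orientation reversing affine homeomorphism $\hat I_{0}^{1}\to I_{0}^{1}$, namely $h\circ\hat h^{-1}$.

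Composing the two descriptions proves the proposition. For $x\in I_{0}^{1}$ one has $f_{\omega}(x)=\hat h\bigl(f_{\tau(\omega)}(h^{-1}(x))\bigr)\in\hat I_{0}^{1}$, hence
\[
f_{\omega}^{2}(x)=(h\circ\hat h^{-1})\Bigl(\hat h\bigl(f_{\tau(\omega)}(h^{-1}(x))\bigr)\Bigr)=h\circ f_{\tau(\omega)}\circ h^{-1}(x).
\]
Thus $f_{\omega}^{2}|_{I_{0}^{1}}=h\circ f_{\tau(\omega)}\circ h^{-1}$ is an affine conjugate of the unimodal map $f_{\tau(\omega)}$; in particular it maps $I_{0}^{1}=[0,x_{1}]$ onto itself and is a unimodal map with quadratic tip at $c^{*}$. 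Tracking the orbit of $c^{*}$ ($f_{\omega}(c^{*})=1$, $f_{\omega}^{2}(c^{*})=f_{\omega}(1)=0$, $f_{\omega}^{4}(c^{*})=x_{1}$) gives $[f_{\omega}^{2}(c^{*}),f_{\omega}^{4}(c^{*})]=I_{0}^{1}$ and $f_{\omega}(I_{0}^{1})=\hat I_{0}^{1}=I_{1}^{1}$, while $I_{0}^{1}\cap I_{1}^{1}=\emptyset$ because $\sigma_{0}^{*}+\sigma_{1}^{*}<1$; hence $f_{\omega}$ is renormalizable in the classical sense and
\[
Rf_{\omega}=h^{-1}\circ f_{\omega}^{2}\circ h=f_{\tau(\omega)}.
\]

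The part I expect to be the real work is the opening step: checking that the rescaled blocks $S^{k}(\mathrm{graph}\,\phi_{\omega_{k+1}})$ glue into the graph of a single function on $[0,x_{1}]\setminus\{c^{*}\}$ and that the unrescaled block touches $\mathrm{image}(S)$ only along the boundary corner, so that the scaling identity genuinely holds. This, however, is the same verification already performed for $f_{*}$ in Lemma~\ref{scaling-graph} and for the extension $g$ in the $C^{1+Lip}$-extension subsection; once it is granted, the remainder is the bookkeeping with affine homeomorphisms sketched above.
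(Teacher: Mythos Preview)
Your argument is correct and is essentially a detailed unpacking of the paper's two-line proof, which just records that $f_{\omega}\colon[0,x_{1}]\to I_{1}^{1}$ is unimodal onto, that $f_{\omega}|_{I_{1}^{1}}$ is affine onto $[0,x_{1}]$, and that ``the construction gives'' $Rf_{\omega}=f_{\tau(\omega)}$; you have made that last clause precise via the scaling identity for $S$, exactly in the spirit of Lemma~\ref{scaling-graph}. One small bookkeeping point: your block labeling $S^{k}(\mathrm{graph}\,\phi_{\omega_{k+1}})$, with the unrescaled piece over $[x_{1},1]$ carrying the symbol $\omega_{1}$, is shifted by one from the paper's $F_{n}(\omega)=S^{n}(\mathrm{graph}\,\phi_{\omega_{n}})$ (which assigns no symbol to the piece over $[x_{1},1]$), but your convention is internally consistent and the conclusion is unchanged.
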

\begin{proof}
Note that $f_{\omega}:[0, x_1] \rightarrow I_1^1$ is unimodal and onto. Furthermore, $f_{\omega}:I_1^1 \rightarrow [0,
x_1]$ is affine and onto. Hence $f_{\omega}$ is renormalizable. The construction also gives
$$Rf_{\omega} =f_{\tau(\omega)}.$$
\end{proof}
\begin{thm}
Renormalization acting on the space of $C^{1+Lip}$ unimodal maps has positive entropy.
\end{thm}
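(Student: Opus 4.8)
The plan is to exhibit the full one-sided $2$-shift as a topological subsystem of $\mathrm{R}$ and then to invoke the stated definition of topological entropy on a non-compact space as the supremum of the entropies of compact invariant subsets. First I would fix, once and for all, two \emph{distinct} $C^{1+Lip}$ extensions $\phi_0,\phi_1:[x_1,1]\to[0,1]$ of $f_*$ (subject to the value- and first-order matching conditions along the endpoints of $[x_1,1]$ and on $I_1^1$ that the construction above requires of an admissible extension); such a pair exists because these conditions are finitely many and linear, so their solution set is infinite dimensional --- for instance one may take $\phi_1=\phi_0+\delta\psi$ with $\psi$ a fixed bump supported in the interior of the gap and vanishing to first order at its endpoints, and $\delta>0$ small enough that $\phi_1$ is still admissible (the quadratic tip sits at $c^*$, which lies in no gap interval and is governed only by the points $b_n$ on $\mathrm{graph}(q_{c^*})$, so it survives any bounded choice of $\phi$, the scaled pieces $S^k(\mathrm{graph}\,\phi)$ collapsing onto $(c^*,1)$ inside the shrinking boxes $B^k$ regardless). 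For $\omega\in\{0,1\}^{\mathbb{N}}$ let $f_\omega$ be the $C^{1+Lip}$ unimodal map with quadratic tip built from $(\phi_{\omega_k})_{k\ge1}$ by the construction above, and set $\pi(\omega)=f_\omega$. By the preceding Proposition $\mathrm{R}\,\pi=\pi\,\tau$, so $X:=\pi(\{0,1\}^{\mathbb{N}})$ is $\mathrm{R}$-invariant.

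The key step is to show that $\pi$ is a \emph{continuous injection} from $\{0,1\}^{\mathbb{N}}$, with the product topology, into the space of $C^{1+Lip}$ unimodal maps. Since $\{0,1\}^{\mathbb{N}}$ is compact and the target Hausdorff, $\pi$ would then be a homeomorphism onto $X$, hence a topological conjugacy between the full $2$-shift $\tau$ and $\mathrm{R}|_X$. For continuity: if $\omega,\omega'$ agree on their first $N$ symbols, then $f_\omega$ and $f_{\omega'}$ are built from the same pieces $\phi_{\omega_1},\dots,\phi_{\omega_N}$ and so have identical graphs outside $B^N=S^N([0,1]^2)$, while both graphs lie inside $B^N$ over the remaining interval; properness of $\sigma^*$ gives $|I_0^N|\le(1-\epsilon)^N$ so $\mathrm{diam}(B^N)\to 0$, and since each application of $S$ multiplies the slope of a graph by $|\sigma_1^*/\sigma_0^*|=\sigma_0^*<1$ (Lemma~\ref{sigsq-sig}, as in the $C^1$-extension argument above) the slopes of $f_\omega$ and $f_{\omega'}$ are both $\le C(\sigma_0^*)^N$ on $B^N$; hence $dist_1(f_\omega,f_{\omega'})\le C(\sigma_0^*)^N\to 0$. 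For injectivity: the restriction of $f_\omega$ to the outermost free interval equals $\phi_{\omega_1}$, which recovers $\omega_1$ because $\phi_0\neq\phi_1$; applying this to $\mathrm{R}^{\,n}f_\omega=f_{\tau^n(\omega)}$ recovers every $\omega_{n+1}$, so $f_\omega=f_{\omega'}$ forces $\omega=\omega'$.

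Granting these, $X$ is a compact $\mathrm{R}$-invariant subset of the space of $C^{1+Lip}$ unimodal maps on which $\mathrm{R}$ is topologically conjugate to the full $2$-shift, so $h(\mathrm{R}|_X)=h(\tau)=\log 2$; by the definition of entropy on a non-compact space this gives $h(\mathrm{R})\ge\log 2>0$. (The same construction with $m$ pairwise-distinct admissible extensions realizes the full $m$-shift inside $\mathrm{R}$, giving $h(\mathrm{R})\ge\log m$ for all $m$.) The only genuinely computational ingredient is the continuity estimate, and it rests entirely on facts already established --- properness of $\sigma^*$ and the contraction $\sigma_0^*<1$ of Lemma~\ref{sigsq-sig}; that each $f_\omega$ is a bona fide $C^{1+Lip}$ unimodal map with quadratic tip and that $\mathrm{R}f_\omega=f_{\tau(\omega)}$ is precisely the construction carried out above. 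I expect the fussiest point to be the injectivity bookkeeping --- pinning down exactly which subinterval of $[0,1]$ carries the ``free'' copy $\phi_{\omega_1}$ --- but this is forced by the nested structure of the intervals $I_0^n$ and $I_1^n$ and poses no real obstacle.
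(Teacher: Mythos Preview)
Your proposal is correct and follows essentially the same route as the paper: exhibit the full one-sided $2$-shift inside the domain of $R$ via the family $\omega\mapsto f_\omega$, use the conjugacy $Rf_\omega=f_{\tau(\omega)}$, and conclude positive entropy (with the same remark that $m$ extensions give entropy $\ge\log m$). You are in fact more careful than the paper, which records only injectivity of $\omega\mapsto f_\omega$ and does not spell out the continuity/compactness needed to turn this into a genuine topological conjugacy; your $dist_1$-continuity estimate via the exponential shrinking of $B^N$ and the slope contraction $\sigma_1^*/\sigma_0^*=\sigma_0^*<1$ fills exactly that gap.
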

\begin{proof}
Note that $\omega \rightarrow f_{\omega} \in C^{1+Lip}$ is injective. Hence the domain of $R$ contains a copy of the full
$2$-shift (i.e., contains a subset on which the restriction of $R$ is 
topologically conjugate to the full $2$-shift).
\end{proof}

\begin{rem}
We can also embedded a full $k$-shift in the domain of $R$ by choosing $\phi_0, \phi_1, \dots, \phi_{k-1}$ and repeat the
construction. The entropy of $R$ on $C^{1+Lip}$ is actually unbounded.
\end{rem}

\section{Chaotic scaling data}

In this section we will use a variation on the construction of scaling data as presented in
$\S~\ref{ren-of-c1lip}$  to obtain the following

\begin{thm} \label{chaotic-invar-cantor}
There exists an infinitely renormalizable $C^{1+Lip}$ unimodal map $g$ with quadratic tip such that
$\displaystyle{\left \{c_n \right \}_{n \geq 0}}$, where $c_n$ is the critical point of $R^ng$, is dense in a 
Cantor set. 
\end{thm}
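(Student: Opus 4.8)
The plan is to mimic the affine-extension machinery of Section~\ref{ren-of-c1lip}, but to replace the single fixed scaling bi-factor $\sigma^*$ by a \emph{sequence} of bi-factors that is forced to shadow a full shift, so that the critical points $c_n$ of the successive renormalizations run over a Cantor set. Concretely, I would first isolate the map $c\mapsto R(c)$ of equation~(\ref{cnew}) together with the induced maps $c\mapsto A_0(c)$, $c\mapsto A_1(c)$; these encode, for a piecewise-affine-over-quadratic model, which critical point at level $n+1$ is produced by a given critical point at level $n$. The key structural fact is that $R:C\to\mathbb{R}$ is expanding (as already noted in the proof of the Proposition above). Expansion on an interval, together with the existence of at least one invariant proper subinterval on which $R$ is genuinely Markov over two pieces, yields an $R$-invariant Cantor set $\Lambda\subset C$ on which $R$ is conjugate to a subshift of finite type (in fact one can arrange a full $2$-shift by choosing two disjoint closed subintervals $C_0,C_1\subset C$ each mapped onto $C_0\cup C_1$ by $R$). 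I would pick $\Lambda$ so that it is a genuine Cantor set, i.e.\ perfect and totally disconnected, and pick a point $c^\diamond\in\Lambda$ whose $R$-orbit $\{R^n(c^\diamond)\}_{n\ge 0}$ is dense in $\Lambda$ — such a point exists because $R|_\Lambda$ is topologically conjugate to a subshift containing a dense orbit (e.g.\ the point corresponding to a sequence listing all finite words).

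Next I would build the scaling data. Set $c_n:=R^n(c^\diamond)$ and define $\sigma(n):=(A_0(c_n),A_1(c_n))\in\Delta$; by the computation preceding the feasible-domain discussion this is exactly the bi-factor whose associated affine maps $\tilde\sigma_0(n),\tilde\sigma_1(n)$ reproduce the combinatorics of $q_{c_n}$ at that level, and since $c^\diamond$ was chosen inside the compact set $\Lambda\subset C\subset(0,1/2)$ with $A_0+A_1<1$ on $C$, the data $\sigma$ is $\epsilon$-proper for some uniform $\epsilon>0$ (compactness of $\Lambda$ and continuity of $A_0,A_1$). This properness is what the whole extension argument needs. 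The relations~(\ref{cond1})--(\ref{cond3}), which were reverse-engineered to give $A_0,A_1,R$, guarantee that the piecewise-affine map $f_\sigma:D_\sigma\to[0,1]$ associated with this $\sigma$ is infinitely renormalizable in the sense of the Definition, with $\mathrm{R}^n f_\sigma=f_{s^n(\sigma)}$ by Lemma~\ref{lem1} and Lemma~\ref{lem2}; moreover by construction the critical point of $f_{s^n(\sigma)}$ is precisely $c_n$, so $\{c_n\}_{n\ge0}=\{R^n(c^\diamond)\}_{n\ge0}$ is dense in the Cantor set $\Lambda$.

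Finally I would upgrade $f_\sigma$ to a genuine $C^{1+Lip}$ unimodal map $g$ with a quadratic tip, repeating the extension argument of the subsection ``$C^{1+Lip}$ extension'' essentially verbatim: one chooses a $C^{1+Lip}$ extension $\phi$ of the initial affine piece on $[x_1,1]$ and propagates it by the scaling maps, except that now at level $n$ one applies the affine scaling $S_n$ attached to $\sigma(n)$ rather than a single $S$. The analogue of Lemma~\ref{sigsq-sig}, $(\sigma_0(n))^2=\sigma_1(n)$, still holds level by level — indeed it is exactly the identity $A_1(c)=A_0(c)^2\cdot(\text{stuff})$? here one should be careful — so I would instead verify the quadratic-tip and uniform-Lipschitz estimates directly from the relation $\sigma_1(n)=\big(x_n-c\big)^2/\big(x_{n-1}-c\big)^2$ at each scale, exactly as in Lemma~\ref{lip-factor}, which gives a uniform bound on the Lipschitz constants of the derivatives provided the ratios $\sigma_1(n)/\sigma_0(n)^2$ stay bounded; the latter again follows from compactness of $\Lambda$. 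The resulting $g$ is $C^{1+Lip}$, has a quadratic tip, is infinitely renormalizable, and has $R^ng$ with critical point $c_n$, proving the theorem.

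I expect the main obstacle to be the \emph{quadratic-tip and uniform-Lipschitz control of the extension when the scaling factors vary with $n$}: in the fixed-point case Lemma~\ref{lip-factor} gave the clean identity $\sigma_1^*/(\sigma_0^*)^2=1$, which made the Lipschitz constants non-increasing; with varying data one only gets $\sigma_1(n)/\sigma_0(n)^2$ bounded away from $0$ and $\infty$, so the Lipschitz bound for the derivative may grow, and one must check that the product of the per-level factors stays bounded (equivalently that $\sum_n|\log(\sigma_1(n)/\sigma_0(n)^2)|<\infty$ is not needed — boundedness of the partial products suffices because the horizontal contractions also compound). Verifying this, and checking that the glued graph really is $C^1$ at every break point $b_n$ and has vanishing slope near $c^\diamond$ (so that the tip is quadratic rather than of higher order), is the delicate part; everything else is a routine transcription of the constructions already carried out in Section~\ref{ren-of-c1lip}.
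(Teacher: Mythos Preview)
Your proposal contains a genuine gap at the very first step. The map $R:C\to\mathbb{R}$ of equation~(\ref{cnew}) is a single monotone expanding branch with a unique repelling fixed point $c^*$; this is precisely the content of the Proposition you cite, which shows that $\{c^*\}$ is the \emph{entire} set of points whose forward $R$-orbit stays in the feasible domain $C$. A monotone expanding interval map cannot cover two disjoint subintervals, so there are no $C_0,C_1\subset C$ with $R(C_i)\supset C_0\cup C_1$, and hence no $R$-invariant Cantor set $\Lambda$ to work with. Your construction of the scaling data $\sigma(n)=(A_0(c_n),A_1(c_n))$ then collapses to the constant sequence $\sigma^*$, and you recover only the fixed point of Section~\ref{ren-of-c1lip}.

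The paper repairs this by introducing an extra degree of freedom: it deforms the construction via a parameter $\epsilon$ close to $1$, setting $\sigma_0(c,\epsilon)=\epsilon\,q_c^2(0)$ and $R(c,\epsilon)=1-c/(\epsilon\,q_c^2(0))$. Two nearby values $\epsilon_0>\epsilon_1$ give two expanding maps $R_0,R_1$ whose fixed points $c_0^*<c_1^*$ are distinct, and on $[c_0^*,c_1^*]$ the pair $(R_0,R_1)$ forms a genuine horseshoe with an invariant Cantor set coded by $\{0,1\}^{\mathbb N}$. The sequence $c_n$ is then driven by a choice of $\omega\in\{0,1\}^{\mathbb N}$ via $c(\tau\omega)=R_{\omega_0}(c(\omega))$, and one takes $\omega$ with dense shift-orbit. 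Because $\epsilon\ne 1$, the piecewise-affine model is no longer simply $q_c$ on the endpoints: an extra affine piece is inserted at each level, so the relations (\ref{cond1})--(\ref{cond3}) and Lemma~\ref{lem1} do not apply verbatim, and the extension to a $C^{1+Lip}$ map has to be redone with level-dependent scaling boxes $S^n$. The uniform Lipschitz bound is then obtained not from an identity $\sigma_1/\sigma_0^2=1$ but from the two-sided estimate $|\hat I_0^n|\asymp |I_0^n|^2$, which holds because all the $c_n$ lie in the compact interval $[c_0^*,c_1^*]$. Your instinct that compactness of the parameter set is what saves the Lipschitz estimate is correct; what you are missing is the mechanism that produces a nontrivial compact parameter set in the first place.
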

The proof needs some preparation. For $\epsilon > 0$ we will modify the construction as described in
$\S~\ref{ren-of-c1lip}$. This modification is illustrated in Figure $\ref{eps-variation}$. For $c \in (0, 
\frac{1}{2})$
let
\begin{eqnarray*}
\sigma_1(c, \epsilon) & = & 1-q_c(0),\\
\sigma_0(c, \epsilon) & = & \epsilon \;q_c^2(0),
\end{eqnarray*}
where $\epsilon > 0$ and close to $1$. Also let
$$R(c,\epsilon) = \frac{\sigma_0(c,\epsilon)-c}{\sigma_0(c,\epsilon)} = 1-\frac{c}{q_c^2(0)} \cdot \frac{1}{\epsilon}.$$

\begin{figure}[ht]
\centering
\psfrag{qc}[c][c][1][0]{${q_c}$}
\psfrag{c}[c][c][1][0]{${c}$}
\psfrag{so}[c][c][0.75][0]{${\sigma_0(c, \epsilon)}$}
\psfrag{si}[c][c][0.75][0]{${\sigma_1(c, \epsilon)}$}
\psfrag{p1}[c][c][0.75][0]{${q_c^2(0)}$}
\psfrag{p2}[c][c][0.75][0]{${\epsilon \;q_c^2(0)}$}
\psfrag{lc}[c][c][1][0]{$f_{\sigma}$}
\includegraphics[scale=1.15]{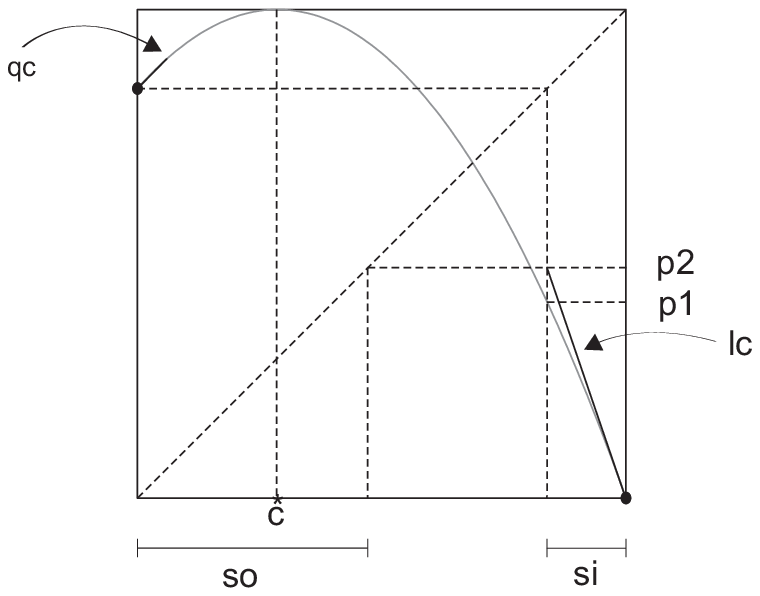}
\caption{} \label{eps-variation}
\end{figure}


In $\S~\ref{ren-of-c1lip}$ we observed that $R(c,1)$ has a unique fixed point
$c^{*} \in (0, \frac{1}{2})$ with feasible
$\sigma_0(c^{*},1)$ and $\sigma_1(c^{*},1)$. This fixed point is expanding. Although we will not use this, a numerical
computation gives
$$\frac{\partial{R}}{\partial{c}}(c^{*},1) > 2.$$
Now choose $\epsilon_0 > \epsilon_1$ close to $1$. Then $R(\cdot, \epsilon_0)$ will have an expanding fixed point
$c^{*}_0$ and $R(\cdot, \epsilon_1)$ a fixed point $c^{*}_1$. In particular, by choosing $ \epsilon_0 > \epsilon_1$ close
enough to $1$ we will get the following horseshoe as shown in Figure
$\ref{horseshoe}$; more precisely there exists an interval
$A_0 = [c^{*}_0, a_0]$ and $A_1 = [a_1, c_1^{*}]$ such that
$$ R_0 : A_0 \rightarrow [c^{*}_0, c_1^{*}] \supset A_0$$
and
$$R_1 : A_1 \rightarrow [c^{*}_0, c_1^{*}] \supset A_1$$
are expanding diffeomorphisms (with derivative larger than $2$, but larger than one would suffice to get a 
horseshoe). Here
$$R_0(c) = R(c, \epsilon_0)$$ and $$R_1(c) = R(c, \epsilon_1).$$

\begin{figure}[ht]
\centering
\psfrag{R0}[c][c][1][0]{${\rm{R}_0}$}
\psfrag{R1}[c][c][1][0]{${\rm{R}_1}$}
\psfrag{co}[c][c][1][0]{${c_0^{*}}$}
\psfrag{ci}[c][c][1][0]{${c_1^{*}}$}
\psfrag{A0}[c][c][1][0]{${\rm{A}_0}$}
\psfrag{A1}[c][c][1][0]{${\rm{A}_1}$}
\includegraphics[scale=0.50]{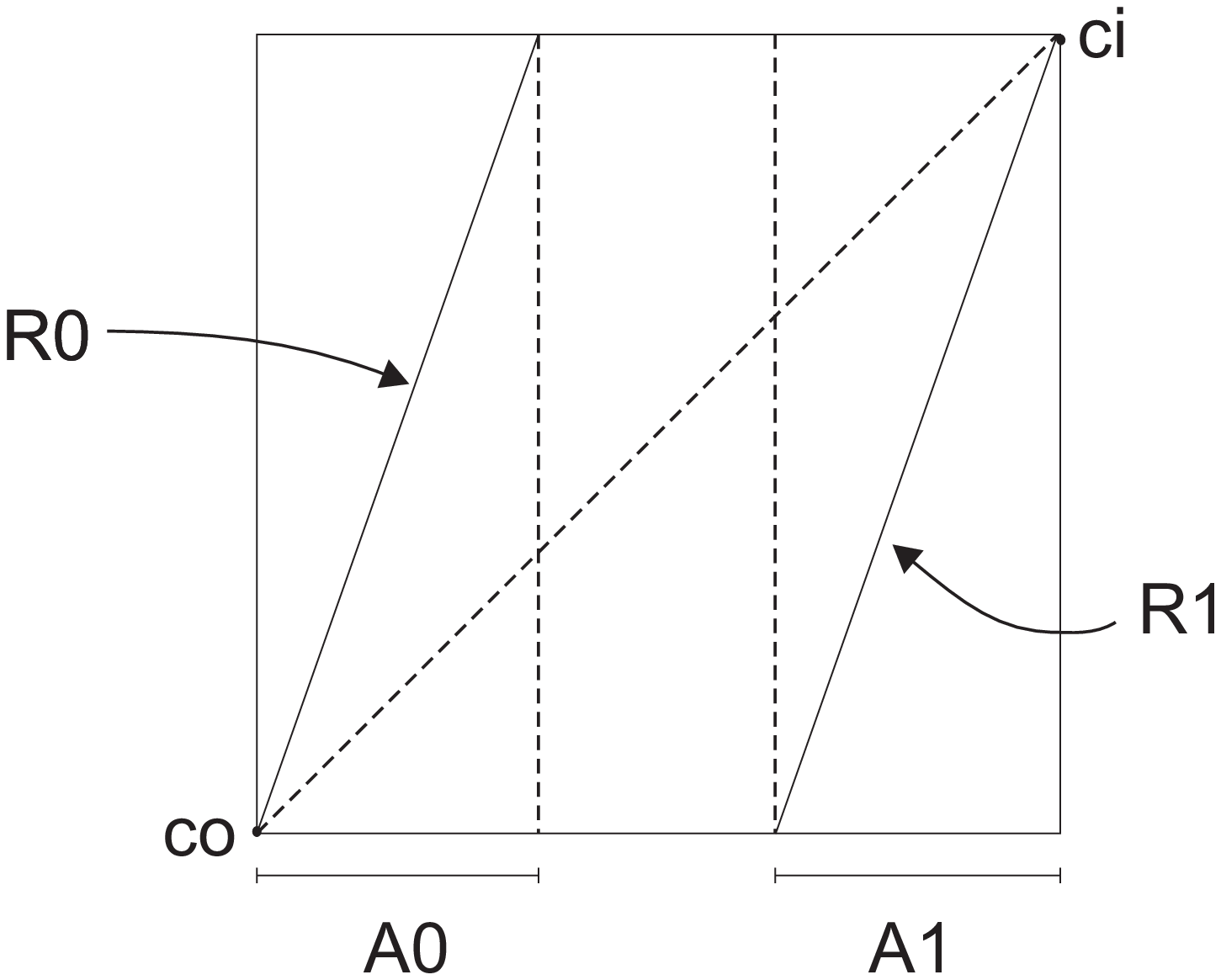}
\caption{} \label{horseshoe}
\end{figure}

Use the following coding for the invariant Cantor set of the horseshoe map
$$c: \left\{0, 1\right \}^{\mathbb{N}} \rightarrow [c^{*}_0, c_1^{*}]$$
with
$$c(\tau \omega) = R \left( c(\omega), \epsilon_{\omega_0} \right)$$
where $\tau:  \left\{0, 1\right \}^{\mathbb{N}} \rightarrow
\left\{0, 1\right \}^{\mathbb{N}}$ is the shift.
Given $\omega \in \left\{0, 1\right \}^{\mathbb{N}}$ define the following scaling
data $\sigma: \mathbb{N} \rightarrow \Delta$.
$$\sigma(n) = \left ( \sigma_0 \left( c(\tau^n \omega), \epsilon_{\omega_n}
\right), \sigma_1 \left( c(\tau^n \omega), \epsilon_{\omega_n} \right) \right).$$
Again, by taking $\epsilon_0, \epsilon_1$, close enough to $1$, we can assume that
$\sigma(n)$ is proper scaling data for any chosen $\omega \in
\left\{0,1\right\}^{\mathbb{N}}.$
As in $\S~\ref{ren-of-c1lip}$ we will define a piece wise affine map
$$f_{\omega}:D_{\omega} = \cup_{n \geq 1} I_1^n \rightarrow [0, 1].$$
The precise definition needs some preparation. Use the notation as 
illustrated in Figure $\ref{chaotic-cantor}$.
For $n \ge 0$ let
$$I_0^n =  [x_n, \;x_{n-1}]$$
where $x_n =  \partial{I_0^n} \setminus \partial{I_0^{n-1}}, \;\; n \geq 1$ and
$$I_1^n  =  [y_n, \;x_{n-2}]$$
where $y_n =  \partial{I_1^n} \setminus \partial{I_0^{n-1}},\;\;  n \geq 1.$

\begin{figure}[ht]\centering
\psfrag{I0n}[c][c][1][0]{${\rm I}_0^n$}
\psfrag{I1n}[c][c][1][0]{${\hat{\rm I}}_0^n$}
\psfrag{Ion1}[c][c][1][0]{${\rm I}_0^{n+1}$}
\psfrag{Iin1}[c][c][1][0]{${\rm I}_1^{n+1}$}
\psfrag{Xn}[c][c][1][0]{${x_{n}}$}
\psfrag{xn-1}[c][c][1][0]{${x_{n-1}}$}
\psfrag{yn}[c][c][1][0]{${y_{n+1}}$}
\psfrag{x-1c}[c][c][1][0]{${\hat{x}_{n-1}}$}
\psfrag{xnc}[c][c][1][0]{${\hat{x}_{n}}$}
\psfrag{ync}[c][c][1][0]{${\hat{y}_{n+1}}$}
\psfrag{c}[c][c][1][0]{${c}$} \psfrag{1}[c][c][1][0]{${1}$}
\psfrag{qc}[c][c][1][0]{${q_c}$}
\psfrag{ln}[c][c][1][0]{${\rm{\hat{I}}_1^{n+1}}$}
\psfrag{rn}[c][c][1][0]{${\rm{\hat{I}}_0^{n+1}}$}
\includegraphics[scale=1.15]{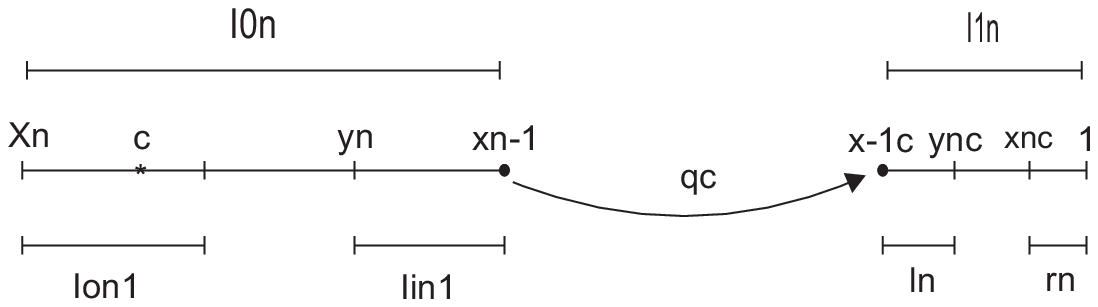}
\caption{}
\label{chaotic-cantor}
\end{figure}

Let $$\hat{I}_0^n = q_c([x_{n-1}, \;1]) = q_c(I_0^n) = [\hat{x}_{n-1}, \;1]$$
where $\hat{x}_{n-1} = q_c(x_{n-1}).$ Finally, let
$\hat{I}_1^{n+1} = [\hat{x}_{n-1}, \; \hat{y}_{n+1}] \subset \hat{I}_0^n$ such that
$$|\hat{I}_1^{n+1}| = \sigma_0(n) \cdot |\hat{I}_0^n|.$$
Now define $f_{\omega} : I_{1}^{n+1} \rightarrow \hat{I}_1^{n+1}$ to be the affine homeomorphism such that


$$f_{\omega} (x_{n-1}) = q_c(x_{n-1}) = \hat{x}_{n-1}.$$
\begin{lem} \label{lip-cond}
There exists $K > 0$ such that
$$\frac{1}{K} \leq \frac{|\hat{I}_0^n|}{|I_0^n|^2} \leq K.$$
\end{lem}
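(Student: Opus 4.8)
The plan is to track the two length sequences $|I_0^n|$ and $|\hat I_0^n|$ directly from the construction and show they both decay geometrically, with the second roughly the square of the first. First, I would record the recursive relations coming from the scaling data: by definition $I_0^{n+1}=\tilde\sigma_0(n+1)(\text{something inside }I_0^n)$ so that
\[
\frac{|I_0^{n+1}|}{|I_0^n|}=\sigma_0(n)=\sigma_0\!\left(c(\tau^n\omega),\epsilon_{\omega_n}\right),
\qquad
\frac{|\hat I_1^{n+1}|}{|\hat I_0^n|}=\sigma_0(n),
\]
and, reasoning as in Lemma \ref{lip-factor} (using that $f_\omega$ has a quadratic tip, i.e. $f_\omega(x_{n-1})=q_c(x_{n-1})$), that the passage from one level to the next in the $\hat I$-tower picks up a factor $\sigma_1(n)$, so $|\hat I_0^{n+1}|/|\hat I_0^n|=\sigma_1(n)=\sigma_1(c(\tau^n\omega),\epsilon_{\omega_n})$. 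Thus
\[
\frac{|\hat I_0^n|}{|I_0^n|^2}
=\frac{|\hat I_0^0|}{|I_0^0|^2}\cdot\prod_{k=0}^{n-1}\frac{\sigma_1(k)}{\sigma_0(k)^2}.
\]

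The heart of the matter is therefore to bound the ratio $\sigma_1(c,\epsilon)/\sigma_0(c,\epsilon)^2$ uniformly, away from $0$ and $\infty$, for the relevant parameter values. Here I would use the explicit formulas $\sigma_1(c,\epsilon)=1-q_c(0)$ and $\sigma_0(c,\epsilon)=\epsilon\,q_c^2(0)$. At the fixed-point-of-scaling value $\epsilon=1$ the computation of Lemma \ref{lip-factor} gives exactly $\sigma_1=\sigma_0^2$, i.e. the ratio equals $1$ there; for general $c$ and $\epsilon$ near $1$ one has
\[
\frac{\sigma_1(c,\epsilon)}{\sigma_0(c,\epsilon)^2}
=\frac{1}{\epsilon^2}\cdot\frac{1-q_c(0)}{q_c^2(0)^2},
\]
and the function $c\mapsto (1-q_c(0))/q_c^2(0)^2$ is continuous and positive on the compact parameter range in play (the critical points $c(\tau^n\omega)$ all lie in the horseshoe interval $[c_0^*,c_1^*]\subset(0,\tfrac12)$, on which $q_c(0)$ and $q_c^2(0)$ are bounded away from $0$ and $1$). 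Since $\epsilon_0,\epsilon_1$ are fixed close to $1$, the ratio $\sigma_1(k)/\sigma_0(k)^2$ lies in a fixed interval $[1/K_0,K_0]$ with $K_0$ close to $1$.

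The remaining — and only mildly delicate — point is that an individual factor being close to, but not equal to, $1$ must not be allowed to make the product $\prod_{k=0}^{n-1}\sigma_1(k)/\sigma_0(k)^2$ drift to $0$ or $\infty$ as $n\to\infty$. This is where I expect the main obstacle to be, and I would handle it by choosing $\epsilon_0,\epsilon_1$ close enough to $1$ that each factor satisfies $|\log(\sigma_1(k)/\sigma_0(k)^2)|\le 2|\log\epsilon_{\omega_k}|+\delta(c(\tau^k\omega))$ where the $\delta$-term, coming from the deviation of $c(\tau^k\omega)$ from $c_0^*$ or $c_1^*$, decays geometrically along the horseshoe orbit (the maps $R_0,R_1$ are uniformly expanding with derivative $>2$, so the backward contraction gives $\mathrm{dist}(c(\tau^k\omega),\{c_0^*,c_1^*\})\lesssim 2^{-k}$ — more care is needed since the relevant comparison is with the finite itinerary, but summability is what matters). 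Hence $\sum_k|\log(\sigma_1(k)/\sigma_0(k)^2)|$ converges, the infinite product converges to a positive finite limit, and in particular all partial products lie in a fixed interval $[1/K,K]$. Absorbing the bounded initial factor $|\hat I_0^0|/|I_0^0|^2$ into $K$ completes the proof.
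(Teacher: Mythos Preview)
There is a genuine gap in your convergence step. You bound $|\log(\sigma_1(k)/\sigma_0(k)^2)|$ by $2|\log\epsilon_{\omega_k}|+\delta(c(\tau^k\omega))$ and claim both pieces are summable. Neither is. The $\epsilon_{\omega_k}$ take values in the fixed two-point set $\{\epsilon_0,\epsilon_1\}$ with $\epsilon_i\ne 1$, so $\sum_k 2|\log\epsilon_{\omega_k}|$ diverges outright. And the claimed decay $\mathrm{dist}\bigl(c(\tau^k\omega),\{c_0^*,c_1^*\}\bigr)\lesssim 2^{-k}$ is false: expansion of $R_0,R_1$ makes the \emph{inverse} branches contract (this is what makes the coding $c:\{0,1\}^{\mathbb N}\to[c_0^*,c_1^*]$ well defined), but the \emph{forward} shift orbit $\{c(\tau^k\omega)\}_{k\ge 0}$ does not approach the fixed points---for the $\omega$ actually used in Theorem~\ref{chaotic-invar-cantor} it is dense in the horseshoe Cantor set. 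So nothing in your argument prevents the partial products from drifting to $0$ or $\infty$. (There is also a subsidiary issue: the relation $|\hat I_0^{n+1}|/|\hat I_0^n|=\sigma_1(n)$ you lift from Lemma~\ref{lip-factor} relied there on the affine surjection $f_*^{2^n-1}:\hat I_0^n\to I_0^n$ of Definition~1; in the modified construction of \S4, where $|\hat I_1^{n+1}|$ is \emph{defined} to equal $\sigma_0(n)\,|\hat I_0^n|$, this dynamical identity is not available for free and would need separate justification.)

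The paper's proof avoids the product entirely with a direct computation. By definition $\hat I_0^n=[q_c(x_{n-1}),1]$, so
\[
|\hat I_0^n|\;=\;1-q_c(x_{n-1})\;=\;\frac{(c-x_{n-1})^2}{(1-c)^2}
\]
exactly, and the lemma is equivalent to $|c-x_{n-1}|\asymp|I_0^n|$. The upper bound is trivial since $c\in I_0^n$. For the lower bound one only needs that the scaling factors $\sigma_0(k)=\epsilon_{\omega_k}\,q_{c(\tau^k\omega)}^2(0)$ lie in a compact subset of $(0,1)$, which holds because $c(\tau^k\omega)\in[c_0^*,c_1^*]$ and $\epsilon_{\omega_k}\in\{\epsilon_0,\epsilon_1\}$; since $c\in I_0^{n+1}$ and $x_{n-1}$ is the endpoint of $I_0^n$ not belonging to $I_0^{n+1}$, one gets $|c-x_{n-1}|\ge |I_0^n|-|I_0^{n+1}|=(1-\sigma_0(n+1))\,|I_0^n|$. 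The quadratic formula for $q_c$ already produces the square in $|I_0^n|^2$; there is nothing to iterate.
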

\begin{proof}
Observe, $c(n) = c(\tau^n \omega) \in [c_0^{*}, c_1^{*}]$ which is a small interval around $c^{*}$.
This implies that for some $K > 0$
$$\frac{1}{K} \leq \frac{|c-x_{n-1}|}{|I_0^n|} \leq K.$$
Then
$$\frac{|\hat{I}_0^n|}{|I_0^n|^2} = \frac{|q_c([c,x_{n-1}])|}{|I_0^n|^2} 
= \frac{(c-x_{n-1})^2}{(1-c)^2} \cdot \frac{1}{(I_0^n)^2}$$
which implies the bound.
\end{proof}
Let $S^n_2: [0, 1] \rightarrow \hat{I}_0^n$ be the affine orientation preserving
homeomorphism and $S_1^n: [0, 1] \rightarrow I_0^n$ be the affine homeomorphism
with $S_1^n(1) = x_{n-1}.$ Define
$$S^n: [0, 1]^2 \rightarrow [0, 1]^2$$
by
\begin{displaymath}
S^n \left( \begin{array}{cc}
x \\
y
\end{array} \right) 
= \left( \begin{array}{cc}
S_1^n(x)\\
S_2^n(y)
\end{array} \right).
\end{displaymath}
The image of $S^n$ is $B_n$.

\begin{figure}[ht]
\centering
\psfrag{qc}[c][c][1][0]{${q_c}$}
\psfrag{cn}[c][c][1][0]{${c_n}$}
\psfrag{so}[c][c][0.75][0]{${\sigma_0(n)}$}
\psfrag{si}[c][c][0.75][0]{${\sigma_1(n)}$}
\psfrag{fn}[c][c][0.75][0]{${\rm{F}_n}$}
\psfrag{gn}[c][c][0.75][0]{${\rm{G}_n}$}
\includegraphics[scale=1.15]{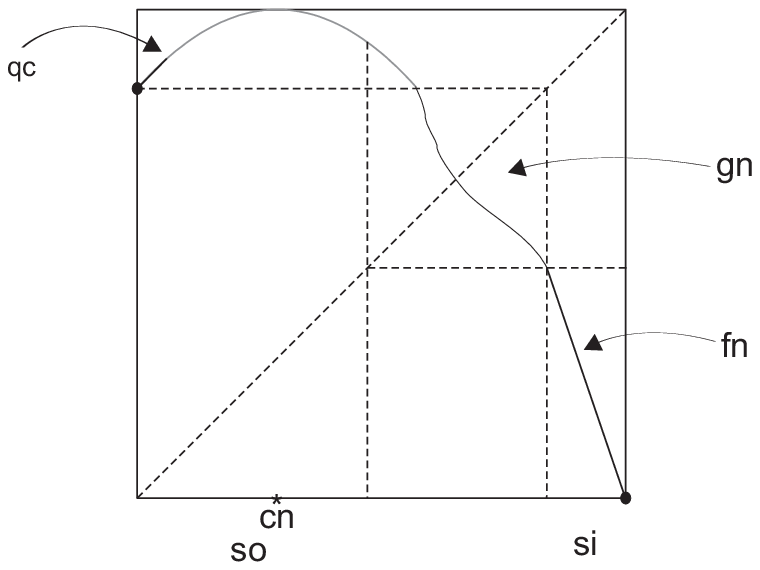}
\caption{}
\end{figure}

Let $F_n = (S^n)^{-1}(graph \;f_{\omega})$. This is the graph of a function $f_n$.
We will extend this function (and its graph) on the gap $[\sigma_0(n), \;
1-\sigma_1(n)].$ Notice, that $\sigma_0(n), 1-\sigma_1(n), Df_n(\sigma_0(n)),\;
\text{and}\; Df_n(1-\sigma_1(n))$
vary within a compact family. This allows us to
choose from a compact family of $C^{1+Lip}$ diffeomorphisms an extension
$$g_n:[\sigma_0(n), 1] \rightarrow [0, f_n(\sigma_0(n))]$$
of the map $f_n$. The Lipschitz constant of $Dg_n$ is bounded by $K_0 > 0$.
Let $G_n$ be the graph of $g_n$ and
$$G = \cup_{n \geq 0 } \;S^n(G_n).$$
Then $G$ is the graph of a unimodal map
$$g:[0, 1] \rightarrow [0,1]$$
which extends $f_{\omega}$. Notice, $g$ is $C^{1}$. It has a quadratic tip because
$f_{\omega}$ has a quadratic tip. Also notice that $S^n(G_n)$
is the graph of a $C^{1+Lip}$ diffeomorphism. The Lipschitz bound $L_n$ of its derivative
satisfies, for a similar reason as in $\S~\ref{ren-of-c1lip}$,
$$L_n \leq \frac{|\hat{I}_0^n|}{|(I_0^n)|^2} \cdot K_0.$$
This is bounded by Lemma $\ref{lip-cond}.$
Thus $g_{\omega}$ is a $C^{1+Lip}$ unimodal map with quadratic tip. The
construction implies that $g$ is infinitely renormalizable and
$$graph \;(R^ng_{\omega}) \supset F_n.$$
One can prove Theorem $\ref{chaotic-invar-cantor}$ by choosing $\omega \in \left\{0, 1\right\}^{\mathbb{N}}$ such that the
orbit under the
shift $\tau$ is dense in the invariant Cantor set of the horseshoe map.
\begin{rem}
Let $\omega = \left\{0,0,\dots \right\}$, then we will get another  renormalization fixed point which is a modification of
the one constructed in $\S~\ref{ren-of-c1lip}.$
\end{rem}

\section{$C^{2+|\cdot|}$ unimodal maps} \label{c2abs}

Let $f:[0, 1] \rightarrow [0, 1]$ be a $C^2$ unimodal map with critical point $c \in (0, 1)$.
Say,  $D^2f(x) = E (1+\varepsilon(x))$, where
$$\varepsilon:[0, 1] \rightarrow \mathbb{R}$$ 
is continuous with $\varepsilon(c)= 0$ and $E = D^2f(c) \neq 0$. Let then
$$\bar {\varepsilon}:[0, 1] \rightarrow \mathbb{R}$$ 
be defined by
$$\bar{\varepsilon}(x) = \frac{1}{x-c} \; \int_c^x \varepsilon(t) dt.$$
Notice, $\bar{\varepsilon}$ is continuous with $\bar {\varepsilon}(c) = 0$. Furthermore, $1+\bar {\varepsilon}(x) \neq
0$ for all $x \in [0, 1]$. Since
$$Df(x) = E (x-c) (1+\bar {\varepsilon}(x))$$
and $Df(x)$ equals zero only when $x = c$.
Let the map $$\delta:[0, 1] \rightarrow \mathbb{R}$$
defined by
$$\delta(x) = \varepsilon(x)-\bar {\varepsilon}(x).$$
Notice that $\delta$ is continuous and $\delta(c)=0$.
Finally, define $$\beta:[0, 1] \rightarrow \mathbb{R}$$ by
$$\beta(x) = \int_c^x \frac{1} {t-c} \;\delta(t) dt.$$

\begin{lem} The function $\beta$ is continuous and $\varepsilon = \delta + \beta$.
\end{lem}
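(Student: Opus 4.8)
The plan is to identify $\beta$ with the function $\bar\varepsilon$ already at hand: once we know $\beta=\bar\varepsilon$, continuity of $\beta$ is inherited from the continuity of $\bar\varepsilon$ noted just above, and the asserted identity collapses to the tautology $\varepsilon=(\varepsilon-\bar\varepsilon)+\bar\varepsilon=\delta+\beta$. So the only real content is the equality $\beta=\bar\varepsilon$, together with the fact that the (a priori singular) integral defining $\beta$ is meaningful.

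First I would observe that on $[0,1]\setminus\{c\}$ the function $\bar\varepsilon(t)=(t-c)^{-1}\int_c^t\varepsilon(s)\,ds$ is $C^1$, being the quotient of the $C^1$ function $t\mapsto\int_c^t\varepsilon$ and the $C^1$ function $t\mapsto t-c$, the latter not vanishing there. Differentiating, and using $\tfrac{d}{dt}\int_c^t\varepsilon=\varepsilon(t)$, gives
\[
\bar\varepsilon'(t)=\frac{\varepsilon(t)}{t-c}-\frac{1}{(t-c)^2}\int_c^t\varepsilon(s)\,ds=\frac{\varepsilon(t)-\bar\varepsilon(t)}{t-c}=\frac{\delta(t)}{t-c},\qquad t\neq c .
\]
Thus the integrand in the definition of $\beta$ is exactly $\bar\varepsilon'$ away from the point $c$.

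Next, fixing $x>c$ (the case $x<c$ is symmetric, and $\beta(c)=0=\bar\varepsilon(c)$ trivially), I would apply the fundamental theorem of calculus on $[s,x]$ for $c<s<x$ — a genuine interval on which $\bar\varepsilon$ is $C^1$ — to get $\int_s^x\frac{\delta(t)}{t-c}\,dt=\bar\varepsilon(x)-\bar\varepsilon(s)$. Letting $s\downarrow c$ and using that $\bar\varepsilon$ is continuous at $c$ with $\bar\varepsilon(c)=0$ (established just before the lemma), the right-hand side converges to $\bar\varepsilon(x)$; hence the integral defining $\beta(x)$ converges and $\beta(x)=\bar\varepsilon(x)$. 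Since $x$ was arbitrary, $\beta=\bar\varepsilon$ on $[0,1]$, so $\beta$ is continuous and $\delta+\beta=\varepsilon$.

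I expect the only point requiring care to be the behaviour at $c$: the integrand $\delta(t)/(t-c)$ may fail to be bounded as $t\to c$, so $\beta$ has to be read as the improper integral $\lim_{s\to c}\int_s^x$, and it is precisely the continuity at $c$ of the averaged function $\bar\varepsilon$ that makes this limit exist — this is the one place where the particular averaging definition of $\bar\varepsilon$ (rather than $\varepsilon$ itself) is used. Everything else is routine one-variable calculus.
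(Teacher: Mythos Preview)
Your proof is correct and follows essentially the same route as the paper: both arguments differentiate $\bar\varepsilon$ (you via the quotient rule, the paper via the product rule applied to $(x-c)\bar\varepsilon(x)=\int_c^x\varepsilon$) to obtain $\bar\varepsilon'(t)=\delta(t)/(t-c)$ on $[0,1]\setminus\{c\}$, and then integrate. Your version is slightly more explicit in treating the integral defining $\beta$ as improper at $c$ and invoking the continuity of $\bar\varepsilon$ there, a point the paper passes over without comment.
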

\begin{proof}
The definition of $\delta$ gives $\bar {\varepsilon} = \varepsilon - \delta$, which is differentiable on
$[0, 1]
\setminus \left\{ c \right\}$, and
\begin{eqnarray*}
\varepsilon(x) & = &  \left ( (x-c)(\varepsilon-\delta)(x) \right)^{'}\\
& = & \varepsilon(x) - \delta(x) +(x-c) (\varepsilon-\delta)^{'}(x).
\end{eqnarray*}
Hence, $$\delta(x) = (x-c) (\varepsilon-\delta)^{'}(x).$$
This implies
$$\varepsilon(x) = \delta(x) + \int_c^x \frac{1}{t-c} \; \delta(t) 
dt = \delta(x)+\beta(x).$$
\end{proof}

\begin{defn}
Let $f:[0, 1] \rightarrow [0, 1]$ be unimodal map with critical point $c \in (0, 1)$. We say $f$ is $C^{2+|\cdot|}$ if and only
if
$$ \hat{\beta} : x \longmapsto \int_c^x \;\frac{1}{|t-c|} \;|\delta(t)|dt$$
is continuous.
\end{defn}

\begin{rem}
Every $C^{2+\alpha}$ H\"older unimodal map, $\alpha > 0$, is 
$C^{2+|\cdot|}.$
\end{rem}

\begin{rem}
The very weak condition of local monotonicity of $D^2f$ is sufficient for $f$ to be $C^{2+|\cdot|}$.
\end{rem}

\begin{rem}
$C^{2+|\cdot|}$ unimodal maps are dense in $C^2$.
\end{rem}

\begin{rem}
There exists $C^2$ unimodal maps which are not $C^{2+|\cdot|}$. See also
remark \ref{finalrem}.
\end{rem}

The {\it non-linearity} $\eta_{\phi}: [0, 1] \rightarrow
\mathbb{R}$ of a $C^1$ diffeomorphism \break $\phi:[0, 1]
\rightarrow [0, 1]$ is given by
$$\eta_{\phi}(x) = D \; ln D\phi(x),$$
wherever it is defined.

\begin{prop} \label{non-lin-bound}
Let $f$ be a $C^{2+|\cdot|}$ unimodal map with critical point $c \in (0, 1)$. There exist
diffeomorphisms
$$\phi_{\pm}:[0, 1] \rightarrow [0, 1]$$
such that
\begin{displaymath}
f(x) = \left\{ \begin{array}{ll}
\phi_{+} \left( q_c(x) \right) & x \in [c, \;1]\\
\phi_{-} \left( q_c(x) \right) & x \in [0, \;c]
\end{array} \right.
\end{displaymath}
with
$$\eta_{\phi_{\pm}} \in L^1([0, 1]).$$
\end{prop}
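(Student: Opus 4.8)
The plan is to construct $\phi_\pm$ explicitly from the data already set up in the section, namely the decomposition $\varepsilon = \delta + \beta$ of the relative second derivative, and then to verify the two claimed properties: that $\phi_\pm$ are genuine diffeomorphisms of $[0,1]$ and that their non-linearities are integrable. First I would work on the branch $x \in [c,1]$, where $q_c$ is a decreasing $C^\infty$ diffeomorphism from $[c,1]$ onto $[0,1]$; since both $q_c$ and $f|_{[c,1]}$ are $C^1$ with a single common critical point at $c$ (of quadratic type in both cases), the composition $\phi_+ := f \circ (q_c|_{[c,1]})^{-1}$ is automatically a $C^1$ map of $[0,1]$. The content is that $\phi_+$ is actually a \emph{diffeomorphism}, i.e. $D\phi_+ > 0$ throughout, including at the endpoint corresponding to the critical point. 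This is where the formula $Df(x) = E(x-c)(1+\bar\varepsilon(x))$ is used: writing $u = q_c(x)$ so that $1-u = ((x-c)/(1-c))^2$ and $Dq_c(x) = -2(x-c)/(1-c)^2$, the chain rule gives $D\phi_+(u) = Df(x)/Dq_c(x) = -\tfrac{E(1-c)^2}{2}(1+\bar\varepsilon(x))$, which is bounded away from $0$ because $1+\bar\varepsilon$ is continuous and nonvanishing on $[0,1]$ (as recorded earlier). The same computation handles $\phi_-$ on $[0,c]$, with signs arranged so that $D\phi_->0$.

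Next I would compute the non-linearity. Differentiating $\ln D\phi_+(u) = \mathrm{const} + \ln(1+\bar\varepsilon(x(u)))$ with respect to $u$ gives
\begin{equation*}
\eta_{\phi_+}(u) = \frac{\bar\varepsilon'(x)}{1+\bar\varepsilon(x)} \cdot \frac{dx}{du}.
\end{equation*}
From the lemma in this section, $\bar\varepsilon' = (\varepsilon - \bar\varepsilon)'$ and $\delta(x) = (x-c)(\varepsilon-\bar\varepsilon)'(x)$, so $\bar\varepsilon'(x) = \delta(x)/(x-c)$. Also $dx/du = 1/Dq_c(x) = -(1-c)^2/(2(x-c))$. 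Hence
\begin{equation*}
|\eta_{\phi_+}(u)| = \frac{(1-c)^2}{2\,|1+\bar\varepsilon(x)|} \cdot \frac{|\delta(x)|}{(x-c)^2},
\end{equation*}
and the change of variables $du = Dq_c(x)\,dx = -\tfrac{2(x-c)}{(1-c)^2}dx$ turns $\int_0^1 |\eta_{\phi_+}(u)|\,du$ into a constant multiple of $\int_c^1 \frac{|\delta(x)|}{|x-c|}\,dx$, which is finite precisely because $f$ is $C^{2+|\cdot|}$, i.e. because $\hat\beta(x) = \int_c^x \frac{|\delta(t)|}{|t-c|}\,dt$ is continuous (hence bounded) on $[0,1]$. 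The branch $\phi_-$ is identical after reflecting across $c$.

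The main obstacle — or at least the only place requiring care rather than routine computation — is the behavior at the critical endpoint: one must check that $\phi_+$ extends $C^1$-smoothly up to the point $u=1$ (the image of $c$) with positive derivative there, rather than merely on the open interval. This is exactly what the factorization $Df(x) = E(x-c)(1+\bar\varepsilon(x))$ buys us, since it shows $Df$ vanishes to exactly first order at $c$ and matches the first-order vanishing of $Dq_c$, so the quotient has a finite positive limit; the continuity of $\bar\varepsilon$ then gives continuity of $D\phi_+$ up to the endpoint. A secondary point is confirming that the two branches combine consistently — but the proposition only asserts the piecewise formula, so no matching of $\phi_+$ and $\phi_-$ at $c$ is needed. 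Finally I would remark that nothing beyond the $C^{2+|\cdot|}$ hypothesis is used for the $L^1$ bound on $\eta_{\phi_\pm}$, which is the property needed later to control cross-ratio distortion.
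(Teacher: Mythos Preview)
Your argument is correct and follows essentially the same route as the paper: define $\phi_\pm$ via the branch inverses of $q_c$, use $Df(x)=E(x-c)(1+\bar\varepsilon(x))$ to see $D\phi_\pm$ is bounded away from zero, derive the formula $\eta_{\phi_+}(q_c(x)) = -\tfrac{(1-c)^2}{2}\cdot\tfrac{\delta(x)}{(1+\bar\varepsilon(x))(x-c)^2}$, and then change variables so that $\int_0^1|\eta_{\phi_+}|$ is controlled by $\int_c^1 \tfrac{|\delta|}{|x-c|}\,dx$, finite by the $C^{2+|\cdot|}$ hypothesis. One small slip: in your citation of the lemma you wrote $\delta(x)=(x-c)(\varepsilon-\bar\varepsilon)'(x)$, but the lemma gives $\delta(x)=(x-c)(\varepsilon-\delta)'(x)=(x-c)\bar\varepsilon'(x)$; your conclusion $\bar\varepsilon'(x)=\delta(x)/(x-c)$ is nonetheless correct.
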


\begin{proof}
It is plain that there exists a $C^1$ diffeomorphism
$$\phi_{+}: [0, \;1] \rightarrow [0, \;1]$$
such that for $x \in [c, 1]$
$$f(x) = \phi_{+} \left( q_c(x) \right).$$
We will analyze the nonlinearity of $\phi_{+}$. Observe that:
$$Df(x)  = -2\; \frac{(x-c)}{(1-c)^2} \;\cdot\;D\phi_{+} \left( q_c(x) 
\right)$$ 
and
\begin{eqnarray}
D^2 f(x) & = & 4\; \frac{(x-c)^2}{(1-c)^4} \;\cdot\;D^2 \phi_{+} \left( q_c(x)
\right)  -2\; \frac{1}{(1-c)^2}\;\cdot
D\phi_{+} \left( q_c(x) \right) \nonumber \\
& = & E \;(1+\varepsilon(x)). \label{non-lin-bd-eqn2}
\end{eqnarray}
As we have seen before, we also have
$$Df(x) = E \;(x-c) \cdot \left( 1 + \bar{\varepsilon}(x) \right).$$
This implies that
\begin{eqnarray} \label{non-lin-bd-eqn}
\eta_{\phi_{+}} \left( q_c(x) \right) = \frac{-(1-c)^2}{2}
\;\cdot \;\frac{\varepsilon(x)-\bar{\varepsilon}(x)}{1+\bar{\varepsilon}(x)}
\;\cdot \frac{1}{(x-c)^2}.
\end{eqnarray}
Therefore, by performing the substitution $u = q_c(x)$, we get:
\begin{eqnarray}\label{inteta}
\int_0^1 |\eta_{\phi}(u)| \;du & = & \int_1^c -2\; |\eta_{\phi_{+}} \left( q_c(x)
\right)| \; \frac{x-c}{(1-c)^2} \;dx \\
& = & \int_c^1 \;
\frac{|\varepsilon(x)-\bar{\varepsilon}(x)|}{1+\bar{\varepsilon}(x)}\; \frac{1}{x-c}\;dx\\
& \leq & \frac{1}{\min\;(1+\bar{\varepsilon})}\;\int_c^1 \frac{|\delta(x)|}{|x-c|} \; dx < \; \infty
\end{eqnarray}
We have proved $\eta_{\phi_{+}} \in L^1([0, 1])$. Similarly one can prove the existence
of a $C^1$ diffeomorphism
$$\phi_{-}: [0, \;1] \rightarrow [0, \;1]$$
such that for $x \in [0, c]$
$$f(x) = \phi_{-}(q_c(x))$$
and
$$\eta_{\phi_{-}} \in L^1([0, 1]).$$
\end{proof}

\section{Distortion of cross ratios}

\begin{defn} Let $J \subset T \subset [0, 1]$ be open and bounded
intervals such that $T \setminus J$ consists of two components $L$ and $R$.
Define the cross ratios of these intervals as
$$D(T, J) = \frac{|J||T|}{|L| |R|}.$$
If $f$ is continuous and monotone on $T$ then define the cross ratio 
distortion of $f$ as
$$ B(f,T,J)= \frac{D(f(T),f(J))}{D(T,J)}.$$
If $f^n|_T$ is monotone and continuous then
\begin{displaymath}
B(f^n, T, J) = \prod_{i=0}^{n-1} B \left(f, f^i(T),f^i(J) \right).
\end{displaymath}
\end{defn}

\begin{defn}
Let $f:[0, 1] \rightarrow [0, 1]$ be a unimodal map and $T \subset [0, 1]$. We say that
$$\left\{ f^i(T) : 0 \leq i \leq n  \right \}$$
has intersection multiplicity $m \in \mathbb{N}$ if and only if for every $x \in [0, 1]$
$$\# \left \{ i \leq n \;\vert \;x \in f^i(T) \right \} \leq m$$
and $m$ is minimal with this property.
\end{defn}

\begin{thm} \label{cr-exp}
Let $f:[0, 1] \rightarrow [0, 1]$ be a $C^{2+|\cdot|}$ unimodal map with critical point
$c \in (0, 1).$ Then there exists $K > 0$,
such that the following holds. If $T$ is an interval such that $f^n|_{T}$ is a diffeomorphism then for any interval $J
\subset T$ with
$cl(J) \subset int(T)$ we have,
$$B(f^n,T,J) \geq \exp \;\{-K \cdot m\}$$
where $m$ is the intersection multiplicity of $\left\{ f^i(T) : 0 \leq i \leq n \right \}.$
\end{thm}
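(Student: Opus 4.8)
The plan is to split each iterate of $f$ into a ``quadratic part'' that never decreases the cross ratio $D$ and a ``diffeomorphism part'' whose total nonlinearity along the orbit of $T$ is controlled by the intersection multiplicity $m$, via the Propositon~\ref{non-lin-bound} decomposition. First, write $f(x)=\phi_{+}(q_c(x))$ for $x\in[c,1]$ and $f(x)=\phi_{-}(q_c(x))$ for $x\in[0,c]$, where by Proposition~\ref{non-lin-bound} the maps $\phi_{\pm}:[0,1]\to[0,1]$ are $C^1$ diffeomorphisms with $\ln D\phi_{\pm}$ absolutely continuous and $\eta_{\phi_{\pm}}\in L^1([0,1])$; put $C_0=\int_0^1|\eta_{\phi_{+}}|+\int_0^1|\eta_{\phi_{-}}|<\infty$. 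Since $f^n|_T$ is a diffeomorphism, each interval $f^i(T)$, $0\le i\le n-1$, lies to one side of $c$; let $s_i\in\{+,-\}$ record that side, so $f|_{f^i(T)}=\phi_{s_i}\circ q_c$ with $q_c$ monotone on $f^i(T)$ and $f^{i+1}(T)=\phi_{s_i}(q_c(f^i(T)))$. Applying the multiplicative cocycle property of $B$, first to the factorization $f=\phi_{s_i}\circ q_c$ on $f^i(T)$ and then over $i$ (and noting that $cl(J)\subset int(T)$ is preserved by all the homeomorphisms involved), one gets
\[
B(f^n,T,J)=\Bigl(\prod_{i=0}^{n-1}B\bigl(q_c,f^i(T),f^i(J)\bigr)\Bigr)\cdot\Bigl(\prod_{i=0}^{n-1}B\bigl(\phi_{s_i},q_c(f^i(T)),q_c(f^i(J))\bigr)\Bigr).
\]

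For the first product I would use that on each side of $c$ the map $q_c$ is affinely conjugate to $t\mapsto t^2$, and an elementary computation shows $t\mapsto t^2$ does not decrease $D$ (equivalently, a map with negative Schwarzian does not decrease $D$); hence $B(q_c,f^i(T),f^i(J))\ge 1$ and the first product is $\ge 1$. For each factor of the second product I would prove the elementary diffeomorphism estimate: if $g:T'\to g(T')$ is a $C^1$ diffeomorphism with $\ln Dg$ absolutely continuous and a.e.\ derivative $\eta_g$, and $cl(J')\subset int(T')$ with $T'\setminus J'=L'\cup R'$, then applying the mean value theorem for integrals to $Dg$ on each of $J',T',L',R'$ gives points $\xi_{J'},\xi_{T'}\in T'$, $\xi_{L'}\in L'$, $\xi_{R'}\in R'$ with
\[
\ln B(g,T',J')=\ln Dg(\xi_{J'})+\ln Dg(\xi_{T'})-\ln Dg(\xi_{L'})-\ln Dg(\xi_{R'})\ge-2\int_{T'}|\eta_g|,
\]
the last step since $|\ln Dg(a)-\ln Dg(b)|\le\int_{T'}|\eta_g|$ for $a,b\in T'$. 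Combining the two products,
\[
B(f^n,T,J)\ge\exp\Bigl(-2\sum_{i=0}^{n-1}\int_{q_c(f^i(T))}|\eta_{\phi_{s_i}}|\Bigr).
\]

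It remains to bound the exponent by a constant times $m$, which is the crux. I would show that the collection $\{q_c(f^i(T)):0\le i\le n-1\}$ has intersection multiplicity at most $2m$: a point $u\in[0,1]$ has at most two $q_c$-preimages (symmetric about $c$), and if $u\in q_c(f^i(T))$ then $f^i(T)$ must contain one of these preimages; since by hypothesis each point of $[0,1]$ lies in at most $m$ of the intervals $f^i(T)$, at most $2m$ indices $i$ satisfy $u\in q_c(f^i(T))$. Splitting the sum according to $s_i=+$ and $s_i=-$ and applying this overlap bound to each sub-collection,
\[
\sum_{i=0}^{n-1}\int_{q_c(f^i(T))}|\eta_{\phi_{s_i}}|\le 2m\int_0^1|\eta_{\phi_{+}}|+2m\int_0^1|\eta_{\phi_{-}}|=2m\,C_0,
\]
so that $B(f^n,T,J)\ge\exp(-4C_0\,m)$; the constant $K=4C_0$ depends only on $f$.

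The only genuine obstacle is the overlap-counting step, i.e.\ converting the hypothesis on the intersection multiplicity of $\{f^i(T)\}$ into control of $\sum_i\int_{q_c(f^i(T))}|\eta_{\phi_{s_i}}|$; the point is that the at-most-two-to-one nature of $q_c$ makes the loss a harmless factor of $2$. A secondary technical point is that $\phi_{\pm}$ is only $C^1$ with $L^1$ nonlinearity, so the diffeomorphism estimate must be carried out with the mean value theorem for integrals and the absolute continuity of $\ln D\phi_{\pm}$ rather than with a pointwise second derivative.
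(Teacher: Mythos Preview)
Your proof is correct and follows essentially the same route as the paper's: decompose $f=\phi_{\pm}\circ q_c$ via Proposition~\ref{non-lin-bound}, use that $q_c$ expands the cross ratio $D$, bound each diffeomorphism factor by $\exp\bigl(-2\int_{q_c(f^i(T))}|\eta_{\phi_{s_i}}|\bigr)$, and sum using the intersection multiplicity. Your overlap-counting argument for the images $q_c(f^i(T))$ (at most $2m$ via the two-to-one nature of $q_c$) and your use of the integral form $\int_{T'}|\eta_g|$ rather than a pointwise mean value of $\eta_g$ are in fact a bit more explicit than the corresponding passage in the paper, which is appropriate since $\eta_{\phi_{\pm}}$ is only in $L^1$.
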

\begin{proof}
Observe that $q_c$ expands cross-ratios. Then Proposition $\ref{non-lin-bound}$
implies
$$B \left(f,f^i(T),f^i(J)\right) >  \frac{D\phi_i(j_i)
\cdot D\phi_i(t_i)}{D\phi_i(l_i) \cdot D\phi_i(r_i)}$$
where $\phi_i = \phi_{+} \;\text{or}\;\phi_{-}$ depending whether
$f^i(T) \subset
[c, \;1] \;\text{or}\; [0,\;c]$
and
\begin{eqnarray*}
j_i & \in & q_c\left(f^i(J)\right), \\
t_i & \in & q_c\left(f^i(T)\right), \\
l_i & \in & q_c\left(f^i(L)\right), \\
r_i & \in & q_c\left(f^i(R)\right).
\end{eqnarray*}
Thus
$$
\begin{aligned}
& ln \;B(f^n,T,J)  = \sum_{i=0}^{n-1} \;ln \;B \left(f,f^i(T),f^i(J)\right)  \geq  \\
&  \sum_{i=0}^{n-1}   \left( ln \;D\phi_i(j_i)-ln \;D\phi_i(l_i) \right) +\left( ln
\;D\phi_i(t_i)-ln \;D\phi_i(r_i) \right)  \geq  \\
- & \sum_{i=0}^{n-1}  |\eta_{\phi_i} (\xi^1_i)| \; |j_i-l_i| + |\eta_{\phi_i}
(\xi^2_i)| \; |t_i-r_i|  \geq  \\
- & 2 \;m  \left(\int|\eta_{\phi_{+}}| + \int|\eta_{\phi_{-}}| \right) = -K \cdot m.
\end{aligned}
$$
Therefore $$B(f^n,T,J) \geq \exp \;\{-K \cdot m\}.$$
\end{proof}
The previous Theorem allows us to apply the Koebe Lemma. See \cite{MS} for a proof.

\begin{lem} (Koebe Lemma) \label{koebe}
For each $K_1 > 0$, $0 < \tau < 1/4$, there exists $K < \infty$
with the following property:\\
Let $g:T \rightarrow g(T) \subset [0,
1]$ be a $C^1$ diffeomorphism on some interval $T$. Assume that
for any intervals $J^*$ and $T^{*}$ with $J^{*} \subset T^{*}
\subset T$ one has
$$B(g, T^{*}, J^{*}) \geq K_1 > 0,$$
for an interval $M \subset T$ such that $cl(M) \subset int(T)$. Let $L, R$
be the components of $T \setminus M$. Then, if:
$$ \frac{|g(L)|}{|g(M)|} \geq \tau \;\; and \;\; \frac{|g(R)|}{|g(M)|} \geq
\tau$$
we have:
$$\forall x, y \in M, \qquad    \frac{1}{K} \leq 
\frac{|g^{'}(x)|}{|g^{'}(y)|} \leq K.$$
\end{lem}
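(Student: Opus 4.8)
The plan is to reconstruct the standard proof of the Koebe distortion principle (the excerpt already refers to \cite{MS} for it). Its skeleton is: reduce to a \emph{macroscopic} distortion estimate for $g$ on $M$; prove that estimate from the cross--ratio hypothesis together with the hypothesis that $g(M)$ lies well inside $g(T)$; and recover the pointwise statement about $g'$ by a limiting argument. Throughout take $g$ orientation preserving and write $M=[a,b]\subset[p,q]=T$, with gaps $L=[p,a]$ and $R=[b,q]$.

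The reduction: it suffices to prove that there is $\bar K=\bar K(K_1,\tau)<\infty$ with
$$\frac1{\bar K}\,\frac{|J|}{|M|}\ \le\ \frac{|g(J)|}{|g(M)|}\ \le\ \bar K\,\frac{|J|}{|M|}\qquad\text{for every interval }J\subseteq M .$$
Letting $J$ shrink to a point $z\in M$ turns this into $g'(z)\in[\bar K^{-1},\bar K]\cdot|g(M)|/|M|$, so $|g'(x)|/|g'(y)|\le\bar K^{2}=:K$ for all $x,y\in M$, which is the conclusion. The first ingredient towards the displayed inequality is a ``space in the domain'' step: from the single instance $B(g,T,M)\ge K_1$, i.e. $D(g(T),g(M))\ge K_1 D(T,M)$, one expands the two cross ratios and uses $|g(L)|,|g(R)|\ge\tau|g(M)|$ to conclude that $M$ is also well inside $T$, $|L|\ge\rho|M|$ and $|R|\ge\rho|M|$ for some $\rho=\rho(K_1,\tau)>0$ (and similarly for the intermediate configurations $M\subseteq M'\subseteq T$). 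This is a routine manipulation of ratios --- monotonicity of $t\mapsto t/(t+s)$, the $\tau$--bounds controlling the image side from below --- and it is what makes the remaining estimates uniform in $K_1,\tau$.

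For the lower bound on $g'$, apply $B(g,T,J^{*})\ge K_1$ with $J^{*}=(z-\varepsilon,z+\varepsilon)$, divide by $|J^{*}|$ and let $\varepsilon\to0$ (the quotient $|g(J^{*})|/|J^{*}|$ tends to $g'(z)$); this yields
$$g'(z)\ \ge\ K_1\,\frac{|T|}{|[p,z]|\,|[z,q]|}\cdot\frac{|g([p,z])|\,|g([z,q])|}{|g(T)|}.$$
Here $|g([p,z])|+|g([z,q])|=|g(T)|$ because $[p,z]\cup[z,q]=T$, while $g([p,z])\supseteq g(L)$ and $g([z,q])\supseteq g(R)$, so with $|g(L)|,|g(R)|\ge\tau|g(M)|$ the last factor is at least $\tfrac12\tau|g(M)|$; the space step (after, if needed, first shrinking $T$ to the sub--interval $T'\supseteq M$ on which $g(M)$ is exactly $\tau$--well inside, so that $|g(T')|\le(1+2\tau)|g(M)|$) bounds $|[p,z]|,|[z,q]|$ above by a constant times $|M|$. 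Hence $g'(z)\ge c\,|g(M)|/|M|$ with $c=c(K_1,\tau)>0$.

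The matching upper bound $g'(z)\le C\,|g(M)|/|M|$ is the crux, and is where one must use the hypothesis in full (that $B\ge K_1$ holds for \emph{all} nested configurations, with $K_1$ possibly small): it cannot be obtained by dualizing the previous estimate, since $g^{-1}$ only satisfies $B(g^{-1},\cdot,\cdot)\le 1/K_1$, an upper bound. The standard route is by contradiction: if $g'(z_0)\ge\Lambda\,|g(M)|/|M|$ for some $z_0\in M$, then --- the lower bound already forcing $g'$ to be comparable to $|g(M)|/|M|$ everywhere on $M$ while $\int_M g'=|g(M)|$ --- there is a sub--interval $W\ni z_0$ of $M$ with $|g(W)|$ a definite fraction of $|g(M)|$ but $|W|$ small compared to $|M|$; enlarging $W$ to a configuration $T^{*}\subseteq T$ one of whose two gaps avoids $W$, and inserting it into $B(g,T^{*},\cdot)\ge K_1$, forces that gap to be stretched so much that $B<K_1$ once $\Lambda$ is large in terms of $K_1,\tau,\rho$ --- a contradiction. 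The delicate part is the bookkeeping that keeps the choice of $W$ and $T^{*}$ and all constants dependent only on $K_1$ and $\tau$; this is precisely what is carried out in \cite{MS}, which I would follow. Combining the two pointwise bounds gives the asserted $K=K(K_1,\tau)$.
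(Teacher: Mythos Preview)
The paper does not give a proof of this lemma; it simply writes ``See \cite{MS} for a proof'' and quotes the result as a black box. So there is no argument in the paper to compare yours against.

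Your sketch follows the standard route of the Koebe distortion principle as presented in de~Melo--van~Strien: obtain space in the domain from space in the image via one application of the cross-ratio bound, derive a pointwise lower bound on $g'$ by letting $J^{*}$ shrink to a point, and then argue the matching upper bound. The first two steps are essentially correct as written. For the upper bound you explicitly defer to \cite{MS} (``this is precisely what is carried out in \cite{MS}, which I would follow''), so in effect you are doing the same thing the paper does, just one level deeper: you expose the lower-bound half and cite the same source for the rest. A minor remark: your parenthetical about shrinking $T$ to $T'$ so that $|g(T')|\le(1+2\tau)|g(M)|$ does not by itself give an \emph{upper} bound on $|L'|,|R'|$ in terms of $|M|$ from the inequality $B\ge K_1$ alone---that direction needs the macroscopic distortion estimate you are in the middle of proving, so the bookkeeping there is circular as stated; the clean way is to feed the configuration $J^{*}=[a,z]$ (or $[z,b]$) with one gap equal to $L$ (or $R$) directly into $B\ge K_1$, which is how \cite{MS} arranges it.
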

\begin{rem}
The conclusion of the Koebe-Lemma is summarized by saying that $g|_{M}$ has bounded
distortion.
\end{rem}

\section{A priori bounds} \label{ap bounds}

Let $f$ be an infinitely renormalizable $C^{2+|\cdot|}$ unimodal map with quadratic tip at $c \in
(0, 1)$. Let $I_0^n =[f^{2^n}(c), f^{2^{n+1}}(c)]$
be the central interval whose first return map corresponds to the $n^{th}$-renormalization. Here, we study the geometry
of the cycle consisting of the intervals
$$I_j^n =f^j(I_0^n), \;\; j=0,1,\dots,2^n-1.$$
Notice that $$I_j^{n+1}, I_{j+2^n}^{n+1} \subset I_j^n, \;\; j=0,1,\dots,2^n-1.$$
Let $I_l^n$ and $I_r^n$ be the direct neighbors of $I_j^n$ for $3 \leq j \leq 2^n.$

\begin{lem} \label{int-exp} For each $1 \leq i < j$, There exists an interval $T$
which contains $I_{i}^{n}$, such that $f^{j-i}:T \rightarrow [I_{l}^{n},  I_{r}^{n}]$
is monotone and onto.
\end{lem}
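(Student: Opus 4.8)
The plan is to construct $T$ by pulling the target interval $[I_l^n,I_r^n]$ backwards along the cycle $I_0^n\to I_1^n\to\cdots\to I_{2^n-1}^n$, and to show that the critical point never enters the interior of a pull-back, so that each step is a diffeomorphism. Concretely, write $M:=[I_l^n,I_r^n]$ and define a sequence by backward induction: set $T_j:=M$, and having defined $T_{k+1}$ for $i\le k<j$, let $T_k$ be the connected component of $f^{-1}(T_{k+1})$ containing $I_k^n$. This is legitimate because $f(I_k^n)=I_{k+1}^n$, and one checks along the way that $I_{k+1}^n\subset T_{k+1}$, so $I_k^n\subset f^{-1}(T_{k+1})$. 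Put $T:=T_i$. By construction $f^{j-i}(T)\subseteq M$ and $f^{j-i}(I_i^n)=I_j^n\subseteq M$; the map $f^{j-i}|_T$ will be monotone and onto $M$ once we know that for every $k$ with $i\le k\le j-1$ the restriction $f|_{T_k}$ is monotone, i.e.\ $c\notin\operatorname{int}(T_k)$, since a connected component of the $f$-preimage of an interval on which $f$ is monotone is carried homeomorphically onto that interval.

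The heart of the argument is therefore the size estimate $c\notin\operatorname{int}(T_k)$ for $i\le k\le j-1$. Since $i\ge 1$, every index appearing is $\ge1$, and $c$ lies in the interior of the critical interval $I_0^n$ only, so $c\notin I_k^n$ for these $k$; the only danger is that an enlarged interval $T_k$ reaches into $\operatorname{int}(I_0^n)$. To rule this out I would prove, again by backward induction on $k$, the stronger statement that $T_k$ is contained in the convex hull of $I_k^n$ together with its two immediate level-$n$ neighbours, and that this hull avoids $\operatorname{int}(I_0^n)$. For the inductive step one uses that away from $c$ the map $f$ is a homeomorphism sending each $I_m^n$ onto $I_{m+1}^n$ while respecting the spatial order of the intervals $I_m^n$ on the relevant region, so the component of $f^{-1}(T_{k+1})$ around $I_k^n$ cannot protrude past the neighbours of $I_k^n$. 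The delicate case is the step landing on an interval adjacent to $I_0^n$ — in particular the passage through $I_1^n$, which carries the critical value $f(c)=1$ as an endpoint — where one must invoke the period-doubling combinatorics: the precise way $I_0^n=[f^{2^n}(c),f^{2^{n+1}}(c)]$ and its sub-pieces $I_0^{n+1}$ and $I_{2^n}^{n+1}$ nest inside $I_0^n$, with $c$ sitting quadratically between them, to see that the pull-back of $M$ still stops short of $c$. This bookkeeping — tracking which interval $I_m^n$ is spatially adjacent to which, and how the fold over $c$ acts — is the main obstacle; everything else is routine.

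With $c\notin\operatorname{int}(T_k)$ established for $i\le k\le j-1$, each $f:T_k\to T_{k+1}$ is a monotone homeomorphism onto, hence so is the composition $f^{j-i}:T=T_i\to M=[I_l^n,I_r^n]$; and $I_i^n\subset T$ by construction. (A minor technical point, the identification of the image with the \emph{closed} interval $[I_l^n,I_r^n]$, is handled by taking $M$ open in the pull-back and passing to closures at the end, noting that each $T_k$ lies well inside $(0,1)$.) This is precisely the assertion of the lemma.
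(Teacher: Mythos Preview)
Your pull-back strategy is natural, but the proof has a genuine gap precisely at the step you yourself call ``the main obstacle.'' The inductive invariant you propose --- that $T_k$ lies in the convex hull of $I_k^n$ and its two level-$n$ neighbours, and that this hull avoids $\operatorname{int}(I_0^n)$ --- is false as stated: whenever $I_0^n$ \emph{is} one of the spatial neighbours of $I_k^n$, the hull certainly meets $\operatorname{int}(I_0^n)$, and it is exactly at such $k$ that you must know $c\notin\operatorname{int}(T_k)$ to keep the induction alive. Your inductive step (``$f$ respects the spatial order'') already presupposes that $f$ is monotone on the hull, i.e.\ that $c$ is not there; so the argument is circular at the only indices where the conclusion is in doubt. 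Saying one ``must invoke the period-doubling combinatorics'' for the passage near $I_1^n$ is not a proof, and you have not proposed a replacement invariant that survives the fold at $c$.

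The paper sidesteps all of this by arguing in the opposite direction. It takes $T$ to be the \emph{maximal} interval containing $I_i^n$ on which $f^{j-i}$ is monotone, and reasons forward from the endpoints: by maximality, an endpoint $b$ of $T$ satisfies $f^k(b)=c$ for some $k<j-i$; since $i+k<j\le 2^n$ one has $c\notin I_{i+k}^n$, so $f^{k+1}(T)$ is an interval reaching the critical value $1$ and hence contains $I_1^n$; monotonicity of $f^{j-i-(k+1)}$ on $I_1^n$ then forces $f^{j-i}(T)$ to contain some $I_m^n\ne I_j^n$, which, being in the same interval as $I_j^n$, must cover the appropriate neighbour $I_l^n$ or $I_r^n$. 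The only combinatorial fact used is that $f^m|_{I_1^n}$ is monotone iff $m<2^n$ --- no tracking of spatial adjacencies is needed. If you want to salvage the pull-back route you would have to formulate and prove a sharper invariant at the steps adjacent to the critical interval; the maximal-domain argument is both shorter and avoids that difficulty entirely.
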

\begin{proof}
Let $T \subset [0, 1]$ be the maximal interval which contains $I_{i}^n$ such
that $f^{j-i}|_{T}$ is
monotone. Such interval exists because of monotonicity of $f^{j-i}|_{I_i^n}$. The
boundary points of $T$ are $a, b \in [0, 1]$. Suppose $f^{j-i}(b)$ is to the right
of $I_j^n$. The maximality of $T$ ensures
the existence of $k$, $k < j-i$ such that $f^k(b)=c$. Because $i+k < j \leq 2^n$,
we have $c \notin I_{i+k}^n$ and so $f^{k+1}(T) \supset I_1^n$.
Moreover,
$f^{j-i-(k+1)}|_{f^{k+1}(T)}$ is monotone. Hence $f^{j-i-(k+1)}|_{I_1^n}$ is
monotone. So $1+j-i-(k+1) \leq 2^n.$ This implies that
$f^{j-i}(T)$ contains $I_{1+j-i-(k+1)}^n$. In particular $f^{j-i}(T)$ contains
$I_r^n$. Similarly we can prove $f^{j-i}(T)$ contains $I_l^n$.
\end{proof}

\begin{lem} (Intersection multiplicity) \label{int-mul}
Let $f^{j-i}:T \rightarrow [I_l^n, I_r^n]$ be monotone and onto with $T \supset I_i^n$.
Then for all $x \in [0, 1]$
$$ \# \{k < j-i \;\vert\;f^k(T) \ni x\} \leq 7.$$
\end{lem}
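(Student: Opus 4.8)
The plan is to prove the intersection multiplicity bound $\#\{k<j-i : f^k(T)\ni x\}\le 7$ by tracking how the itinerary of the interval $T$ under iteration relates to the fixed cycle of intervals $\{I^n_s : 0\le s\le 2^n-1\}$. The starting observation is that since $f^{j-i}:T\to[I^n_l,I^n_r]$ is monotone and onto with $T\supset I^n_i$, each forward image $f^k(T)$ for $0\le k<j-i$ is an interval on which $f$ is monotone, it contains $f^k(I^n_i)=I^n_{i+k}$ (indices taken along the cycle, so $I^n_{i+k}$ makes sense since $i+k<j\le 2^n$), and because of the maximality built into Lemma~\ref{int-exp} it is contained in the ``next-neighbour hull'' $[I^n_{l(i+k)},I^n_{r(i+k)}]$ of $I^n_{i+k}$ among the level-$n$ intervals. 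So $f^k(T)$ is squeezed between an interval of the $n$-cycle and the union of that interval with its two immediate neighbours in the natural left-to-right ordering of the $2^n$ intervals $I^n_s$.

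With that picture, fix a point $x\in[0,1]$ and count the indices $k<j-i$ with $x\in f^k(T)$. For such $k$, $x$ lies in $[I^n_{l(i+k)},I^n_{r(i+k)}]$, i.e. $x$ is within one ``slot'' (in the linear order of the $I^n_s$) of the interval $I^n_{i+k}$. First I would argue that the level-$n$ intervals $I^n_s$ have pairwise disjoint interiors and that $x$ can lie in the closure of at most two of them; more to the point, the set of indices $s$ such that $x\in[I^n_{l(s)},I^n_{r(s)}]$ — that is, $x$ is in $I^n_s$ or in a gap adjacent to $I^n_s$ — has bounded cardinality. Actually a cleaner route: the indices $i+k\pmod{2^n}$ that can occur are exactly those $s$ for which $x\in[I^n_{l(s)},I^n_{r(s)}]$. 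In the linear order, $x$ determines a position; the intervals $I^n_s$ whose neighbour-hull contains $x$ are the (at most two) intervals containing $x$ together with their left/right neighbours — giving at most a fixed small number $M$ of \emph{spatial} slots $s$. Then I must bound, for each admissible spatial slot $s$, how many distinct \emph{times} $k<j-i$ with $i+k\equiv s\pmod{2^n}$ can arise; since $0\le k<j-i\le 2^n-1$, the map $k\mapsto i+k$ is injective on this range, so at most one $k$ per slot $s$, hence the total is at most $M$, and the content of the lemma is that $M\le 7$.

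The key combinatorial input, then, is the precise count of spatial slots: I would verify that an arbitrary point $x$ lies in at most two of the closed intervals $\mathrm{cl}(I^n_s)$, and that being in a gap between consecutive level-$n$ intervals puts $x$ into the neighbour-hull $[I^n_{l(s)},I^n_{r(s)}]$ of at most the two intervals flanking that gap. Combining, $x$ belongs to $[I^n_{l(s)},I^n_{r(s)}]$ for at most... one counts carefully: if $x\in\mathrm{int}(I^n_{s_0})$, the relevant $s$ are $s_0$ and its two neighbours, so $3$; if $x$ sits in a gap, the flanking intervals $s_0,s_0+1$ and their outer neighbours contribute, plus the nesting relation $I^{n+1}_{s},I^{n+1}_{s+2^n}\subset I^n_s$ matters because the ``neighbours'' $I^n_l,I^n_r$ in Lemma~\ref{int-exp} are defined for $3\le j\le 2^n$ and one must be careful near the ends of the cycle and at the critical interval $I^n_0$. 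Pushing this bookkeeping through — accounting for the at-most-two endpoint effects and the special role of the interval containing $c$ — yields the constant $7$. The main obstacle I expect is precisely this careful accounting: getting the exact constant requires checking that no point sees more than two level-$n$ intervals, that adjacency in the dynamical neighbour sense $[I^n_l,I^n_r]$ matches adjacency in the spatial order, and that the boundary cases (indices near $0$ and near $2^n$, and the interval straddling the critical point) do not inflate the count beyond $7$; the injectivity-in-time argument and the disjointness of the $I^n_s$ are routine by comparison.
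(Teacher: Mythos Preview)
Your argument rests on the claim that each iterate $f^k(T)$ is contained in the neighbour-hull $[I^n_{l(i+k)},I^n_{r(i+k)}]$ of $I^n_{i+k}$, and you attribute this to ``the maximality built into Lemma~\ref{int-exp}.'' But Lemma~\ref{int-exp} says nothing of the sort: it controls the \emph{image} $f^{j-i}(T)$, showing it contains (and, after shrinking $T$, equals) the neighbour-hull of $I^n_j$. It gives no bound whatsoever on the \emph{domain} $T$ or on the intermediate iterates $T_k=f^k(T)$. At this stage of the argument no a~priori bounds are available---indeed this lemma is a step toward proving them---so there is no distortion control that would let you compare the size of $T_k$ to the size of $H_{i+k}$. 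The containment can genuinely fail: if $T_k$ happens to contain the neighbouring interval $I^n_{r(i+k)}$, then applying $f^{j-i-k}$ sends that interval to some $I^n_{r(i+k)+j-i-k}\subset[I^n_l,I^n_r]$, which is perfectly consistent provided $r(i+k)+j-i-k\in\{l,r\}$. So there is no contradiction, and your spatial-slot count collapses.

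The paper's argument avoids this issue entirely. Rather than bounding each $T_k$ from the outside, it fixes the target $U=[I^n_l,I^n_r]$ and asks, for each $k$ with $T_k\cap U\neq\emptyset$ and $i+k\notin\{l,j,r\}$, what $U\cap T_k$ can look like. The key observation is that the endpoints of $T$ map under $f^{j-i}$ to $\partial U$, which lies in the forward orbit of $c$; hence $f^k(\partial T)$ consists of points that are either already in the orbit of $c$ or are preimages of $c$. Tracing this through, one finds that the endpoint of $T_k$ lying inside $U$ must be either a boundary point of $I^n_l$ or $I^n_r$, or the unique preimage $c_l\in I^n_l$ (resp.\ $c_r\in I^n_r$) of $c$ under the appropriate iterate. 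This forces $U\cap T_k$ to equal one of the four sets $I^n_l,\,C_l,\,I^n_r,\,C_r$. Each of these four possibilities can occur for at most one $k$, and together with the three exceptional indices $i+k\in\{l,j,r\}$ this yields the bound $7$. The combinatorics you anticipated as the hard part is in fact routine; the substance lies in this endpoint analysis, which your outline is missing.
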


\begin{proof}
Without loss of generality we may restrict ourselves to estimate the intersection
multiplicity at a point $x \in U$, where
$$U = [I_l^n, I_r^n] = [u_l, u_r].$$
Let $c_l \in  I_l^n$ such that $f^{2^n-l}(c_l) = c$ and
$$C_l = [u_l, c_l] \subset I_l^n.$$
Similarly, define
$$C_r = [c_r, u_r] \subset I_r^n.$$
Let $T_k = f^k(T), \qquad k=0,1,....j-i.$\\
{\it Claim:
If $i+k \notin \left \{l,j,r \right \}$ and $T_k \cap U \neq \emptyset$ then
\begin{itemize}
\item [(i)] $I_{i+k}^n \cap U = \emptyset$
\item [(ii)]$ U \cap T_k = I_l^n$ or $C_l$ or $I_r^n$ or $C_r$.
\end{itemize} }
Let $T \setminus I_i^n = L \cup R$ and then we may assume $U \cap
T_k = U \cap L_k$ where \break $L_k = f^k(L)$. This holds because
$I_{i+k}^n \cap U = \emptyset$. Consider the situation where
$$I_r^n \cap L_k \neq \emptyset.$$ The other possibilities can be
treated similarly. Notice that $I_r^n$ cannot be strictly
contained
 in $L_k$. Otherwise there would be a third ``neighbor" of $I_j^n$ in $U.$
Let $a = \partial{L} \cap \partial{T}.$
Notice that $$f^k(a) \in \partial{L_k} \cap I_r^n.$$
Furthermore,
$$f^{j-k}(f^k(a))  \in \partial{U}.$$
This means $f^{j-k}(f^k(a))$ is a point in the orbit of $c$. This holds because
all boundary points of the interval $I_j^n$ are in the orbit of $c$. Hence,
$f^k(a)$ is a point in the orbit of $c$ or $f^k(a)$ is a preimage of $c$. The
first possibility implies $f^k(a) \in \partial{I_r^n}.$ This implies
$$U \cap T_k = U \cap L_k = I_r^n.$$
The second possibility implies $f^k(a) = c_r$ which means
$$U \cap T_k = U \cap L_k = C_r.$$
This finishes the proof of claim. This claim gives $7$ as bound for the intersection
multiplicity.
\end{proof}

\begin{prop} \label{bd}
For $j < 2^n, \;f^{2^{n}-j} : I_j^n \rightarrow I_0^n$ has uniform bounded distortion.
\end{prop}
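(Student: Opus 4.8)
The plan is to recognise $f^{2^{n}-j}|_{I_j^{n}}$ as the core of a diffeomorphism with uniformly bounded cross ratio distortion and definite Koebe space, and then to invoke the Koebe Lemma~\ref{koebe}. Fix $n\ge 1$ and $1\le j<2^{n}$, and write $p=2^{n}-j$, $g=f^{p}$, so that $g(I_j^{n})=I_0^{n}$ and $g|_{I_j^{n}}$ is a diffeomorphism. First I would apply Lemma~\ref{int-exp} with the pair $(i,j)$ replaced by $(j,2^{n})$, obtaining an interval $T\supset I_j^{n}$ on which $g$ is monotone and which $g$ maps homeomorphically onto $[I_l^{n},I_r^{n}]$, the two-sided neighbourhood of $I_0^{n}=I_{2^{n}}^{n}$ formed by its direct level-$n$ neighbours. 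Writing $L,R$ for the two components of $T\setminus I_j^{n}$, the images $g(L),g(R)$ are then the two components of $[I_l^{n},I_r^{n}]\setminus g(I_j^{n})$, and they contain $I_l^{n}$ and $I_r^{n}$ respectively.

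Next I would control the cross ratio distortion. By Lemma~\ref{int-mul} the chain $\{\,f^{k}(T):0\le k<p\,\}$ has intersection multiplicity at most $7$, and since $f^{k}(T^{*})\subset f^{k}(T)$ for every subinterval $T^{*}\subset T$, the same bound holds for $\{\,f^{k}(T^{*})\,\}$. Hence Theorem~\ref{cr-exp} yields
$$
B(g,T^{*},J^{*})\ \ge\ \exp\{-7K\}\ =:\ K_{1}\ >\ 0
$$
for all intervals $J^{*}\subset T^{*}\subset T$ with $cl(J^{*})\subset int(T^{*})$, with $K$ the constant of Theorem~\ref{cr-exp}; this bound is independent of $n$ and $j$, and it is exactly the cross ratio hypothesis required by the Koebe Lemma~\ref{koebe} for the triple $(g,T,M)$ with $M=I_j^{n}$.

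The remaining, and main, obstacle is to supply a definite Koebe space: a universal $\tau\in(0,1/4)$ with $|g(L)|/|g(I_j^{n})|\ge\tau$ and $|g(R)|/|g(I_j^{n})|\ge\tau$. Since $g(L)\supset I_l^{n}$, $g(R)\supset I_r^{n}$ and $|g(I_j^{n})|\le|I_0^{n}|$, this reduces to the \emph{real bound} $|I_l^{n}|\asymp|I_0^{n}|\asymp|I_r^{n}|$, uniformly in $n$, i.e.\ the comparability of the central interval with its level-$n$ neighbours. This does not come for free from combinatorics, since a priori the intervals of the cycle $\{I_i^{n}\}$ may have wildly different sizes, and I expect its proof to be the hard part. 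I would establish it by the standard a priori bounds bootstrap (cf.\ \cite{M2},\cite{MMSS},\cite{MS}): first record that each $I_i^{m}$ strictly contains the two level-$(m+1)$ intervals it carries, hence a first, non-uniform ratio estimate; then recycle that estimate through Theorem~\ref{cr-exp} and the Koebe Lemma applied to the first return maps $f^{2^{m}}|_{I_0^{m}}$ to promote it to a bound uniform in $m$, using the shortest interval of each cycle to propagate the estimate to all indices.

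Granting the real bound and choosing $\tau$ accordingly, the Koebe Lemma~\ref{koebe}, applied to $g$ with the constants $K_{1}$ and $\tau$ just produced, yields a universal $K<\infty$ with
$$
\frac1K\ \le\ \frac{|g^{\prime}(x)|}{|g^{\prime}(y)|}\ \le\ K\qquad\text{for all }x,y\in I_j^{n},
$$
which is precisely the claimed uniformly bounded distortion of $f^{2^{n}-j}:I_j^{n}\to I_0^{n}$. Since the final $K$ depends only on $K_{1}=e^{-7K}$ and on $\tau$, it is independent of $n$ and $j$, as required.
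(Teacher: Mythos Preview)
Your overall scaffolding is exactly right and matches the paper: extend $f^{2^n-j}$ monotonically to $T\supset I_j^n$ via Lemma~\ref{int-exp}, bound the intersection multiplicity by Lemma~\ref{int-mul}, feed this into Theorem~\ref{cr-exp} to get a uniform lower cross-ratio bound, and finish with the Koebe Lemma~\ref{koebe}. You also correctly isolate the only nontrivial point: producing a uniform $\tau$ with $|g(L)|,|g(R)|\ge\tau\,|I_0^n|$, i.e.\ definite space around $I_0^n$ inside $[I_l^n,I_r^n]$.

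The gap is in how you propose to obtain that space. Your sketch (``recycle through the first return maps $f^{2^m}|_{I_0^m}$ to promote it to a bound uniform in $m$'') is the a~priori bounds argument of Proposition~\ref{thm-ap}, but in the paper's logical order that proposition \emph{uses} Proposition~\ref{bd}; invoking it here would be circular. Moreover, $f^{2^m}|_{I_0^m}$ is unimodal, so Koebe does not apply to it directly. The paper avoids any inter-level bootstrap and works entirely at level $n$ by the \emph{smallest interval trick}: pick $j_0$ with $|I_{j_0}^n|$ minimal. Then the two neighbours of $I_{j_0}^n$ are automatically at least as long, so the Koebe space on the image side is $\tau=1$. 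Apply Lemma~\ref{int-exp} with source $I_1^n$ and target $I_{j_0}^n$; Lemma~\ref{int-mul}, Theorem~\ref{cr-exp} and the Koebe Lemma then pull this space back to give $|L_n^0|,|R_n^0|\ge\tau_0|I_1^n|$ around $I_1^n$. One further pullback by $f$ (using only Proposition~\ref{non-lin-bound}, i.e.\ the quadratic factorisation) transports this to definite space around $I_0^n$; a short combinatorial check shows this space sits inside $W_n=[I_{l_n}^n,I_{r_n}^n]$, hence $|W_n\setminus I_0^n|\ge\tau_1|I_0^n|$ on each side. Only \emph{then} does one run your argument for general $j$. You had the phrase ``shortest interval of each cycle'' in your sketch, which is the right seed; but it must be used at the fixed level $n$ to manufacture space around $I_0^n$ \emph{before} Proposition~\ref{bd} is available, not as part of a multi-level recursion.
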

\begin{proof}
Step$1:$ Choose $j_0 < 2^n$, such that for all $j \leq 2^n,$ we have
\break $|I_{j_0}^n|
\leq |I_{j}^n|$. By Lemma $\ref{int-exp}$ there
exists an interval neighborhood \break
$T_n = L_n^0 \cup I_1^n \cup R_n^0$ such that
$f^{j-1}: T_n \rightarrow [I_l^n, I_r^n] \supset I_{j_0}^n$ is monotone and onto.
Lemma $\ref{int-mul}$ together with Theorem $\ref{cr-exp}$ allow us to apply
the Koebe Lemma $\ref{koebe}$. So, there exists
$\tau_0 > 0 $ such that
$$|L_n^0|, |R_n^0| \geq {\tau_0} \;|I_1^n|.$$
Let $U_n = I_0^n$, $V_n = f^{-1} \left(L_n^0 \cup I_1^n \cup R_n^0 \right)$
and let $L_n^1, R_n^1$ be the components of
$V_n \setminus U_n$. From Proposition $\ref{non-lin-bound}$ we get $\tau_1 > 0$
such that
$$|L_n^1|, |R_n^1| \geq \tau_1 \; |U_n|.$$
Step$2:$ Suppose $W_n = [I_{l_n}^n, I_{r_n}^n]$, where $I_{l_n}^n, I_{r_n}^n$ are the direct neighbors of $U_n$. We
claim that $V_n \subset W_n$. Suppose it is not. Then, say
$I_{r_n}^n \subset int (V_n)$ implies that $f(I_{r_n}^n) \subset int (L_n^1)$. So, $f^{j_0-1}|_{f(I_{r_n}^n)}$ is
monotone, implies that
$r_n + j_0 \leq 2^n$ and
$f^{j_0}(I_{r_n}^n) \subset int ([I_l^n, I_r^n])$.
This contradiction concludes that $V_n \subset W_n$.

\noindent
Step$3:$ Let $L_n, R_n$ be the components of $W_n \setminus U_n$. Then
$$|L_n|, |R_n| \geq \tau_1 \;|U_n|.$$
Step$4:$ For all $j < 2^n$, there exists an interval neighborhood $T_j$ which contains
$I_j^n$ such that $f^{2^n-j}: T_j \rightarrow
W_n$ is monotone and
onto. Now Proposition $\ref{bd}$ follows from the
Lemma $\ref{int-mul}$ together with Theorem $\ref{cr-exp}$ and the Koebe Lemma $\ref{koebe}$.
\end{proof}

\begin{cor} \label{derivative-bound}
There exists a constant $K$ such that
$$\big| Df^{2^n}|_{I_0^n} \big| \leq K.$$
\end{cor}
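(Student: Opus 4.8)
The plan is to factor $f^{2^n}|_{I_0^n}$ as the ``turn'' $f|_{I_0^n}$, which contains the critical point, followed by the diffeomorphic return $f^{2^n-1}|_{I_1^n}$, to estimate each factor separately, and to check that the two length factors cancel.

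Write $I_0^n=[a,b]$; by renormalizability $c\in[a,b]$, and $f$ maps $[a,c]$ and $[c,b]$ monotonically onto intervals with right endpoint $f(c)=1$. Since $f(I_0^n)=I_1^n$ and, as used in Proposition~\ref{bd}, $f^{2^n-1}\colon I_1^n\to I_0^n$ is a monotone bijection, the chain rule gives, for $x\in I_0^n$,
$$Df^{2^n}(x)=Df^{2^n-1}\big(f(x)\big)\cdot Df(x).$$
For the last factor, $Df(c)=0$ and $c\in I_0^n$, so with $M:=\sup_{[0,1]}|D^2f|<\infty$ (finite since $f$ is $C^{2+|\cdot|}$) we get $|Df(x)|\le M\,|I_0^n|$ for all $x\in I_0^n$. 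For the first factor, Proposition~\ref{bd} (case $j=1$) provides a constant $C_1\ge 1$, independent of $n$, bounding the distortion of $f^{2^n-1}\colon I_1^n\to I_0^n$; since that map carries $I_1^n$ onto $I_0^n$, the mean value theorem yields $y_0\in I_1^n$ with $|Df^{2^n-1}(y_0)|=|I_0^n|/|I_1^n|$, and bounded distortion then gives $|Df^{2^n-1}(y)|\le C_1\,|I_0^n|/|I_1^n|$ for all $y\in I_1^n$. Multiplying the two estimates,
$$\big|Df^{2^n}(x)\big|\ \le\ C_1\,M\,\frac{|I_0^n|^2}{|I_1^n|}\,,\qquad x\in I_0^n.$$

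It remains to bound $|I_0^n|^2/|I_1^n|$ by a constant depending only on $f$; this is where the quadratic tip enters. Since $f$ is monotone on $[a,c]$ and on $[c,b]$ with maximum $f(c)=1$, we have $|I_1^n|=|f([a,b])|=\max\{\,1-f(a),\,1-f(b)\,\}$. By Proposition~\ref{non-lin-bound}, $f=\phi_{\pm}\circ q_c$ with $\phi_{\pm}\colon[0,1]\to[0,1]$ a $C^1$ diffeomorphism, and $\phi_{\pm}(1)=1$ (evaluate at $x=c$); as $D\phi_{\pm}$ is continuous and strictly positive on the compact interval $[0,1]$, it is bounded below by some $m_f>0$, so for $x\in I_0^n$
$$1-f(x)\ =\ \phi_{\pm}(1)-\phi_{\pm}\big(q_c(x)\big)\ \ge\ m_f\big(1-q_c(x)\big)\ =\ m_f\Big(\frac{x-c}{1-c}\Big)^{2}\ \ge\ m_f\,(x-c)^2\,.$$
Applying this at whichever of $a,b$ is farther from $c$, at distance at least $|I_0^n|/2$, gives $|I_1^n|\ge \tfrac{m_f}{4}\,|I_0^n|^2$. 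Substituting into the previous display, $\big|Df^{2^n}(x)\big|\le 4C_1 M/m_f=:K$ for all $x\in I_0^n$ and all $n$, which is the assertion.

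The argument is routine once Proposition~\ref{bd} is in place; the only step deserving attention is the lower bound $|I_1^n|\ge\tfrac{m_f}{4}|I_0^n|^2$, which must hold with a constant uniform in $n$. That uniformity is precisely the uniform quadratic behaviour of $f$ at $c$ supplied by the factorization $f=\phi_{\pm}\circ q_c$, used together with the fact that $c\in I_0^n$ for every $n$ (so the estimate for $1-f$ applies on all of $I_0^n$); one should also keep track that $C_1$, $m_f$, and $M$ depend only on $f$, not on $n$.
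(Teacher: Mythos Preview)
Your proof is correct and follows essentially the same route as the paper's: factor $f^{2^n}|_{I_0^n}$ as $f^{2^n-1}|_{I_1^n}\circ f|_{I_0^n}$, use Proposition~\ref{bd} for the diffeomorphic piece and the quadratic decomposition $f=\phi_{\pm}\circ q_c$ for the turn, and observe that the resulting length factors cancel via $|I_1^n|\gtrsim|I_0^n|^2$. The paper's proof is a bit more terse---it simply quotes Proposition~\ref{non-lin-bound} for both $|Df(x)|\le K_2|x-c|$ and $|I_1^n|\ge K_2^{-1}|I_0^n|^2$---whereas you spell out the latter bound explicitly; otherwise the two arguments are the same.
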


\begin{proof} Let $x \in I_1^n.$ Then from Proposition $\ref{bd}$ we get $K_1 > 0$
such that for some $x_0 \in I_1^n$
\begin{eqnarray*}
|Df^{2^n-1}(x)| & = & \frac{|I_0^n|} {|I_1^n|} \cdot \bigg\{ \frac{Df^{2^n-1}(x)}
{Df^{2^n-1}(x_0)} \bigg\} \\
& \leq & \frac{|I_0^n|} {|I_1^n|} \cdot K_1.
\end{eqnarray*}
Proposition $\ref{non-lin-bound}$ implies that there exists $K_2 > 0$ such that for
$x \in I_0^n$
$$|Df(x)| \leq K_2 \cdot |x-c|$$
and
$$|I_1^n| \geq \frac{1}{K_2} \cdot |I_0^n|^2.$$
Now for $x \in I_0^n$
\begin{eqnarray*}
|Df^{2^n}(x)| & \leq & K_2 \cdot |x-c| \cdot \frac{|I_0^n|} {|I_1^n|} \cdot K_1 \\
& \leq & K_2 \cdot K_1 \cdot \frac{|I_0^n|^2}{|I_1^n|} \;\leq
K_2^2 \cdot K_1 = K
\end{eqnarray*}
Therefore, we conclude that
$\big|Df^{2^n}|_{I_0^n} \big| \leq K.$
\end{proof}

\begin{defn} (A priori bounds) \label{def-ap}
Let $f$ be infinitely renormalizable. We say $f$ has a priori bounds if there
exists $\tau > 0$ such that for all $n \geq 1$ and $j \leq 2^n$ we have
\begin{eqnarray}
\tau & < & \frac{|I_j^{n+1}|}{|I_j^n|}, \;\;  \frac{|I_{j+2^n}^{n+1}|} {|I_j^n|} \label{ratio} \\
\tau & < & \frac{|I_j^n \setminus \left( I_j^{n+1} \cup I_{j+2^n}^{n+1}  \right)|} {|I_j^n|}
\label{gaps}
\end{eqnarray}
where, $I_j^{n+1}, I_{j+2^n}^{n+1}$ are the intervals of next generation
contained in $I_j^n$.
\end{defn}

\begin{prop} \label{thm-ap}
Every infinitely renormalizable $C^{2+|\cdot|}$ map has a priori bounds.
\end{prop}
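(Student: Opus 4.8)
The plan is to reduce the statement to a uniform control of the geometry inside the central interval $I_0^n$, and then to read off that control from the quadratic nature of the critical tip together with the bounded‑distortion estimates already proved.

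\emph{Reduction to the centre.} First I would invoke Proposition~\ref{bd}: for every $j<2^n$ the map $f^{2^n-j}\colon I_j^n\to I_0^n$ is a diffeomorphism whose distortion is bounded by a constant independent of $n$ and $j$, and it respects the nested structure, carrying the two level‑$(n+1)$ subintervals of $I_j^n$ onto $I_0^{n+1}$ and $I_{2^n}^{n+1}$ and the complementary gap onto the complementary gap. Hence every ratio occurring in $(\ref{ratio})$ and $(\ref{gaps})$ is comparable, with universal constants, to the corresponding ratio at $j=0$, and it suffices to produce $\tau>0$, independent of $n$, with
$$|I_0^{n+1}|,\quad |I_{2^n}^{n+1}|,\quad \big|I_0^n\setminus(I_0^{n+1}\cup I_{2^n}^{n+1})\big|\ \ge\ \tau\,|I_0^n|.$$

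\emph{The fold at $c$ and the lower bounds.} By Proposition~\ref{non-lin-bound} one has $f=\phi_\pm\circ q_c$ on the two sides of $c$ with $\eta_{\phi_\pm}\in L^1([0,1])$; since $|\ln D\phi_\pm(x)-\ln D\phi_\pm(y)|\le\int|\eta_{\phi_\pm}|$, the $\phi_\pm$ have bounded distortion on all of $[0,1]$, with a universal bound. Consequently $R_n=f^{2^n}|_{I_0^n}=(f^{2^n-1}|_{I_1^n})\circ\phi_\pm\circ q_c$ is, on each side of $c$, the composition of $q_c$ with a diffeomorphism of uniformly bounded distortion (Proposition~\ref{bd} controls the factor $f^{2^n-1}|_{I_1^n}$). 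The quadratic computation already used for Corollary~\ref{derivative-bound} then yields the scaling relations $|I_1^n|\asymp|I_0^n|^2$ and $|I_{2^n}^{n+1}|=|R_n(I_0^{n+1})|\asymp|I_0^{n+1}|^2/|I_0^n|$, with universal constants. Combining the second relation with the fact that $R_n$ carries $I_{2^n}^{n+1}$ diffeomorphically onto $I_0^{n+1}$ with bounded distortion (the level‑$(n+1)$ instance of Proposition~\ref{bd}) and with the estimate $DR_n\asymp \mathrm{dist}(c,I_{2^n}^{n+1})/|I_0^n|$ on $I_{2^n}^{n+1}$, I would obtain $\mathrm{dist}(c,I_{2^n}^{n+1})\cdot|I_0^{n+1}|\asymp|I_0^n|^2$; since each factor is at most $|I_0^n|$, this forces $|I_0^{n+1}|\asymp|I_0^n|$, $\mathrm{dist}(c,I_{2^n}^{n+1})\asymp|I_0^n|$, and then $|I_{2^n}^{n+1}|\asymp|I_0^n|$.

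\emph{The gap — the main obstacle.} There remains the gap $G_n=I_0^n\setminus(I_0^{n+1}\cup I_{2^n}^{n+1})$. Because $G_n$ lies beyond $R_n^3(c)\in I_{2^n}^{n+1}$, outside the range of one branch of $R_n$, it has a single $R_n$‑preimage $G_n^-\subset$ (the monotone branch containing $[a,c]$), adjacent to the preimage of $I_{2^n}^{n+1}$; transporting $|G_n^-|$ through the quadratic map $R_n=\Psi_-\circ q_c$ near $c$, using $D\Psi_-\asymp|I_0^n|/|a-c|^2$, gives $|G_n|\gtrsim|I_0^n|$ provided $c$ sits at comparable distance from both endpoints of $I_0^{n+1}$. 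The hard part is the borderline configuration in which $c$ nearly coincides with an endpoint of $I_0^{n+1}$ (equivalently, $|a-c|\ll|I_0^n|$): there the heuristic derivative estimates degenerate and one must replace them by a genuine bounded‑distortion statement, obtained by choosing an extended domain via Lemma~\ref{int-exp} whose orbit has intersection multiplicity at most $7$ (Lemma~\ref{int-mul}), using Theorem~\ref{cr-exp} to bound its cross‑ratio distortion uniformly, and then applying the Koebe Lemma~\ref{koebe}. Executing this last point, and — more importantly — checking that every implied constant in the preceding steps is genuinely independent of $n$ (which it is, precisely because Proposition~\ref{bd} and Theorem~\ref{cr-exp} are proved without assuming any a priori bounds), is the crux; the remaining verifications are routine bookkeeping.
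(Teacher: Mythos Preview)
Your reduction to the centre and the derivation of the ratio bounds $|I_0^{n+1}|\asymp|I_0^n|$ and $|I_{2^n}^{n+1}|\asymp|I_0^n|$ are correct, though the route is more elaborate than the paper's. The paper uses Corollary~\ref{derivative-bound} directly: writing $I_0^n=[a_n,a_{n-1}]$ with $a_n=f^{2^n}(c)$, one has $|I_{2^n}^{n+1}|=|f^{2^n}([a_n,c])|\le K_1\,|a_n-c|^2/|I_0^n|$ (the quadratic fold), while $[a_n,c]\subset I_0^{n+1}=f^{2^n}(I_{2^n}^{n+1})$ and $|Df^{2^n}|\le K$ give $|a_n-c|\le K\,|I_{2^n}^{n+1}|$. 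Chaining these two inequalities yields $|a_n-c|\ge|I_0^n|/(KK_1)$ in one stroke. Your two-sided relation $|I_0^{n+1}|\cdot\mathrm{dist}(c,I_{2^n}^{n+1})\asymp|I_0^n|^2$ reaches the same conclusion, and the bounded-distortion input you need (in particular that $|x-c|$ varies by a bounded factor over $I_{2^n}^{n+1}$) is indeed supplied by the level-$(n+1)$ instance of Proposition~\ref{bd}; this is fine.

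The genuine gap is in your treatment of $G_n$. You take $R_n$-preimages, worry about a ``borderline configuration'' near the critical point, and propose to resolve it with an ad hoc Koebe argument that you do not carry out. None of that is needed, and the preimage approach is the wrong direction. The paper instead looks \emph{forward}: set $H_n=f^{2^n}(G_n)$ and observe the combinatorial inclusion $H_n\supset I_{3\cdot 2^n}^{n+2}$ (the image under $R_n$ of the gap between $I_0^{n+1}$ and $I_{2^n}^{n+1}$ runs from $\partial I_0^{n+1}$ into $I_{2^n}^{n+1}$ and covers one of its level-$(n+2)$ children). Corollary~\ref{derivative-bound} then gives $K\,|G_n|\ge|H_n|$, and the ratio bounds already established at level $n+1$ yield $|H_n|\ge|I_{3\cdot 2^n}^{n+2}|\ge\tau_3\,|I_{2^n}^{n+1}|\ge\tau_3\tau_2\,|I_0^n|$. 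That is the whole argument for the gap: one inclusion, one derivative bound, and two already-proved inequalities. There is no borderline case and no extra Koebe step.
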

\begin{proof}
Step$1.$ There exists $\tau_1 > 0$ such that
$\displaystyle{\frac{|I_0^{n+1}|}{|I_0^n|}} > \tau_1$.\\
Let $I_0^n = [a_n, a_{n-1}]$ be the central interval, and so $a_n = f^{2^n}(c)$.
A similar argument as in the proof of Corollary $\ref{derivative-bound}$ gives $K_1
> 0$ such that
$$|f^{2^n}([a_n, c])| \leq  \left( \frac{|a_n-c|} {|I_0^n|} \right)^2 \cdot |I_0^n| \cdot K_1.$$
Notice that
$$f^{2^n}([a_n, c]) = I_{2^n}^{n+1}.$$
Thus
$$|I_{2^n}^{n+1}| \leq  \frac{|a_n-c|^2} {|I_0^n|} \cdot K_1.$$
Note  $$f^{2^n}(I_{2^n}^{n+1}) = I_0^{n+1} \supset [a_n, c].$$
Therefore, by Corollary $\ref{derivative-bound}$
$$ |a_n -c| \leq |f^{2^n}(I_{2^n}^{n+1})| \leq K \cdot |I_{2^n}^{n+1}| \leq K \cdot
\frac{|a_n-c|^2} {|I_0^n|} \cdot K_1.$$
This implies $$|a_n -c | \geq \frac{1}{K} \cdot |I_0^n|.$$
Which proves $\displaystyle{\frac{|I_0^{n+1}|}{|I_0^n|} > \tau_1}.$\\
Step$2.$ There exists $\tau_2 >0$ such that
$\displaystyle{\frac{|I_{2^n}^{n+1}|} {|I_0^n|}} \geq \tau_2$.\\
From above we get
$$ \tau_1 |I_0^n| \leq |I_0^{n+1}| = |f^{2^n}(I_{2^n}^{n+1})| \leq K
\cdot |I_{2^n}^{n+1}|$$
This proves $$\displaystyle{\frac{|I_{2^n}^{n+1}|}{|I_0^n|} \geq \tau_2}.$$
Step$3.$ There exists $\tau_3 >0$ such that the following holds.
$$\frac{|I_j^{n+1}|}{|I_j^n|}, \;\;  \frac{|I_{j+2^n}^{n+1}|} {|I_j^n|} \geq \tau_3.$$
Because
$$f^{2^n-j}(I_j^{n+1}) =I_0^{n+1}, \;\; f^{2^n-j}(I_j^n) = I_0^n$$
and from Proposition $\ref{bd}$ we get a $K > 0$ such that
$$ \frac{|I_j^{n+1}|}{|I_j^n|} \geq \frac{1}{K} \cdot \frac{|I_0^{n+1}|} {|I_0^n|} \geq \frac{\tau_1}{K}.$$
Hence, $\displaystyle{\frac{|I_j^{n+1}|}{|I_j^n|} \geq \tau_3}$.
Similarly we prove
$\displaystyle{\frac{|I_{j+2^n}^{n+1}|}{|I_j^n|} \geq \tau_3}.$ Which completes the
proof of $(\ref{ratio}).$\\
Step$4.$ To complete the proof of the Proposition, it remains to show that
the gap between the intervals
$I_0^{n+1}, I_{2^n}^{n+1}$ and as well as $I_j^{n+1}, I_{j+2^n}^{n+1}$ are not too
small. Let $$G_n = I_0^n \setminus \left(I_0^{n+1} \cup I_{2^n}^{n+1} \right).$$
We claim that there exists $\tau_4 >0$ such that
$$\displaystyle{\frac{|G_n|}{|I_0^n|} \geq \tau_4}.$$
Let $H_n$ be the image of $G_n$ under $f^{2^n}$. Then  $H_n = f^{2^n}(G_n) \supset
I_{3 \cdot 2^n}^{n+2}$.
The claim follows by using Corollary $\ref{derivative-bound}$ and the bounds we
have so far. Namely,
$$K \cdot |G_n| \geq |H_n| \geq  |I_{3 \cdot 2^n}^{n+2}|  \geq \tau_3 \cdot |I_{2^n}^{n+1}| \geq \tau_3 \cdot \tau_2
\cdot |I_0^n|.$$
This implies
$$|G_n| \geq \tau_4 \cdot |I_0^n|.$$
Step$5.$ Let $G_j^n = I_j^n \setminus \left( I_{j}^{n+1} \cup I_{j+2^n}^{n+1} \right)$,
then there exists $\tau_5 >0$ such that
$$\displaystyle{\frac{|G_j^n|}{|I_j^n|} \geq  \tau_5}.$$
We have $f^{2^n-j}(G_j^n) = G_n$
and $f^{2^n-j}(I_j^n)=I_0^n$. Since $f^{2^n-j}$ has bounded distortion, we
immediately get a constant $K > 0$ such that
$$\frac{|G_j^n|}{|I_j^n|} \geq \frac{1}{K} \cdot \frac{|G_n|}{|I_0^n|} \geq \frac{\tau_4}{K}.$$
This implies $$|G_j^n| \geq \tau_5 \cdot |I_j^n|.$$
This completes the proof of $(\ref{gaps})$.
\end{proof}

\section{Approximation of $f|_{I_j^n}$ by a quadratic map} \label{approx qudratic}

Let $\phi : [0, 1] \rightarrow [0, 1]$ be an orientation preserving $C^2$
diffeomorphism with non-linearity $\eta_{\phi} : [0, 1] \rightarrow \mathbb{R}.$
The norm we consider is
$$|\phi| = |\eta_{\phi}|_{0}.$$
Let $[a, b] \subset [0, 1]$ and $f:[a, b] \rightarrow f([a, b])$
be a diffeomorphism. Let
$$1_{[a\;b]}:[0, 1] \rightarrow  [a, b]$$
and
$$1_{f([a, b])}:[0, 1] \rightarrow f([a, b])$$
be the affine homeomorphisms with $1_{[a, b]}(0) = a \; \text{and} \; 1_{f([a, b])}(0) = f(a).$
The rescaling $f_{[a, b]}:[0, 1] \rightarrow [0, 1]$ is the diffeomorphism
$$f_{[a, b]} = \left( 1_{f([a, b])} \right)^{-1} \circ f \circ 1_{[a, b]}.$$
We say that $0 \in [0, 1]$ corresponds to $a \in [a, b]$.

\begin{prop} \label{approx-qudratic-prop}
Let $f$ be an infinitely renormalizable $C^{2+|\cdot|}$ map with critical point $c \in (0, 1)$. For $n \geq 1$
and $1 \leq j < 2^n$ we have
$$f_{I_j^n} = \phi_{j}^n \circ q_j^n$$
where
$$q_j^n = (q_c)_{I_j^n}: [0, 1] \rightarrow [0, 1]$$
such that $0$ corresponds to $f^j(c) \in I_j^n$ and
$\phi_j^n : [0, 1] \rightarrow [0, 1]$ a $C^2$ diffeomorphism.
Moreover
$$\lim_{n \to \infty} \sum_{j=1}^{2^n-1} |\phi_j^n|= 0$$
\begin{proof}
If $I_j^n \subset [c, 1]$ then use Proposition \ref{non-lin-bound} and 
define
$$\phi_j^n = (\phi_{+})_{q_c(I_j^n)}:[0, 1] \rightarrow [0, 1]$$
such that $0 \in [0, 1]$ corresponds to $q_c \left(f^j(c) \right) \in q_c(I_j^n)$.
In case $I_j^n \in [0, c]$ then let
$$\phi_j^n = (\phi_{-})_{q_c(I_j^n)}:[0, 1] \rightarrow [0, 1]$$
where again $0 \in [0,1]$ corresponds to $q_c \left(f^j(c) \right) \in q_c(I_j^n)$.
Let $\eta_j^n$ be the non-linearity of $\phi_j^n$. Then the chain rule for
non-linearities \cite{M} gives
$$|\eta_j^n(x)| = |q_c(I_j^n)| \cdot |\eta_{\phi_{\pm}}(1_j^n(x))|$$
where $1_j^n:[0,1] \rightarrow q_c(I_j^n)$ is the affine homeomorphism such that
$1_j^n(0) = q_c(f^j(c)).$ Now use $(\ref{non-lin-bd-eqn})$ to get
\begin{eqnarray*}
|\eta_j^n|_0 & \leq & |q_c(I_j^n)| \cdot \frac{(1-c)^2}{2} \cdot
\frac{1}{\min_{x \in I_j^n} \left( 1+\bar{\epsilon}(x) \right)} \cdot
\sup_{x \in I_j^n} \frac{|\delta(x)|}{(x-c)^2}\\
& \leq & \frac{1}{\min_{x \in [0, 1]} \left(1+\bar{\epsilon}(x)\right)} \cdot
|\zeta_j^n-c| \cdot |I_j^n| \cdot \sup_{x \in I_j^n} \frac{|\delta(x)|}{|x-c|^2}
\end{eqnarray*}
where $$|Dq_c(\xi_j^n)| = \frac{|q_c(I_j^n)|}{|I_j^n|}$$
and $\xi_j^n \in I_j^n.$ The a priori bounds gives $K_1 > 0$ such that
$$dist(c, I_j^n) \geq  \frac{1}{K_1} \cdot |I_j^n|.$$
This implies that for some $K > 0$
$$|\eta_j^n| \leq \; K \cdot \sup_{x \in I_j^n} \frac{|\delta(x)|}{|x-c|} \cdot |I_j^n|.$$
Therefore,
\begin{eqnarray*}
\sum_{j=1}^{2^n-1} |\phi_{I_j^n}| & \leq & K \cdot \sum_{j=1}^{2^n-1} \sup_{x
\in I_j^n} \;\frac{|\delta(x)|}{|x-c|} \cdot |I_j^n|\\
& = & K \cdot Z_n
\end{eqnarray*}
Let $\Lambda_n = \cup_{j=0}^{2^n-1} I_j^n$. The a priori bounds imply that there
exists $\tau > 0$ such that
$$|\Lambda_n| \leq (1-\tau) \;|\Lambda_{n-1}|.$$
In particular $|\Lambda| =0$ where $\Lambda \cap \; \Lambda_n$ is the 
Cantor attractor. Now we go back to our estimate and notice
that $Z_n$ is a Riemann sum for
$$\int_{\Lambda_n} \frac{|\delta(x)|}{|x-c|} \;dx.$$
Suppose that $\limsup \; Z_n = Z > 0.$ Let $n \geq 1$ and $m > n$. Then we can
find a Riemann sum $\Sigma_{m,n}$ for
$$\int_{\Lambda_n} \frac{|\delta(x)|}{|x-c|}\;dx$$
by adding positive terms to $Z_m$. Then
$$\int_{\Lambda_n} \frac{|\delta(x)|}{|x-c|} \;dx = \limsup_{m \to \infty}
\;\Sigma_{m,n} \geq \limsup_{m \to \infty} \;Z_m \geq Z > 0.$$
Hence,
$$\int_{\Lambda} \frac{|\delta(x)|}{|x-c|}\;dx\; \geq Z >0.$$
This is impossible because $|\Lambda | = 0$.
Thus we proved
$$\sum_{j=1}^{2^n-1} |\phi_{I_j^n}| \longrightarrow 0.$$
\end{proof}
\end{prop}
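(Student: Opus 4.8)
The plan is to obtain the factorization by restricting and rescaling the global decomposition $f=\phi_{\pm}\circ q_c$ of Proposition~\ref{non-lin-bound}, and then to control the total non-linearity $\sum_j|\phi_j^n|$ by identifying the relevant sum as a Riemann sum of an $L^1$ integrand over the shrinking sets $\Lambda_n$.

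First I would fix $n\ge1$ and $1\le j<2^n$. The intervals $I_0^n,\dots,I_{2^n-1}^n$ of the renormalization cycle have pairwise disjoint interiors and $c$ lies in $I_0^n$, so $I_j^n$ is disjoint from $c$; hence $f|_{I_j^n}$ is a diffeomorphism and $I_j^n$ is contained in $[c,1]$ or in $[0,c]$. In either case Proposition~\ref{non-lin-bound} gives $f=\phi_{\pm}\circ q_c$ on $I_j^n$. Rescaling $I_j^n$ and $f(I_j^n)$ affinely onto $[0,1]$, and inserting between the two factors the affine rescaling of $q_c(I_j^n)$ onto $[0,1]$, produces $f_{I_j^n}=\phi_j^n\circ q_j^n$ with $q_j^n=(q_c)_{I_j^n}$ and $\phi_j^n=(\phi_{\pm})_{q_c(I_j^n)}$; the convention that $0$ corresponds to $f^j(c)$ merely fixes which endpoints are matched. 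Since $j\ge1$ the interval $q_c(I_j^n)$ stays away from the critical value of $q_c$, where $\phi_{\pm}$ is $C^2$ because $f$ is, so $\phi_j^n$ is a $C^2$ diffeomorphism of $[0,1]$; this gives the first assertion.

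Next I would estimate $|\phi_j^n|=|\eta_{\phi_j^n}|_0$. Under affine pre- and post-composition the non-linearity transforms by $\eta_{\phi_j^n}(x)=|q_c(I_j^n)|\cdot\eta_{\phi_{\pm}}(\ell_j^n(x))$, where $\ell_j^n:[0,1]\to q_c(I_j^n)$ is the affine chart, so $|\eta_{\phi_j^n}|_0=|q_c(I_j^n)|\cdot\sup_{q_c(I_j^n)}|\eta_{\phi_{\pm}}|$. Substituting the explicit formula~(\ref{non-lin-bd-eqn}) for $\eta_{\phi_{+}}\circ q_c$ (and its analogue for $\phi_{-}$), using the mean value theorem to write $|q_c(I_j^n)|\le|Dq_c(\xi)|\,|I_j^n|$ for some $\xi\in I_j^n$, and invoking the a priori bounds of Proposition~\ref{thm-ap} --- which force the distance from $c$ to $I_j^n$ to be at least of order $|I_j^n|$, hence $|\xi-c|$ and $|x-c|$ mutually comparable for $x,\xi\in I_j^n$ --- I obtain a constant $K$, independent of $n$ and $j$, with
$$|\phi_j^n|\le K\,|I_j^n|\,\sup_{x\in I_j^n}\frac{|\delta(x)|}{|x-c|}.$$
Summing over $j$ yields $\sum_{j=1}^{2^n-1}|\phi_j^n|\le K\,Z_n$, where $Z_n=\sum_{j=1}^{2^n-1}|I_j^n|\sup_{I_j^n}g$ and $g(x)=|\delta(x)|/|x-c|$.

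It then remains to prove $Z_n\to0$. Here I would use that the $C^{2+|\cdot|}$ hypothesis is precisely the assertion that $\hat\beta$ is finite, i.e. $g\in L^1([0,1])$, and that the a priori bounds give $|\Lambda_n|\le(1-\tau)|\Lambda_{n-1}|$, so the Cantor attractor $\Lambda=\bigcap_n\Lambda_n$ (with $\Lambda_n=\bigcup_{j=0}^{2^n-1}I_j^n$) has Lebesgue measure zero. Each $Z_n$ is dominated by an upper Darboux sum for $\int_{\Lambda_n}g$, but for a merely integrable $g$ such sup-sums need not by themselves converge to the integral; this is the main obstacle. I would overcome it by exploiting the nesting $\Lambda_m\subset\Lambda_n$ for $m\ge n$: for fixed $n$, inserting into the partition underlying $Z_m$ the gaps of $\Lambda_m$ that lie inside $\Lambda_n$ produces a genuine upper Darboux sum $\Sigma_{m,n}\ge Z_m$ for $\int_{\Lambda_n}g$, whose mesh tends to $0$ as $m\to\infty$ because $|I_j^m|\to0$, so $\Sigma_{m,n}\to\int_{\Lambda_n}g$. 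Hence $\int_{\Lambda_n}g\ge\limsup_m Z_m$ for every $n$; letting $n\to\infty$ and using $\int_{\Lambda_n}g\to\int_\Lambda g=0$ by dominated convergence forces $\limsup_n Z_n\le0$, i.e. $Z_n\to0$. Therefore $\sum_{j=1}^{2^n-1}|\phi_j^n|\to0$, completing the proof.
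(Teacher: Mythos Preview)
Your proof is correct and follows essentially the same route as the paper's: you build $\phi_j^n$ by rescaling $\phi_{\pm}$ from Proposition~\ref{non-lin-bound}, use the affine scaling rule for non-linearity together with formula~(\ref{non-lin-bd-eqn}) and the a priori bounds to get $|\phi_j^n|\le K\,|I_j^n|\sup_{I_j^n}|\delta|/|x-c|$, and then conclude via the same nested Riemann-sum argument over the shrinking sets $\Lambda_n$. Your write-up is in fact slightly more explicit than the paper's at two points --- you justify why $\phi_j^n$ is $C^2$ (continuity of $\eta_{\phi_{\pm}}$ away from the critical value), and you invoke $g=|\delta|/|x-c|\in L^1$ together with dominated convergence to pass from $\int_{\Lambda_n}g$ to $\int_{\Lambda}g=0$ --- but the underlying argument is the same.
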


\section{Approximation of $R^nf$ by a polynomial map}

The following Lemma is a variation on Sandwich Lemma from \cite{M}.

\begin{lem}(Sandwich) \label{sandwich}
For every $K > 0$ there exists constant $B > 0$ such that the following holds.
Let $\psi_1, \psi_2$ be the compositions of finitely many $\phi, \phi_j \in \text{Diff}_+^2 \;([0, 1]), 1
\leq j \leq n$;
$$\psi_1 = \phi_n \circ \dots \circ \phi_t \circ \dots \phi_1$$
and
$$ \psi_2 = \phi_n \circ \dots \circ \phi_{t+1} \circ \phi \circ \phi_t \circ \dots \phi_1.$$
If $$\displaystyle{\sum_{j}} |\phi_j | + |\phi| \leq K$$
then
$$|\psi_1-\psi_2|_{1} \leq B \;|\phi|.$$
\end{lem}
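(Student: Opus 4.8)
The plan is to exploit the group-like structure of composition together with the additivity of non-linearity under composition (the chain rule for non-linearities, as in \cite{M}). Write $\Phi = \phi_n \circ \dots \circ \phi_{t+1}$ and $\Psi = \phi_t \circ \dots \circ \phi_1$, so that $\psi_1 = \Phi \circ \Psi$ and $\psi_2 = \Phi \circ \phi \circ \Psi$. Thus $\psi_1$ and $\psi_2$ differ only by the insertion of the single diffeomorphism $\phi$ after the block $\Psi$. The first step is to record that the hypothesis $\sum_j |\phi_j| + |\phi| \leq K$ gives, via the chain rule $\eta_{g \circ h}(x) = \eta_h(x) + Dh(x)\,\eta_g(h(x))$ and the fact that each factor is a diffeomorphism of $[0,1]$ onto itself, a uniform bound (depending only on $K$) on $|\eta_{\Phi}|_0$, $|\eta_{\Psi}|_0$, $|\eta_{\phi}|_0$, and hence on the $C^1$ norms $|D\Phi|_0$, $|D\Psi|_0$, $|D\phi|_0$ and on $|1/D\Phi|_0$ etc. This is the ``a priori control'' that lets all constants below depend only on $K$.

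Next I would estimate $|\psi_1 - \psi_2|_0$ and $|D\psi_1 - D\psi_2|_0$ separately. For the $C^0$ estimate: $\psi_2 - \psi_1 = \Phi(\phi(\Psi(x))) - \Phi(\Psi(x))$; since $\phi$ fixes $0$ and $1$ and $|\phi(u) - u| \leq \int_0^1 |D\phi(s) - 1|\,\text{(something)}\dots$ more precisely $|\phi - \mathrm{id}|_0$ is controlled by $|\eta_\phi|_0 = |\phi|$ (because $D\phi$ has integral $1$ over $[0,1]$ and $\log D\phi$ has a zero, so $|D\phi - 1|_0 \lesssim |\phi|$ up to a constant depending on $K$, whence $|\phi - \mathrm{id}|_0 \lesssim |\phi|$). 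Then apply the mean value theorem to $\Phi$ using the uniform bound on $|D\Phi|_0$ to get $|\psi_1 - \psi_2|_0 \leq |D\Phi|_0 \cdot |\phi - \mathrm{id}|_0 \leq B_0\,|\phi|$. For the $C^1$ estimate I would differentiate:
$$
D\psi_1(x) = D\Phi(\Psi(x))\,D\Psi(x), \qquad D\psi_2(x) = D\Phi(\phi(\Psi(x)))\,D\phi(\Psi(x))\,D\Psi(x),
$$
so
$$
D\psi_2(x) - D\psi_1(x) = D\Psi(x)\Bigl[ D\Phi(\phi(\Psi(x)))\,D\phi(\Psi(x)) - D\Phi(\Psi(x)) \Bigr].
$$
Writing the bracket as $D\Phi(\phi(u))\,D\phi(u) - D\Phi(u)$ with $u = \Psi(x)$, I add and subtract $D\Phi(\phi(u))$: the bracket equals $D\Phi(\phi(u))\,(D\phi(u) - 1) + (D\Phi(\phi(u)) - D\Phi(u))$. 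The first term is bounded by $|D\Phi|_0 \cdot |D\phi - 1|_0 \lesssim |\phi|$; the second, using the mean value theorem and $|D^2\Phi|_0 = |D\Phi \cdot \eta_\Phi|_0 \leq |D\Phi|_0 |\eta_\Phi|_0$ together with $|\phi(u) - u| \lesssim |\phi|$, is also $\lesssim |\phi|$. Multiplying by the uniform bound on $|D\Psi|_0$ gives $|D\psi_1 - D\psi_2|_0 \leq B_1\,|\phi|$. Setting $B = B_0 + B_1$ finishes the proof.

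The main obstacle I anticipate is purely bookkeeping: making sure every constant genuinely depends only on $K$ and not on $n$ or on the particular factorization. The key point that makes this work is that the $C^0$ bound on the non-linearities is \emph{additive} over compositions, so a bound on $\sum_j |\phi_j|$ translates into a bound on $|\eta_\Phi|_0$ and $|\eta_\Psi|_0$ uniformly in $n$; from there the $C^1$ and $C^2$ norms of $\Phi$ and $\Psi$ are controlled uniformly, and the single-insertion structure $\psi_2 = \Phi \circ \phi \circ \Psi$ localizes the whole discrepancy to the size of $\phi$, measured by $|\phi| = |\eta_\phi|_0$. The only mildly delicate elementary lemma needed is the inequality $|D\phi - 1|_0 \leq c(K)\,|\eta_\phi|_0$ for a diffeomorphism of $[0,1]$ onto itself, which follows since $\log D\phi$ has a zero in $[0,1]$ (as $\int_0^1 D\phi = 1$) and its derivative is $\eta_\phi$, so $|\log D\phi|_0 \leq |\eta_\phi|_0$ and then exponentiate using the a priori bound on $|D\phi|_0$.
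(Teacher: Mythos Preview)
Your proof is correct, and it takes a somewhat different route from the paper's own argument.

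The paper works multiplicatively and factor-by-factor: it writes
\[
\frac{D\psi_2(x)}{D\psi_1(x)} = D\phi(x_{t+1}) \cdot \prod_{j>t} \frac{D\phi_j(x_j')}{D\phi_j(x_j)},
\]
where $x_j$ and $x_j'$ are the successive images of $x$ along the two compositions. Each ratio in the product is estimated as $1 + O(|\phi_j|)\cdot|x_j'-x_j|$ via the mean value theorem applied to $\log D\phi_j$, and the displacement $|x_j'-x_j|$ is shown to be $O(|\phi|)$ uniformly in $j$ (this is Lemma~10.2 of \cite{M}, quoted in the paper). Multiplying out gives $D\psi_2/D\psi_1 \in [e^{-K_2|\phi|}, e^{K_2|\phi|}]$, and then one converts the ratio bound into a difference bound. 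The $C^0$ estimate is not written out separately; it is implicit.

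Your approach is additive and block-structured: you absorb the tail factors into a single diffeomorphism $\Phi$ and the initial factors into $\Psi$, obtain a uniform $C^2$ bound on $\Phi$ (and $C^1$ on $\Psi$) depending only on $K$, and then the whole lemma reduces to a three-term mean value computation. This is cleaner and more conceptual for a single insertion. The one point where you should tighten the wording is the claim that ``the $C^0$ bound on the non-linearities is additive over compositions'': what is genuinely sub-additive is $|\log D(\cdot)|_0$ (equivalently the $L^1$ norm of the nonlinearity after a change of variables), and from $|\log D(\phi_{j-1}\circ\dots\circ\phi_{t+1})|_0 \le \sum_i |\phi_i| \le K$ you deduce $|D(\cdot)|_0 \le e^K$, whence $|\eta_\Phi|_0 \le e^K \sum_j |\phi_j| \le e^K K$. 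You have all the ingredients for this; just state it with that extra $e^K$ factor rather than as literal additivity. With that adjustment your argument is complete, and the constants depend only on $K$ as required.
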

\begin{proof}
Let $x \in [0, 1]$. For $1 \leq  j \leq n$
let
$$x_j = \phi_{j-1} \circ \dots \circ \phi_2 \circ \phi_1(x) $$
and
\begin{eqnarray*}
D_j = \left(\phi_{j-1} \circ \dots \circ \phi_2 \circ \phi_1 \right)^{'}(x).
\end{eqnarray*}
Furthermore, for $t+1 \leq j \leq n$, let
$$x_j^{\prime} = \phi_{j-1} \circ \dots \circ \phi_{t+1} (\phi(x_{t+1}))$$
and
$$D_j^{\prime} = \left( \phi_{j-1} \circ \dots \circ \phi_{t+1} \right)^{'} (x_{t+1}^{\prime}) \;
\phi^{'} (x_{t+1}) \;D_{t+1}.$$
Now we estimate the difference of the derivatives of $\psi_1, \psi_2$. Namely,
\begin{eqnarray*}
\frac{D\psi_2(x)}{D\psi_1(x)} & = & D\phi(x_{t+1}) \cdot \prod_{j \geq t+1}
\frac{D\phi_j(x_j^{\prime})} {D\phi_j(x_j)}.
\end{eqnarray*}
In the following estimates we will repeatedly apply Lemma $10.3$ from \cite{M} which says,
$$e^{-|\psi|} \leq |D\psi|_{0} \leq e^{|\psi|}.$$
This allows us to get an estimate on $|D\psi_1 - D\psi_2|_{0}$ in terms of
$\displaystyle{\frac{D\psi_2}{D\psi_1}}$.
Now
$$D\phi_j(x_j^{\prime}) = D\phi_j(x_j) +D^2\phi_j(\zeta_j) \;(x_j^{\prime}-x_j).$$
Therefore,
\begin{eqnarray*}
\frac{D\phi_j(x_j^{\prime})}{D\phi_j(x_j)} & \leq &
1+\frac{|D^2\phi_j|_{0}}{D\phi_j(x_j)} \cdot |x_j^{\prime}-x_j| \\
& = & 1+ O (\phi_j) \cdot |x_j^{\prime}-x_j|
\end{eqnarray*}
To continue, we have to estimate $|x_j^{\prime}-x_j|$.
Apply Lemma $10.2$ from \cite{M} to get
\begin{eqnarray*}
|x_j^{\prime}-x_j| & = & O \left(|x_{t+1}^{\prime}-x_{t+1}|\right)\\
& = & O (|\phi|).
\end{eqnarray*}
Because $\sum |\phi_j| + |\phi| \leq K$ there exists $K_1 > 0$ such that
\begin{eqnarray*}
\frac{D\psi_2(x)} {D\psi_1(x)} & \leq & e^{|\phi|}\; \prod_{j \geq t+1} \left( 1 +
O (|\phi_j|\;|\phi|) \right) \\
& \leq & e^{|\phi|} \;e^{K_1 \cdot \sum |\phi_j| \;|\phi|}
\end{eqnarray*}
Hence,
$$\frac{D\psi_2}{D\psi_1} \leq e^{|\phi| (1+K_1 \cdot K)}.$$
We get a lower bound in similar way.
So there exists $K_2 > 0$ such that
$$e^{-K_2 \cdot |\phi|} \leq \frac{|D\psi_2|}{|D\psi_1|} \leq
e^{K_2 \cdot |\phi|}.$$
Finally, there exists $B > 0$ such that
$$|D\psi_2(x) - D\psi_1(x)| \leq B \;|\phi|.$$
\end{proof}

Let $f$ be an infinitely renormalizable $C^{2+|\cdot|}$ unimodal map.
\begin{lem}  \label{q-bound}
There exists $K > 0$ such that for all $n \geq 1$ the following holds
$$\sum_{1 \;\leq \;j \;\leq 2^n-1} |q_j^n| \leq K.$$
\end{lem}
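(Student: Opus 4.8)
The plan is to convert the estimate into a statement about the geometry of the renormalization intervals and then feed in the a priori bounds (Proposition \ref{thm-ap}) twice: once to keep every $I_j^n$ with $j\ge 1$ at a definite distance from $c$, and once to control how much total length the level-$n$ intervals can occupy inside a fixed piece of the dynamical partition. The first step is to record an explicit formula for $|q_j^n|$. Since $q_c(x)=1-\left(\frac{x-c}{1-c}\right)^2$, its non-linearity is $\eta_{q_c}(x)=D\log Dq_c(x)=\frac{1}{x-c}$, and for $1\le j\le 2^n-1$ the interval $I_j^n$ is disjoint from $I_0^n\ni c$, so $q_c$ is a diffeomorphism on $I_j^n$. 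The behaviour of non-linearity under affine rescaling (the chain rule of \cite{M}, already used in the proof of Proposition \ref{approx-qudratic-prop}) then gives
$$|q_j^n|=|\eta_{q_j^n}|_0=|I_j^n|\cdot\sup_{y\in I_j^n}|\eta_{q_c}(y)|=\frac{|I_j^n|}{dist(c,I_j^n)},$$
so the task reduces to proving $\sum_{j=1}^{2^n-1}\frac{|I_j^n|}{dist(c,I_j^n)}\le K$ uniformly in $n$.

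The conceptual point is the choice of decomposition. The crude bound $dist(c,I_j^n)\gtrsim|I_j^n|$ from the a priori bounds gives only $|q_j^n|\lesssim 1$, and summing $2^n-1$ such terms is useless; even grouping the $I_j^n$ by their dyadic distance from $c$ produces $\sim n$ groups each contributing a constant. Instead I would group by the $2$-adic valuation of $j$: for $1\le j\le 2^n-1$ let $k=k(j)\in\{0,\dots,n-1\}$ be the unique integer with $2^k\mid j$ and $2^{k+1}\nmid j$, so that $j\equiv 2^k\pmod{2^{k+1}}$. The standard nesting $I_j^n\subset I_{\,j\bmod 2^{k+1}}^{k+1}$ — which follows by writing $j=q\,2^{k+1}+r$ and iterating $f^{2^{k+1}}(I_0^{k+1})\subset I_0^{k+1}$ — then shows $I_j^n\subset I_{2^k}^{k+1}$, the sibling of $I_0^{k+1}$ inside $I_0^k$. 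Conversely, for fixed $k$ these indices $j$ (there are $2^{\,n-1-k}$ of them, and together over $k=0,\dots,n-1$ they exhaust $\{1,\dots,2^n-1\}$) are exactly the level-$n$ intervals contained in $I_{2^k}^{k+1}$; since they are pairwise disjoint, $\sum_{k(j)=k}|I_j^n|=|\Lambda_n\cap I_{2^k}^{k+1}|$, where $\Lambda_n=\bigcup_{i=0}^{2^n-1}I_i^n$.

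Now I would bring in the two a priori bound inputs with the constant $\tau>0$ of Proposition \ref{thm-ap}. First, $c\in I_0^{k+1}$ while $I_{2^k}^{k+1}\subset I_0^k$ is separated from $I_0^{k+1}$ by the gap $I_0^k\setminus(I_0^{k+1}\cup I_{2^k}^{k+1})$, whose relative length is $\ge\tau$ by $(\ref{gaps})$; hence $dist(c,I_j^n)\ge dist(c,I_{2^k}^{k+1})\ge\tau|I_0^k|$ for every $I_j^n\subset I_{2^k}^{k+1}$. Second, applying $(\ref{gaps})$ at every level $m$ with $k+1\le m<n$, the length occupied inside $I_{2^k}^{k+1}$ contracts by a factor $\le 1-\tau$ at each renormalization step, so $|\Lambda_n\cap I_{2^k}^{k+1}|\le(1-\tau)^{\,n-k-1}|I_{2^k}^{k+1}|\le(1-\tau)^{\,n-k-1}|I_0^k|$. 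Combining,
$$\sum_{k(j)=k}|q_j^n|\le\frac{1}{\tau|I_0^k|}\sum_{k(j)=k}|I_j^n|=\frac{|\Lambda_n\cap I_{2^k}^{k+1}|}{\tau|I_0^k|}\le\frac{(1-\tau)^{\,n-k-1}}{\tau},$$
and summing over $k=0,\dots,n-1$ yields $\sum_{j=1}^{2^n-1}|q_j^n|\le\frac1\tau\sum_{m\ge 0}(1-\tau)^m=\frac{1}{\tau^2}=:K$, independent of $n$.

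The main obstacle is precisely the decomposition in the second paragraph: one has to recognize that the genuinely relevant distance scales from $c$ number $\sim n$, and that what saves the bound is the \emph{geometric} decay of $|\Lambda_n\cap I_{2^k}^{k+1}|$ in the depth $n-k$, so that the valuation-of-$j$ grouping is the one aligning the two. Once that is seen, both quantitative ingredients are immediate corollaries of the a priori bounds already established, so the rest is routine.
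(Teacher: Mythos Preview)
Your proof is correct, and it proceeds by a different organizing idea than the paper's. The paper sets $Q_n=\sum_{j=1}^{2^n-1}|q_j^n|$ and derives a one--step recursion: for each $1\le j\le 2^n-1$ the two children $I_j^{n+1},I_{j+2^n}^{n+1}\subset I_j^n$ satisfy $|q_j^{n+1}|+|q_{j+2^n}^{n+1}|\le (1-\tau)\,|q_j^n|$ (their total length drops by the gap fraction while the distance to $c$ can only grow), so $Q_{n+1}\le(1-\tau)Q_n+|q_{2^n}^{n+1}|$ with the single ``new'' term $|q_{2^n}^{n+1}|$ uniformly bounded by the a priori bounds. Iterating this linear recursion gives $Q_n$ bounded.

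Your argument is the \emph{unrolled} version of that recursion: grouping $j$'s by their $2$-adic valuation $k$ singles out, for each $k$, precisely the descendants at level $n$ of the interval $I_{2^k}^{k+1}$ that was the ``new'' term when the recursion moved from $Q_k$ to $Q_{k+1}$; the bound $\sum_{k(j)=k}|q_j^n|\le \tau^{-1}(1-\tau)^{\,n-k-1}$ is exactly the contribution of that term after $n-k-1$ further damping steps. So the two proofs encode the same mechanism. What you gain is an explicit constant $K=1/\tau^2$ and a transparent picture of where the geometric decay comes from; the paper's version is shorter, requires no combinatorial bookkeeping about the nesting $I_j^n\subset I_{2^k}^{k+1}$, and makes the inductive structure reusable (the same recursion pattern recurs elsewhere in renormalization arguments).
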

\begin{proof}
The non-linearity norm of $q_j^n$, $j = 1, \dots,2^{n}-1$, is
$$|q_j^n| = \frac{|I_j^n|}{dist\;(I_j^n, c)}.$$
Let
$$Q_n = \sum_{j=1}^{2^n-1}|q_j^n|.$$
Observe that there exists $\tau > 0$ such that for $j=1,2, \dots,2^n-1$
\begin{eqnarray*}
|q_j^{n+1}|+|q_{j+2^n}^{n+1}| & \leq &
\frac{|I_j^{n+1}|+|I_{j+2^n}^{n+1}|}{dist\;(I_j^n,c)}\\
& = & |q_j^n| \; \frac{|I_j^{n+1}|+|I_{j+2^n}^{n+1}|}{|I_j^n|} \\
& = & |q_j^n| \; \frac{|I_j^n-G_j^n|}{|I_j^n|} \leq |q_j^n| (1-\tau).
\end{eqnarray*}
Therefore
$$Q_{n+1} \leq (1-\tau) \;Q_n +|q_{2^n}^{n+1}|.$$
From the a priori bounds we get a constant $K_1 > 0$ such that
$$|q_{2^n}^{n+1}| \leq \frac{|I_{2^n}^{n+1}|}{|G_{2^n}^n|} \leq K_1.$$
Thus
$$Q_{n+1} \leq (1-\tau) Q_n + K_1.$$
This implies the Lemma.
\end{proof}

Consider the map $f: I_0^n \rightarrow I_1^n$, and rescaled affinely range and domain to obtain the 
unimodal map
$$\hat{f_n}: [0, 1] \rightarrow [0, 1].$$
Apply Proposition $\ref{non-lin-bound}$ to obtain the following representation of $\hat{f_n}.$ There exists $c_n \in (0,
1)$ and diffeomorphisms $\phi_{\pm}^n : [0, 1] \rightarrow [0, 1]$ such that
$$\hat{f_n}(x) = \phi_{+}^n \circ q_{c_n}(x), \qquad x\in [c_n, 1]$$
and
$$\hat{f_n}(x) = \phi_{-}^n \circ q_{c_n}(x), \qquad x\in [0, c_n].$$
Furthermore $$|\phi_{\pm}^n| \to 0$$
when $n \to \infty.$
Let $q_0^n = q_{c_n}$. Use Proposition $\ref{approx-qudratic-prop}$ to obtain the following representation for the
$n^{th}$ renormalization of $f$.
$$R^nf = (\phi_{2^n-1}^n \circ q_{2^n-1}^n ) \circ \dots \circ  (\phi_j^n \circ q_j^n ) \circ \dots
\circ ( \phi_1^n \circ q_1^n)  \circ  \phi_{\pm}^n \circ q_0^n.$$
Inspired by \cite{AMM} we introduce the unimodal map
$$f_n = q_{2^n-1}^n \circ \dots \circ q_j^n \circ \dots \circ q_1^n \circ q_0^n.$$

\begin{prop} \label{rnf_close_fn}
If $f$ is an infinitely renormalizable $C^{2+|\cdot|}$ map then
$$\lim_{n \to \infty} |R^nf-f_n|_{1} = 0.$$
\end{prop}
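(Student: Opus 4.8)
The plan is to pass from $R^nf$ to $f_n$ by \emph{deleting}, one factor at a time, the diffeomorphisms $\phi_{\pm}^n$ and $\phi_j^n$ $(1\le j\le 2^n-1)$ occurring in the representation $R^nf=(\phi_{2^n-1}^n\circ q_{2^n-1}^n)\circ\cdots\circ(\phi_1^n\circ q_1^n)\circ\phi_{\pm}^n\circ q_0^n$, and to control the $C^1$-cost of each single deletion by the Sandwich Lemma \ref{sandwich}. Summing the costs will yield
$$|R^nf-f_n|_1\le C\cdot B\Bigl(\,\sum_{j=1}^{2^n-1}|\phi_j^n|+|\phi_{\pm}^n|\Bigr),$$
and the right-hand side tends to $0$ by Proposition \ref{approx-qudratic-prop} together with $|\phi_{\pm}^n|\to 0$; this proves the proposition.

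First I would fix the ambient constants. By Lemma \ref{q-bound}, $\sum_{1\le j\le 2^n-1}|q_j^n|\le K$ uniformly in $n$, while Proposition \ref{approx-qudratic-prop} and $|\phi_{\pm}^n|\to 0$ make $\sum_j|\phi_j^n|$ and $|\phi_{\pm}^n|$ bounded as well; hence the sum of the non-linearity norms of \emph{all} factors appearing in either representation is $\le K'$ for some $K'$ independent of $n$. Let $B=B(K')$ be the constant produced by the Sandwich Lemma. Since $q_0^n=q_{c_n}$ is the common innermost (unimodal) factor, on each of the two monotone branches of $q_0^n$ we may write $R^nf=\Psi_1\circ q_0^n$ and $f_n=\Psi_2\circ q_0^n$ with $\Psi_1,\Psi_2$ compositions of $C^2$ diffeomorphisms of $[0,1]$ (with $\phi_{\pm}^n$ the appropriate rescaled branch), and $\Psi_2$ obtained from $\Psi_1$ by deleting the $2^n$ factors $\phi_{\pm}^n,\phi_1^n,\dots,\phi_{2^n-1}^n$. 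The a priori bounds of Proposition \ref{thm-ap} keep $c_n$ in a compact subinterval of $(0,1)$, so $|Dq_0^n|_0$ is uniformly bounded and the chain rule gives $|R^nf-f_n|_1\le C\,|\Psi_1-\Psi_2|_1$ for a uniform $C$; it remains to estimate $|\Psi_1-\Psi_2|_1$.

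Now the telescope. Enumerate the factors to be removed as $\rho_1,\dots,\rho_{2^n}$, set $g_0=\Psi_1$, and let $g_k$ be $g_{k-1}$ with $\rho_k$ deleted, so that $g_{2^n}=\Psi_2$. For each $k$, the pair $(g_k,g_{k-1})$ is exactly of the form $(\psi_1,\psi_2)$ treated by the Sandwich Lemma, with inserted factor $\phi=\rho_k$, and the total non-linearity of the factors involved is at most $K'$; therefore $|g_{k-1}-g_k|_1\le B|\rho_k|$. Summing,
$$|\Psi_1-\Psi_2|_1\le\sum_{k=1}^{2^n}|g_{k-1}-g_k|_1\le B\Bigl(\,\sum_{j=1}^{2^n-1}|\phi_j^n|+|\phi_{\pm}^n|\Bigr)\longrightarrow 0,$$
which, combined with $|R^nf-f_n|_1\le C\,|\Psi_1-\Psi_2|_1$, finishes the proof.

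The step I expect to be the real obstacle is the uniformity of $B$ across all $2^n$ deletions: one must be sure that, no matter which sub-collection of $\phi$-factors has already been removed, the remaining composition together with the factor about to be removed still has total non-linearity below the fixed threshold $K'$. This is precisely what Lemma \ref{q-bound} secures --- and it in turn rests on the a priori bounds; without such a uniform bound the Sandwich constant would deteriorate with $n$ and the telescoped estimate would be vacuous. The remaining points (fitting the piecewise-defined $\phi_{\pm}^n$ and the non-invertible innermost $q_0^n$ into the framework) are routine once one restricts to the two monotone branches of $q_0^n$ and rescales, since this alters the relevant non-linearities only by uniformly bounded factors.
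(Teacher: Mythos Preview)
Your proposal is correct and follows essentially the same route as the paper: factor out the innermost quadratic $q_0^n$, then telescope from $R^nf$ to $f_n$ by deleting the diffeomorphic factors $\phi_{\pm}^n,\phi_1^n,\dots,\phi_{2^n-1}^n$ one at a time, each deletion controlled by the Sandwich Lemma with a uniform constant supplied by Lemma~\ref{q-bound}. The paper's intermediate maps $\psi_j^{\pm}$ are exactly your $g_k$ (with a specific deletion order), and you are in fact slightly more explicit than the paper about why $|Dq_0^n|_0$ stays bounded and why the Sandwich constant $B$ does not deteriorate along the telescope.
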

\begin{proof}
Define the diffeomorphisms
$$\psi_{j}^{\pm} =  q_{2^n-1}^n \circ \dots \circ q_j^n \circ (\phi_{j-1}^n \circ q_{j-1}^n) \circ \dots \circ (\phi_1^n
\circ q_1^n) \circ \phi_{\pm}^n$$
with $j = 0,1,2,\dots 2^n.$ Notice that
$$R^nf(x) = \psi_{2^n}^{\pm} \circ q_0^n(x)$$
and that $$f_n(x) = \psi_0^{\pm} \circ  q_0^n(x).$$
where we use again the $\pm$ distinction for points $x \in [0, c_n]$ and $x \in [c_n, 1]$. Apply the Sandwich Lemma
$\ref{sandwich}$ to get a constant $B > 0$ such that
$$ |\psi_{j+1}^{\pm}-\psi_j^{\pm}|_{1} \leq B \cdot |\phi_{j}^n|$$
for $j \geq 1$, and also notice that 
$$|\psi_1^{\pm}-\psi_0^{\pm}|_{1} \leq B \cdot |\phi_{\pm}^n| \longrightarrow 0.$$
We can now apply Proposition $\ref{approx-qudratic-prop}$ to get
$$\lim_{n \to \infty} |\psi_{2^n}^{\pm}-\psi_{0}^{\pm}|_{1} \leq \lim_{n \to \infty} \; B \cdot \sum_{1 \;\leq \;j\;\leq
2^n-1}
|\phi_j^n|+|\phi_{\pm}^n| =0,$$
which implies that: 
$$ \lim_{n \to \infty} |R^nf-f_n|_{1} =0.$$
\end{proof}

\section{Convergence}
Fix an infinitely renormalizable $C^{2+|\cdot|}$ map $f$.
\begin{lem}
For every $N_0 \geq 1$, there exists $n_1 \geq 1$ such that
$f_n$  is $N_0$ times renormalizable whenever $n \geq n_1.$
\end{lem}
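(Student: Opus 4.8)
The plan is to prove that ``$N_0$ times renormalizable'' is a finite conjunction of open conditions which the renormalizations $R^nf$ satisfy with a margin bounded below \emph{uniformly in $n$}, and then to transfer this to $f_n$ using the $C^1$--closeness $|R^nf-f_n|_1\to 0$ from Proposition~\ref{rnf_close_fn}.

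First I would record a uniform $C^1$ bound on the family $\{R^nf\}_{n\ge1}$: writing $R^nf=h_n^{-1}\circ f^{2^n}\circ h_n$ with $h_n:[0,1]\to I_0^n$ the affine chart, the chain rule gives $|D(R^nf)(x)|=|Df^{2^n}(h_n(x))|$ for $x\in[0,1]$, so $|D(R^nf)|_0\le K$ for every $n$ by Corollary~\ref{derivative-bound}. Next, $f$ has a priori bounds with a single constant $\tau>0$ by Proposition~\ref{thm-ap}. Since the cycle of intervals of $R^nf$ up to level $N_0$ is, read in the chart $h_n$, exactly the portion of the cycle of $f$ at levels $n+1,\dots,n+N_0$ (for instance $I_0^1(R^nf)=h_n^{-1}(I_0^{n+1})$ and $I_1^1(R^nf)=R^nf(I_0^1(R^nf))=h_n^{-1}(I_{2^n}^{n+1})$), and since affine maps preserve ratios of lengths and gaps, it follows that $R^nf$ is $N_0$ times renormalizable and that every interval and every gap appearing in the inequalities defining this have length $\ge c_*$, for a constant $c_*=c_*(\tau,N_0)>0$ that does not depend on $n$.

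Now I would pass from $R^nf$ to $f_n$. Both maps have the same critical point $c_n$, namely that of the innermost quadratic factor $q_0^n=q_{c_n}$ (all the other factors $q_j^n,\phi_j^n,\phi_\pm^n$ being diffeomorphisms, hence contributing a nonvanishing derivative). From $|f_n-R^nf|_1\to0$ we get $\mathrm{Lip}(f_n)\le K+1$ for $n$ large, and then a telescoping estimate gives, for all $j\le 2^{N_0+1}$,
$$\bigl|\,f_n^{\,j}(c_n)-(R^nf)^{\,j}(c_n)\,\bigr|\ \le\ j\,(K+1)^{\,j-1}\,|f_n-R^nf|_0\ \xrightarrow[\,n\to\infty\,]{}\ 0 ,$$
with a constant uniform in $n$. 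The assertion ``$f_n$ is $N_0$ times renormalizable'' unfolds into a fixed finite list of strict inequalities — the critical point lies inside the relevant central intervals, and the relevant pairs of intervals are disjoint, for each of $f_n,Rf_n,\dots,R^{N_0-1}f_n$ — and these are inequalities among the finitely many points $\{f_n^{\,j}(c_n):j\le 2^{N_0+1}\}$ depending continuously on those points. For $R^nf$ each of them holds with margin $\ge c_*(\tau,N_0)$ uniformly in $n$, so by the displayed convergence there is $n_1$ such that for $n\ge n_1$ all of them hold for $f_n$ as well; that is, $f_n$ is $N_0$ times renormalizable.

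The main obstacle is the bookkeeping concentrated in the second and third paragraphs: one has to verify carefully that the single a priori bounds constant $\tau$ of $f$ yields one quantitative margin $c_*(\tau,N_0)$ valid for every $R^nf$, and that, for unimodal maps with a quadratic tip, ``being $N_0$ times renormalizable'' is genuinely a continuous finite system of strict inequalities in finitely many iterates of the critical point (so that it is stable under the $C^0$ perturbation $R^nf\rightsquigarrow f_n$). Once this standard but slightly tedious point is settled, the telescoping estimate above completes the proof.
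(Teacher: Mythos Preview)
Your proof is correct and follows essentially the same route as the paper: use the a priori bounds to get a uniform lower bound on the separation of the first $2^{N_0+1}$ critical iterates of $R^nf$, then transfer this to $f_n$ via the $C^1$--closeness of Proposition~\ref{rnf_close_fn} (you make the telescoping bound explicit, whereas the paper simply asserts that the critical iterates of $f_n$ stay $d/2$--separated for $n$ large). The only difference in packaging is that the paper phrases the final step as ``the kneading sequences of $f_n$ and $R^nf$ agree for at least $2^{N_0+1}$ symbols'', while you phrase it as ``$N_0$--times renormalizable is a finite system of strict inequalities in the critical iterates''; these are the same observation.
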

\begin{proof}
The a priori bounds from Proposition $\ref{thm-ap}$ gives $d > 0$ such that for $n \geq 1$
$$|(R^nf)^i(c) -(R^nf)^j(c)| \geq d$$
for all $i,j \leq 2^{N_0+1}$ and $i \neq j.$
Now by taking $n$ large enough and using Proposition $\ref{rnf_close_fn}$ we find
$$|f_n^i(c) - f_n^j(c)| \geq \frac{1}{2} d$$
for $i \neq j$ and $i,j \leq 2^{N_0+1}.$
The {\it kneading sequence} of $f_n$ (i.e., the sequence of signs of the derivatives of that 
function) coincides with the kneading sequence of $R^nf$ for
at least $2^{N_0+1}$ positions. We proved
that $f_n$ is $N_0$ times renormalizable because $R^nf$ is $N_0$ times renormalizable.
\end{proof}

The polynomial unimodal maps $f_n$ are in a compact family of quadratic like maps.
This follows from Lemma $\ref{q-bound}.$
The unimodal renormalization theory presented in \cite{Ly} gives us the following.

\begin{prop} \label{ml}
There exists $N_0 \geq 1$ and $n_0 \geq 1$ such that $f_n$ is $N_0$ renormalizable and
$$dist_{1} \;(R^{N_0}f_n, \; W^u) \leq \frac{1}{3} \cdot dist_1 \;(f_n, \;W^u).$$
\end{prop}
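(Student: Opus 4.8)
The plan is to deduce Proposition~\ref{ml} from the hyperbolicity of renormalization at $f_*^\omega$ established in \cite{Ly}, fed with the two facts just obtained: all the polynomial maps $f_n$ lie in one compact family of quadratic-like germs (Lemma~\ref{q-bound}), and $f_n$ is $N$ times renormalizable once $n$ is large, for every $N$ (the preceding Lemma). I would work inside a compact family $\mathcal{Q}$ of quadratic-like germs with uniform \emph{a priori} bounds, forward invariant under $R$ on its renormalizable members and containing every $f_n$; such a $\mathcal{Q}$ exists by Lemma~\ref{q-bound} together with the complex \emph{a priori} bounds (cf. \cite{McM}). Inside $\mathcal{Q}$, \cite{Ly} supplies the hyperbolic fixed point $f_*^\omega$, whose stable set $W^s$ is exactly the closed, codimension-one set of infinitely renormalizable germs in $\mathcal{Q}$, whose one-dimensional unstable manifold $W^u$ is the family of analytic maps of Theorem~\ref{davie}, and which attracts every point of $W^s$ under $R$.

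From this input I would extract, by standard hyperbolic-fixed-point theory, a neighbourhood $\mathcal{V}\ni f_*^\omega$ and a rate $0<\lambda<1$ with $dist_1(Rg,W^u)\le\lambda\,dist_1(g,W^u)$ whenever $g,Rg\in\mathcal{V}$ — distance to $W^u$ being a ``stable'' quantity, it is contracted — and, since $R|_{W^s}$ has $f_*^\omega$ as an attracting fixed point, an integer $K_1$ with $R^{K_1+j}(W^s\cap\mathcal{Q})\subset\mathcal{V}$ for all $j\ge0$, by uniform attraction on the compact set $W^s\cap\mathcal{Q}$. The crux is then to upgrade ``$f_n$ is $N$ times renormalizable'' into quantitative control of the $R$-orbit of $f_n$. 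For this I would use that the sets $\mathcal{Q}_N=\{g\in\mathcal{Q}:g\text{ is $N$ times renormalizable}\}$ are compact, nested, with $\bigcap_N\mathcal{Q}_N=W^s\cap\mathcal{Q}$ — precisely the identification of the stable manifold from \cite{Ly}. Fixing $m\ge1$, the $m+1$ open conditions ``$R^{K_1}g,\dots,R^{K_1+m}g\in\mathcal{V}$'' hold throughout $W^s\cap\mathcal{Q}$; since $\bigcap_N\big(\mathcal{Q}_N\setminus\{\text{these conditions}\}\big)=\emptyset$ and the sets are nested compacts, they hold on all of $\mathcal{Q}_N$ once $N$ is large.

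To finish, let $C_0=C_0(K_1)\ge1$ be such that $dist_1(R^{K_1}g,W^u)\le C_0\,dist_1(g,W^u)$ for all $g\in\mathcal{Q}$ that are at least $K_1$ times renormalizable; such a $C_0$ exists because $R^{K_1}$ is locally Lipschitz on the relatively compact set of such maps and sends $W^u$ into itself, and because the point of $W^u$ nearest $g$ is itself at least $K_1$ times renormalizable (being $C^1$-close to $g$, which is, so the \emph{a priori} bounds keep it in the domain of $R^{K_1}$). Now pick $m$ with $C_0\lambda^{m}<\tfrac13$, then $N$ large as above, set $N_0:=K_1+m$, and take $n_0$ to be the index from the preceding Lemma past which $f_n$ is $N$ (hence $N_0$) times renormalizable; then for $n\ge n_0$ we have $f_n\in\mathcal{Q}_N$, the orbit $R^{K_1}f_n,\dots,R^{N_0}f_n$ stays in $\mathcal{V}$, and
$$dist_1(R^{N_0}f_n,W^u)\le\lambda^{m}\,dist_1(R^{K_1}f_n,W^u)\le C_0\lambda^{m}\,dist_1(f_n,W^u)\le\tfrac13\,dist_1(f_n,W^u).$$
I expect the finite-intersection step of the second paragraph to be the main obstacle: it is where the purely qualitative identification $W^s=\{\text{infinitely renormalizable maps}\}$ from \cite{Ly} gets turned into the uniform, quantitative shadowing that the estimate needs; a secondary point, handled throughout by the \emph{a priori} bounds, is the passage between the real-analytic unimodal category and the complex quadratic-like category in which \cite{Ly} operates.
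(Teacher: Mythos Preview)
Your proposal is sound and in fact supplies considerably more than the paper does: the paper offers no proof of Proposition~\ref{ml} whatsoever, stating it directly as a consequence of the renormalization theory in \cite{Ly} after noting (from Lemma~\ref{q-bound}) that the $f_n$ lie in a compact family of quadratic-like maps. Your sketch --- local contraction toward $W^u$ near the hyperbolic fixed point, uniform absorption of $W^s\cap\mathcal{Q}$ into that neighbourhood, and a finite-intersection argument on the nested compacts $\mathcal{Q}_N$ to transfer this to maps that are merely $N$-times renormalizable --- is exactly the standard way one extracts such a uniform contraction estimate from hyperbolicity plus compactness, and it is the intended content behind the paper's bare citation.

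One small point to tighten in the $C_0$ step: your justification that ``the point of $W^u$ nearest $g$ is itself at least $K_1$ times renormalizable, being $C^1$-close to $g$'' presupposes that $dist_1(g,W^u)$ is already small, which is not given for an arbitrary $g\in\mathcal{Q}_{K_1}$. The clean fix is a dichotomy: if $dist_1(g,W^u)\ge\delta$ for some fixed $\delta>0$, the bound is automatic since $R^{K_1}(\mathcal{Q}_{K_1})$ is relatively compact and hence $dist_1(R^{K_1}g,W^u)$ is uniformly bounded; if $dist_1(g,W^u)<\delta$ with $\delta$ chosen small relative to the \emph{a priori} bounds on $\mathcal{Q}_N$, then the nearest point on $W^u$ is indeed close enough to $g$ to be $K_1$-times renormalizable, and your Lipschitz argument applies. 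Alternatively, since you only need the estimate for $g=f_n$ with $n\ge n_0$, you may simply enlarge $n_0$ so that each $f_n$ is already in the linearization neighbourhood $\mathcal{V}$ (using $\mathcal{Q}_N\to W^s\cap\mathcal{Q}$ and $R^{K_1}(W^s\cap\mathcal{Q})\subset\mathcal{V}$), which makes the separate $C_0$ step unnecessary. Either way this is a routine patch, not a genuine obstruction.
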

Here, $W^u$ is the unstable manifold of the renormalization fixed point contained in the space of quadratic like maps.
Recall that $dist_1$ stands for the $C^1$ distance.

\begin{lem}
There exists $K > 0$ such that for $n \geq 1$
$$dist_1 \; (R^nf, \;W^u) \leq K.$$
\end{lem}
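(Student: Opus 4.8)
The plan is to bound $\text{dist}_1(R^nf, W^u)$ uniformly in $n$ by combining the approximation $R^nf \approx f_n$ from Proposition~\ref{rnf_close_fn} with the fact that the maps $f_n$ live in a fixed compact family. First I would note that, by Proposition~\ref{rnf_close_fn}, $|R^nf - f_n|_1 \to 0$, so in particular $\sup_{n\ge 1}|R^nf - f_n|_1 < \infty$; call this bound $C_1$. Hence it suffices to bound $\text{dist}_1(f_n, W^u)$ uniformly, and then use the triangle inequality $\text{dist}_1(R^nf, W^u) \le |R^nf - f_n|_1 + \text{dist}_1(f_n, W^u)$.

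Next I would invoke Lemma~\ref{q-bound}: the nonlinearities of the quadratic factors satisfy $\sum_{1 \le j \le 2^n-1}|q_j^n| \le K$ for a constant $K$ independent of $n$, and together with the a priori bounds (Proposition~\ref{thm-ap}) this confines the critical points $c_n$ and $c_j^n$ to a compact subinterval of $(0,1)$. Consequently the polynomial-like maps $f_n = q_{2^n-1}^n \circ \dots \circ q_1^n \circ q_0^n$ all lie in a single compact family of quadratic-like maps, as already observed in the paragraph preceding Proposition~\ref{ml}. A compact set has finite diameter in the $C^1$ metric, and $W^u$ is a fixed (finite-dimensional) set in this space; therefore $\sup_{n\ge 1}\text{dist}_1(f_n, W^u) =: C_2 < \infty$.

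Combining the two estimates gives $\text{dist}_1(R^nf, W^u) \le C_1 + C_2 =: K$ for all $n \ge 1$, which is the claim. The only mild subtlety is to make sure the ambient function space in which $W^u$ sits and in which $R^nf$, $f_n$ are compared is the same one used in Proposition~\ref{ml}; this is the space of quadratic-like (equivalently, suitably normalized $C^1$ polynomial-like) maps with the $C^1$ metric, and the containment of $\{f_n\}$ in a compact subset there is exactly what Lemma~\ref{q-bound} delivers. I do not expect a serious obstacle here: the statement is essentially a bookkeeping consequence of the compactness already established plus the $C^1$-closeness of $R^nf$ to $f_n$; the one point requiring a line of care is verifying that the $C^1$ distance from a point of a compact family to the fixed set $W^u$ is bounded, which follows since the continuous function $g \mapsto \text{dist}_1(g, W^u)$ attains a maximum on a compact set.
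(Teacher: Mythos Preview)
Your proposal is correct and is precisely the unpacking of the paper's one-line proof, which simply says ``This follows from Lemma~\ref{q-bound} and Proposition~\ref{rnf_close_fn}.'' You have made explicit exactly the two ingredients the paper cites: the compactness of $\{f_n\}$ coming from Lemma~\ref{q-bound} (giving a uniform bound on $\mathrm{dist}_1(f_n,W^u)$) and the $C^1$-closeness of $R^nf$ to $f_n$ from Proposition~\ref{rnf_close_fn}, combined via the triangle inequality.
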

\begin{proof}
This follows from Lemma $\ref{q-bound}$ and Proposition $\ref{rnf_close_fn}$.
\end{proof}

Let $f_{*}^{\omega} \in W^u$ be the analytic renormalization fixed point.

\begin{thm}
If $f$ is an infinitely renormalizable $C^{2+|\cdot|}$ unimodal map. Then
$$\lim_{n \to \infty} dist_0 \left( R^nf, \;f_{*}^{\omega} \right) = 0.$$
\end{thm}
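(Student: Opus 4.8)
The plan is to reduce the convergence statement to the polynomial maps $f_n$ and then feed the one-step contraction toward $W^u$ provided by Proposition \ref{ml} into a recursion.

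\emph{Step 1 (reduction and a recursion).} By Proposition \ref{rnf_close_fn} we have $|R^nf - f_n|_1 \to 0$, so it is enough to control $a_n := dist_1(f_n, W^u)$ together with the position of $f_n$ along $W^u$. By Lemma \ref{q-bound} the maps $f_n$ lie in a fixed compact family of quadratic–like maps; in particular $a_n \le K$ for some $K$, and $N_0$ iterations of renormalization are uniformly $C^1$-continuous on this family together with the $C^1$-bounded maps $R^nf$, which all satisfy the a priori bounds of Proposition \ref{thm-ap} with the same constant (renormalization being an affine rescaling of a first return map, $R^nf$ inherits the a priori bounds constant of $f$). Comparing $R^{N_0}f_n$ with $f_{n+N_0}$: since $f_n$ is $C^1$-close to $R^nf$ and $R^{N_0}(R^nf)=R^{n+N_0}f$, uniform continuity of $R^{N_0}$ gives $|R^{N_0}f_n - R^{n+N_0}f|_1 \to 0$, while $|R^{n+N_0}f - f_{n+N_0}|_1 \to 0$ by Proposition \ref{rnf_close_fn}; hence $\epsilon_n := dist_1(R^{N_0}f_n, f_{n+N_0}) \to 0$. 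Combining this with Proposition \ref{ml},
$$ a_{n+N_0} \;\le\; dist_1(f_{n+N_0},R^{N_0}f_n) + dist_1(R^{N_0}f_n,W^u) \;\le\; \epsilon_n + \tfrac13 a_n. $$

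\emph{Step 2 ($a_n \to 0$).} Fix $\delta>0$ and take $n$ so large that $\epsilon_m<\delta$ for $m\ge n$. Iterating the inequality above along $n, n+N_0, n+2N_0,\dots$ yields $a_{n+jN_0} \le \tfrac32\delta + 3^{-j}K$, so $a_m \le 2\delta$ for all large $m$ in the residue class $n \bmod N_0$; running over the $N_0$ residue classes and letting $\delta\to 0$ gives $a_n\to 0$. With $|R^nf-f_n|_1\to 0$ this gives $dist_1(R^nf,W^u)\to 0$.

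\emph{Step 3 (identifying the limit).} Since $(f_n)$ is precompact in $C^1$ (Lemma \ref{q-bound}) and $|R^nf-f_n|_1\to 0$, the sequence $(R^nf)$ is precompact in $C^1$. Let $g$ be any subsequential limit. Then $g$ is the $C^1$-limit of the corresponding subsequence of the $f_n$, hence a quadratic–like map in the above compact family; moreover $dist_1(g,W^u)=0$, so $g\in W^u$; and since each $R^nf$ is infinitely renormalizable with the uniform a priori bounds of Proposition \ref{thm-ap}, the renormalization cycles $I_j^m$ are non-degenerate in the limit and $g$ is infinitely renormalizable. By the hyperbolicity of $f_*^\omega$ in the space of quadratic–like maps (\cite{Ly}; cf.\ the discussion preceding Proposition \ref{ml}), whose stable manifold $W^s$ is exactly the set of infinitely renormalizable maps, we get $W^u \cap \{\text{infinitely renormalizable}\} = W^u\cap W^s = \{f_*^\omega\}$, so $g=f_*^\omega$. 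Every subsequential limit is $f_*^\omega$, hence $R^nf\to f_*^\omega$ in $C^1$ and a fortiori $dist_0(R^nf,f_*^\omega)\to 0$.

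\emph{Main obstacle.} The delicate point is the uniformity invoked in Step 1: one must know that $N_0$ iterates of renormalization depend $C^1$-continuously on the map, uniformly over the family containing both the analytic maps $f_n$ and the merely $C^{2+|\cdot|}$ maps $R^nf$. This is precisely where the a priori bounds (uniform non-degeneracy of \emph{all} renormalization intervals, Proposition \ref{thm-ap}) and the compactness of the $f_n$ (Lemma \ref{q-bound}) do the work; the recursion of Step 2 and the limiting argument of Step 3 are then soft.
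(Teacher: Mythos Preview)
Your argument follows the same architecture as the paper's: combine the approximation $|R^nf-f_n|_1\to 0$ (Proposition~\ref{rnf_close_fn}) with the contraction of the polynomial $f_n$ toward $W^u$ (Proposition~\ref{ml}) into a linear recursion, then identify the unique infinitely renormalizable point of $W^u$ as $f_*^\omega$. The one real difference is the metric in which you run the recursion. You work with $a_n=dist_1(f_n,W^u)$ and therefore need $R^{N_0}$ to be $C^1$-continuous uniformly on a set containing both the analytic $f_n$ and the merely $C^2$ maps $R^nf$; this is exactly the point you flag as the ``main obstacle''. The paper instead runs the recursion on $dist_0(R^{nN_0}f,W^u)$ and uses only the elementary fact that $dist_0(Rg,Rh)\le A\,dist_0(g,h)$ whenever $|Dg|_0,|Dh|_0\le K$; since $|D(R^nf)|_0$ is uniformly bounded by Corollary~\ref{derivative-bound}, this $C^0$-Lipschitz estimate iterates $N_0$ times with a uniform constant and the obstacle simply does not arise.

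Your route can be completed, but the justification you give is not quite the right one. A priori bounds control the geometry of the cycle of intervals (hence the size of the affine rescalings in $R^{N_0}$), not the modulus of continuity of $D(R^nf)$, which is what governs $C^1$-continuity of $g\mapsto D(g^{2^{N_0}})$. What actually closes the gap is the compactness of $\{f_n\}$ as a family of \emph{analytic} maps: the $Df_n$ are equicontinuous, and since $|D(R^nf)-Df_n|_0\to 0$ one can always estimate $|Df_n(p)-D(R^nf)(q)|\le \omega_{Df_n}(|p-q|)+|Df_n-D(R^nf)|_0$, using the modulus of $Df_n$ rather than that of $D(R^nf)$. With that observation your $\epsilon_n\to 0$ follows; but the paper's $C^0$ approach makes the whole issue a one-line remark.
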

\begin{proof}
For every $K >0$, there exists $A >0$ such that the following holds. Let $f, g$ be renormalizable unimodal maps with
$$|Df|_0, \; |Dg|_0 \leq K$$
then
\begin{eqnarray}
dist_0 (Rf, \;Rg) \;\leq \;A \cdot dist_0 (f, g).  \label{distfg}
\end{eqnarray}
Let $N_0 \geq 1$ be as in Proposition $\ref{ml}$. Now
\begin{eqnarray*}
dist_0 (R^{n+N_0}f, W^u) & \leq  & dist_0 \left( R^{N_0}(R^nf), \; R^{N_0}f_n \right) + dist_0 \left( R^{N_0}f_n, \; W^u
\right) \\
& \leq & A^{N_0} \cdot dist_0 \left( R^nf, \;f_n \right) + \frac{1}{3} \;dist_0
\left(f_n,\;W^u \right)
\end{eqnarray*}
Notice,
$$dist_0(f_n,\; W^u) \leq  dist_0 (f_n, \;R^nf) + dist_0 (R^nf, \;W^u).$$
Thus there exists $K > 0$,
$$dist_0 (R^{n+N_0}f, \;W^u) \leq \frac{1}{3} \;dist_0 (R^nf,\;W^u) + K \cdot dist_0 (R^nf,\;f_n).$$
Let
$$z_n = dist_0 (R^{n \cdot N_0}f, \;W^u)$$
and
$$\delta_n = dist_0(R^nf, \;f_n).$$
Then $$z_{n+1} \leq \frac{1}{3} z_n + K \cdot \delta_{n \cdot N_0}.$$
This implies
$$z_n \leq \sum_{j < n} K \cdot \delta_{j \cdot N_0} \cdot (\frac{1}{3})^{n-j}.$$
Now we use that $\delta_n \to 0$, see Proposition $\ref{rnf_close_fn}$, to get $z_n
\to 0$. So we proved that $R^{n \cdot N_0}f
\; \text{converges to}\; W^u.$ Use $(\ref{distfg})$ and $R(W^u) \subset W^u$ to get that
 $R^nf$ converges to $W^u$ in $C^{0}$ sense. Notice that any limit
of $R^nf$ is infinitely renormalizable. The only infinitely renormalizable map  in
$W^u$ is the fixed point $f_{*}^{\omega}.$
Thus
$$\lim_{n \to \infty} dist_0 \left( R^nf, \; f_{*}^{\omega} \right) = 0.$$
\end{proof}

\section{Slow convergence}

\begin{thm}
Let $d_n > 0$ be any sequence with $d_n \to 0$. There exists an infinitely renormalizable $C^2$ map $f$ with quadratic tip
such that
$$dist_0 \left( R^nf, f^{\omega}_{*} \right)  \geq d_n.$$
\end{thm}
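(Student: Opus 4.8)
The plan is to build a $C^2$ unimodal map $f$ whose scaling data approach those of the analytic fixed point $f_*^\omega$, but so slowly that the renormalizations $R^nf$ stay at least $d_n$ away from $f_*^\omega$. The starting point is the decomposition philosophy already exploited in $\S\ref{approx qudratic}$ and $\S$ ``Approximation of $R^nf$ by a polynomial map'': the obstruction to convergence is carried by the non-linearities $\eta_{\phi_j^n}$, and by Proposition~\ref{rnf_close_fn} we have $|R^nf-f_n|_1\to 0$ whenever $f\in C^{2+|\cdot|}$. For a map that is only $C^2$ (not $C^{2+|\cdot|}$) the quantity $Z_n=\sum_j \sup_{I_j^n}\frac{|\delta(x)|}{|x-c|}|I_j^n|$ — which is a Riemann sum for $\int_{\Lambda_n}\frac{|\delta|}{|x-c|}$ — need not tend to $0$; it may stay bounded below. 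So the first step is to choose the perturbation $\varepsilon$ (equivalently $\delta$, equivalently $\beta$) of $D^2f$ at the critical point so that $f$ is genuinely $C^2$ but $\hat\beta$ fails to be continuous, with $Z_n$ decaying at a prescribed slow rate, slower than any given $d_n$ forces.

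The construction I would carry out is as follows. First, fix the analytic fixed point $f_*^\omega$ and its a priori bounds, so that the geometry of $\Lambda_n(f_*^\omega)$ is controlled; a $C^2$ perturbation of $f_*^\omega$ with the same quadratic tip will have comparable bounds by continuity, at least for the finitely many scales that matter at each stage. Second, choose a sequence $\epsilon_n\downarrow 0$ with $\epsilon_n$ tending to $0$ arbitrarily slowly relative to $d_n$ (e.g. so that $\epsilon_n\gg d_n$, using only $d_n\to 0$). Third, design $\delta$ supported near the critical point in ``shells'' matched to the dynamical scales: on the collection of intervals $I_j^n$ that is visited at renormalization depth $n$, arrange $\sup_{I_j^n}\frac{|\delta|}{|x-c|}\asymp \epsilon_n$, so that $Z_n\asymp \epsilon_n\cdot|\Lambda_n|\cdot(\text{multiplicity})$, which one keeps of order $\epsilon_n$ by distributing mass correctly across the $2^n$ pieces; one checks $\varepsilon=\delta+\beta$ is continuous with $\varepsilon(c)=0$, so $f$ is $C^2$, and that $D^2f(c)\ne0$, so the tip is quadratic. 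Fourth, run the argument of $\S$ ``Approximation of $R^nf$ by a polynomial map'': the Sandwich Lemma~\ref{sandwich} gives $|R^nf-f_n|_1\le B\sum_j|\phi_j^n|\asymp B\cdot Z_n\asymp \epsilon_n$, and — crucially — a matching lower bound, since the single worst composition factor $\phi_{j_0}^n$ with $|\phi_{j_0}^n|\asymp \epsilon_n/2^n$ concatenated into the polynomial $f_n$ produces a genuine $C^1$ (hence $C^0$) discrepancy of order $\epsilon_n$ in $R^nf$ that cannot be cancelled. Combined with $\mathrm{dist}_0(f_n,f_*^\omega)\le \mathrm{dist}_0(f_n,R^nf)+\mathrm{dist}_0(R^nf,f_*^\omega)$ and the fact that $f_n$ is a polynomial necessarily at definite distance from the analytic fixed point unless its coefficients match, one gets $\mathrm{dist}_0(R^nf,f_*^\omega)\gtrsim \epsilon_n\ge d_n$ for all large $n$; the finitely many remaining $n$ are handled by adjusting the perturbation on the first few scales.

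The main obstacle I expect is the \emph{lower} bound $\mathrm{dist}_0(R^nf,f_*^\omega)\ge d_n$: the upper estimates in $\S\ref{approx qudratic}$--$\S$10 are one-sided, and one must show that the non-linearity one deliberately injects does not get washed out by the contraction in $W^u$ that drives the $C^{2+|\cdot|}$ convergence theorem. The way around this is to keep careful track of where the ``bad'' factor $\phi_{j_0}^n$ sits in the composition defining $R^nf$: choosing $j_0$ so that the tail $q_{2^n-1}^n\circ\cdots\circ q_{j_0}^n$ after it is short and uniformly non-degenerate, Lemma~10.2 of \cite{M} propagates the perturbation $|\phi_{j_0}^n|$ to a perturbation of the same order in $R^nf$ itself, not merely an upper bound. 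One then only needs that $f_*^\omega$, being analytic, is at definite $C^0$-distance from any unimodal map carrying a non-linearity spike of size $\epsilon_n$ at a fixed relative scale — which follows because $f_*^\omega$ has a fixed modulus of smoothness. Verifying that $\varepsilon$ remains continuous while $\hat\beta$ diverges, and that the a priori bounds survive the perturbation uniformly in $n$, is routine but must be done with the scale-matched shell construction in hand.
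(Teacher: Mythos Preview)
Your proposal has a genuine gap, and the paper's argument uses a quite different and much cleaner idea that you are missing.

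Your strategy is to perturb $D^2f$ on the dynamical intervals $I_j^n$ so that the Riemann sums $Z_n$ decay as slowly as you like, and then to extract a \emph{lower} bound on $\mathrm{dist}_0(R^nf,f_*^\omega)$ from the Sandwich machinery. There are two problems. First, the Sandwich Lemma and the estimates of \S\ref{approx qudratic} are one-sided; your claim of a ``matching lower bound'' from a single bad factor $\phi_{j_0}^n$ requires ruling out cancellation against the $2^n-1$ other factors, and nothing in the paper's toolkit does that. Second, even if you had $|R^nf-f_n|_0\asymp \epsilon_n$, the triangle inequality does not give $|R^nf-f_*^\omega|_0\gtrsim \epsilon_n$ without independent control of $|f_n-f_*^\omega|_0$; your remark that ``$f_n$ is a polynomial necessarily at definite distance from $f_*^\omega$ unless its coefficients match'' is not correct, since polynomials approximate the analytic map $f_*^\omega$ arbitrarily well. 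A further difficulty is that perturbing on the intervals $I_j^n$ moves the critical orbit itself, so you would also have to re-establish infinite renormalizability and a priori bounds for the perturbed map; the $I_j^n$ of different levels nest, so one cannot prescribe $\delta$ independently level by level.

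The paper avoids all of this with one idea: perturb \emph{only in the gaps} of the Cantor set of $f_*^\omega$. Writing $f_*^\omega=\phi\circ q_c$, one inserts into $\phi$ a bump $\phi_{t_n}$ supported on $\hat G_n=q_c(G_n)$, where $G_n=I_0^n\setminus(I_0^{n+1}\cup I_{2^n}^{n+1})$. Since the gaps are disjoint from the orbit of $c$, the perturbed map $f=\tilde\phi\circ q_c$ has \emph{exactly the same} critical orbit and invariant Cantor set as $f_*^\omega$; in particular it is infinitely renormalizable with the same combinatorial and metric structure. Because $f_*^\omega$ is a renormalization fixed point, the $n$-th level gap $G_n$ is rescaled precisely onto the fixed gap $G_1$ under $R^n$, so for $x\in G_1$ one has the explicit formula
\[
R^nf(x)=\phi\circ 1_1\circ \phi_{t_n}\circ 1_1^{-1}\circ q_c(x),
\]
from which $|R^nf-f_*^\omega|_0\ge m\,|\hat G_1|\,|\phi_{t_n}-\mathrm{id}|_0\ge d_n$ follows by a direct computation, choosing $t_n$ proportional to $d_n$. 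The verification that $f$ is $C^2$ uses only $t_n\to 0$ and the quadratic scaling $|\hat G_n|\asymp |G_n|^2$. No lower-bound version of the Sandwich Lemma, no tracking of bad factors, and no re-proof of a priori bounds is needed.
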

The proof needs some preparation. Use the representation
$$f_{*}^{\omega} =  \phi \circ q_c$$
where $\phi$ is an analytic diffeomorphism. The renormalization domains are denoted by $I_0^n$ with
$$c = \cap_{n \geq 1} I_0^n.$$
Each $I_0^n$ contains two intervals of the $(n+1)^{th}$ generation. Namely $I_0^{n+1}$ and $I_{2^n}^{n+1}.$
Let $$G_n = I_0^n \setminus \left( I_0^{n+1} \cup I_{2^n}^{n+1} \right),$$
$$\hat{G}_n = q_c(G_n) \subset \hat{I}_0^n = q_c(I_0^n)$$
and $\hat{I}_{2^n}^{n+1}  = q_c(I_{2^n}^{n+1}).$ The invariant Cantor set of $f_{*}^{\omega}$ is denoted by $\Lambda$.
Notice,
$$q_c(\Lambda) \cap \hat{I}_0^n \subset \left( \hat{I}_0^{n+1} \cup \hat{I}_{2^n}^{n+1} \right).$$
The gap $\hat{G}_n$ in $\hat{I}_0^n$ does not intersect with $\Lambda$. Choose a family of $C^2$ diffeomorphisms
$$\phi_t : [0, 1] \rightarrow [0, 1]$$ with
\begin{enumerate}
\item [(i)] $D\phi_t(0) = D\phi_t(1) = 1.$
\item [(ii)] $D^2\phi_t(0) = D^2\phi(1) = 0.$
\item [(iii)] For some $C_1 > 0$
$$dist_0 \;(\phi_t, id)  \geq C_1 \cdot t.$$
\item[(iv)] For some $C_2 > 0$
$$|\eta_{\phi_t}|_0 \leq C_2 \cdot t.$$
\end{enumerate}
Let $m = min \; D\phi$ and $t_n = \frac{1}{m \;C_1 \; |\hat{G}_1|} d_n$. Now we will introduce a perturbation
$\tilde{\phi}$ of
$\phi$. Let
$$1_n : [0, 1] \rightarrow \hat{G}_n$$
be the affine orientation preserving homeomorphism. Define
$$\psi: [0, 1] \rightarrow [0, 1]$$ as follows
\begin{displaymath}
\psi(x) = \left\{ \begin{array}{ll}
x & x \notin \cup_{n \geq 0} \hat{G}_n \\
1_n \circ \phi_{t_n} \circ 1_n^{-1}(x)  & x \in \hat{G}_n.
\end{array} \right.
\end{displaymath}
 Let $$f = \phi \circ \psi \circ q_c = \tilde{\phi} \circ q_c.$$
Then $f$ is unimodal map with quadratic tip which is infinitely renormalizable and still has $\Lambda$ as its
invariant Cantor set. This follows from the fact that the perturbation did not affect the critical orbit and it is
located in the complement of the Cantor set. In particular the invariant
Cantor set of $R^nf$ is again $\Lambda \subset I_0^1 \cup I_1^1$ and $G_1$ is the gap of $R^nf$. Notice, by using that
$f_{*}^{\omega}$ is the fixed point of renormalization that for $x \in G_1$
$$R^nf(x) = \phi \circ 1_1 \circ \phi_{t_n} \circ 1_1^{-1} \circ q_c(x)$$
Hence,
\begin{eqnarray*}
|R^nf - f_{*}^{\omega}|_{0} & \geq & \max_{x \in \hat{G}_1} |R^nf(x) - f_{*}^{\omega}(x)| \\
 & \geq & \max_{x \in \hat{G}_1} \; m \cdot |\left( 1_1 \circ \phi_{t_n} \circ 1_1^{-1} \right) q_c(x) -q_c(x)|\\
& \geq & m \cdot \max_{x \in \hat{G}_1} |\left( 1_1 \circ \phi_{t_n} \circ 1_1^{-1} \right) (x) -x|\\
& = & m \cdot |\hat{G}_1| \cdot |\phi_{t_n}-id|_0 \\
& \geq & m \cdot |\hat{G}_1| \cdot C_1 \cdot t_n = d_n.
\end{eqnarray*}
It remains to prove that $f$ is $C^2$. The map $f$ is $C^2$ on $[0, 1] \setminus \left \{c \right \}$ because
$f = \tilde{\phi} \circ q_c$
with $\tilde{\phi} = \phi \circ \psi.$ Where $\phi$ is analytic diffeomorphism and $\psi$ is by construction $C^2$ on
$[0, 1)$.
Notice that, from $(\ref{non-lin-bd-eqn2})$ we have,
\begin{eqnarray}
D^2f(x) & = & 4 \cdot \frac{(x-c)^2}{(1-c)^4} \cdot D^2\tilde{\phi} \left( q_c(x) \right) \label{eqn-in-slow} \\
& - & 2 \cdot \frac{1}{(1-c)^2} \cdot D\tilde{\phi} \left( q_c(x) \right). \nonumber
\end{eqnarray}
We will analyze the above two terms separately. Observe
\begin{displaymath}
D\psi(x) = \left\{ \begin{array}{ll}
1, & x \notin \cup_{n \geq 0} \hat{G}_n \\
|D\phi_{t_n} \left( 1_n^{-1} (x) \right) |,  & x \in \hat{G}_n.
\end{array} \right.
\end{displaymath}
This implies for $x \in G_n$
\begin{eqnarray*}
D\tilde{\phi} \left( q_c(x) \right) & = &  D\phi \left( \psi \circ q_c \right) \cdot D\psi (q_c(x))\\
& = & D\phi(1) \cdot \left(1+O(\hat{I}_0^n) \right) \cdot (1+O (t_n))
\end{eqnarray*}
For $x \notin \cup_{n \geq 1} G_n$ we have
$$D\tilde{\phi}(q_c(x)) = D\phi (q_c(x))$$
This implies that the term
$$x \longmapsto -2 \cdot \frac{1}{(1-c)^2} \cdot D\tilde{\phi}(q_c(x))$$
extends continuously to the whole domain. The first term in $(\ref{eqn-in-slow})$ needs more care. Observe, for $u \in
\hat{G}_n$,
\begin{eqnarray*}
D^2\tilde{\phi}(u) & = & D^2 \phi (\psi(u)) \cdot (D\psi(u))^2 + D\phi(\psi(u)) \cdot 
D^2\psi(u) \\
& = & D^2 \phi(1) \cdot \left(1+O(\hat{I}_0^n) \right) \cdot (1+O(t_n)) + \\ 
&  & D\phi(1) \cdot \left(1+O(\hat{I}_0^n) \right) \cdot (1+O(t_n)) \cdot D^2 \psi(u) \\
& = & D^2\phi(1) \cdot \left(1+O(\hat{I}_0^n) \right) \cdot (1+O(t_n)) + \\
&  & D\phi(1) \cdot \left(1+O(\hat{I}_0^n) \right) \cdot (1+O(t_n)) \cdot 
\frac{1}{|\hat{G}_n|} \cdot O(t_n).
\end{eqnarray*}
This implies that
\begin{displaymath}
4 \; \frac{(x-c)^2} {(1-c)^4} \cdot D^2 \tilde{\phi} (q_c(x))  = \left\{ \begin{array}{ll}
O \left( (x-c)^2 \right) +O(t_n), & x \in \hat{G}_n  \\
O \left( (x-c)^2 \right), & x \notin \cup_{n \geq 0} \hat{G}_n
\end{array} \right.
\end{displaymath}
In particular, the first term of $D^2f$
$$x \longmapsto 4 \; \frac{(x-c)^2} {(1-c)^4} \cdot D^2 \tilde{\phi} (q_c(x))$$
also extends to a continuous function on $[0, 1].$ Indeed, $f$ is $C^2$.

\begin{rem}\label{finalrem}
If the sequence $d_n$ is not summable (and in particular not exponential decaying) then the example
constructed above is not $C^{2+|\cdot|}$. This follows from
$$
\int_{\hat{G}_n}|\eta_{\tilde{\phi}}(x)| dx \asymp t_n.
$$
Thus
$$
\int |\eta_{\tilde{\phi}}|  \asymp \sum d_n=\infty.
$$
Now, equation \ref{inteta} implies that $f$ is not $C^{2+|\cdot|}$.
However, this construction show that in the space of
$C^{2+|\cdot|}$ unimodal maps there are examples whose renormalizations
converges only polynomially. The renormalization fixed point  is
not hyperbolic in the space of $C^{2+|\cdot|}$ unimodal maps.
\end{rem}

\end{document}